\documentclass[10pt]{amsart}
\usepackage{amsmath,amssymb,amscd,amsthm,amsfonts,amstext,amsbsy,mathrsfs,hyperref,upgreek,mathtools,stmaryrd,enumitem,bbm, dsfont, turnstile, graphicx, color, verbatim}

\usepackage
[a4paper, margin=1.2in]{geometry}

\hypersetup{colorlinks=true}

\newcommand\V{\mathsf{V}}

\newcommand{\RR}{\mathbb{R}}

\newcommand{\PP}{\mathbb{P}}
\newcommand{\QQ}{\mathbb{Q}}

\newcommand{\sa}{\mathbb{S}}

\newcommand{\lin}[3]{{\text{span}}\{#1_{#2}: #2<#3\}}
\newcommand{\Span}[1]{{\text{span}}(#1)}

\newcommand{\seq}[2]{\langle #1 \mid #2 \rangle}

\newcommand{\bij}{\longleftrightarrow}


\newcommand{\w}{\omega}


\newcommand{\centm}[1]{\begin{center}#1\end{center}}

\newcommand{\supp}[1]{{{\rm{supp}}(#1)}}

\newcommand{\pow}{\mathscr{P}}

\newcommand{\Spl}{\operatorname{split}}

\newcommand{\stem}{\operatorname{stem}}
\newcommand{\Lev}[2]{\operatorname{Lev}_{#1}{(#2)}}

\newcommand{\res}[2]{#1\!\upharpoonright\!{#2}}

\newcommand{\lh}{\operatorname{lh}}

\newcommand{\one}{\mathds{1}}

\newcommand{\card}[1]{|#1|}



\newcommand{\ZFC}{{\sf ZFC}}
\newcommand{\ZF}{{\sf ZF}}
\newcommand{\AC}{{\sf AC}}
\newcommand{\DC}{{\sf DC}}

\newcommand{\OR}{{\sf OR}}
\newcommand{\CH}{{\sf CH}}



\newcommand{\VV}{{V}}

\newcommand{\LL}{L}


\newtheorem{theorem}{Theorem}[section]
\newtheorem{lemma}[theorem]{Lemma}
\newtheorem{corollary}[theorem]{Corollary}
\newtheorem{proposition}[theorem]{Proposition}

\newtheorem{claim}{Claim}
\newtheorem*{claim*}{Claim}
\newtheorem*{subclaim*}{Subclaim}

\theoremstyle{definition}
\newtheorem{definition}[theorem]{Definition}
\newtheorem*{notation}{Notation}

\theoremstyle{remark}
\newtheorem{remark}[theorem]{Remark}

\newenvironment{enumerate-(1s)}{\begin{enumerate}[label={\upshape (\arabic{*}*)}]}{\end{enumerate}}
\newenvironment{enumerate-(is)}{\begin{enumerate}[label={\upshape (\roman{*})*}]}{\end{enumerate}}
\newenvironment{enumerate-(iss)}{\begin{enumerate}[label={\upshape (\roman{*}**)}]}{\end{enumerate}}

\newenvironment{enumerate-(a)}{\begin{enumerate}[label={\upshape (\alph*)}]}{\end{enumerate}}

\newenvironment{enumerate-(5a)}{\begin{enumerate}[label={\upshape (6-\alph*)}]}{\end{enumerate}}

\newenvironment{enumerate-(a)-r}{\begin{enumerate}[label={\upshape (\alph*)}, resume]}{\end{enumerate}}

\newenvironment{enumerate-(A)}{\begin{enumerate}[label={\upshape (\Alph*)}]}{\end{enumerate}}

\newenvironment{enumerate-(A)-r}{\begin{enumerate}[label={\upshape (\Alph*)}, resume]}{\end{enumerate}}

\newenvironment{enumerate-(i)}{\begin{enumerate}[label={\upshape (\roman*)}]}{\end{enumerate}}

\newenvironment{enumerate-(i)-r}{\begin{enumerate}[label={\upshape (\roman*)},resume]}{\end{enumerate}}

\newenvironment{enumerate-(I)}{\begin{enumerate}[label={\upshape (\Roman*)},]}{\end{enumerate}}

\newenvironment{enumerate-(I)-r}{\begin{enumerate}[label={\upshape (\Roman*)},resume]}{\end{enumerate}}

\newenvironment{enumerate-(1)}{\begin{enumerate}[label={\upshape (\arabic*)}]}{\end{enumerate}}

\newenvironment{enumerate-(1)-r}{\begin{enumerate}[label={\upshape (\arabic*)},resume]}{\end{enumerate}}



\definecolor{teal2}{rgb}{0.036, 0.512, 0.512}

\usepackage{hyperref}
\hypersetup{colorlinks=true}
\hypersetup{citecolor=teal2,linkcolor=teal2}

\begin{document}
\nocite{*} 

\bibliographystyle{acm}



\author{J\"org Brendle}
\address{J\"org Brendle, The Graduate School of Science and Technology, Kobe University, 
Rokko-dai 1-1, Nada Kobe 657-8501, Japan}
\email{brendle@kobe-u.ac.jp}
\author{Fabiana Castiblanco}\thanks{The second and third authors gratefully
acknowledge support from the SFB 878 ``\emph{Groups, Geometry \& Actions},'' a grant by the DFG (Deutsche Forschungsgemeinschaft). The third author also thanks Vladimir Kanovei for his valuable comments on earlier drafts of this paper.}

\address{Fabiana Castiblanco, Institut f\"ur Matematische Logik und Grundlagenforschung, Universit\"at M\"unster, 
Einsteinstra{\ss}e 62, 48149 M\"unster, Germany}
\email{fabi.cast@wwu.de}

\author{Ralf Schindler}
\address{Ralf Schindler, Institut f\"ur Matematische Logik und Grundlagenforschung, Universit\"at M\"unster, 
Einsteinstra{\ss}e 62, 48149 M\"unster, Germany}
\email{rds@wwu.de}

\author{Liuzhen Wu}
\address{Liuzhen Wu, Institute of Mathematics, Chinese Academy of Sciences, East Zhong Guan Cun Road N0. 55, Beijing 100190, P.R. of China}
\email{lzwu@math.ac.cn}

\author{Liang Yu}\thanks{The fifth author was partially supported by NSF of China No.No. 11671196.}
\address{Liang Yu, Institute of Mathematical Sciences, Nanjing University, Nanjing Jiangsu Province 210093, P.R. of China}
\email{yuliang.nju@gmail.com}


\title{A model with everything except for a well-ordering of the reals}

\begin{abstract} 
We construct a model of $\ZF+\DC$ containing a Luzin set, a Sierpi\'{n}ski set, as well as a Burstin basis but in which there is no a well ordering of the continuum.   \end{abstract} 

\maketitle

\setcounter{tocdepth}{1}

\section{Introduction} 

In this paper we study subsets of the real line $\RR$ with specific properties  whose classic constructions were performed by assuming various forms of the Axiom of Choice (AC).\:  The first \emph{pathological}\: set was constructed by F. Bernstein in 1908 (cf.\ \cite{bern}); he constructed a set $B\subset \mathbb{R}$ of cardinality the continuum  such that neither $B$ nor $\mathbb{R}\setminus B$ contains a perfect subset of reals.  Such a set can be obtained by assuming the existence of a well-ordering of $\mathbb{R}$.   Later in 1914, Luzin constructed an uncountable set  $\Lambda\subset \RR$ having countable intersection with every meager set (cf.\ \cite{luz}).  His construction required the continuum hypothesis (\textsf{CH}, in the strong form according to which $\RR$ may be well-ordered
in order type $\omega_1$).  In 1924, Sierpi\'nski developed a similar construction to the one given by Luzin; under the assumption of the same form of \textsf{CH}, he constructed an uncountable set $S\subset \RR$ having countable intersection with every measure zero set (cf.\ \cite{sier}).   

However $\textsf{CH}$ is not a necessary assumption for the existence of Luzin and Sierpi\'{n}ski sets (see  \cite{misp}). 
Moreover a Luzin set may exist in a model in which the set of reals is not well-ordered.  In fact, D. Pincus and K. Prikry \cite{pinc}  proved that in the Cohen-Halpern-L\'evy model $H$, a model in which the reals cannot be well-ordered (in fact, in $H$ there is an uncountable set of reals with no countable subset),  there is a Luzin set as well as a Vitali set.    Additionally, Pincus and Prikry asked whether a Hamel basis, i.e., a basis for $\RR$ construed as a vector space over the field of rational numbers $\mathbb{Q}$,  exists in $H$ or, in general, if the existence of a Hamel basis is compatible with the non-existence of a well-ordering of the reals.  Recently, M. Beriashvili, R. Schindler, L. Wu and L. Yu (cf. \cite{hamel2}) answered this question in the affirmative, by showing that in $H$ there is a Hamel basis and, furthermore, in $H$ there is also a Bernstein set (see \cite[Theorems 1.7 and 2.1]{hamel2}).  Thus the model $H$ has many pathological sets of reals, but in $H$ the continuum cannot be well ordered.   There is no Sierpi\'{n}ski set in $H$, though (see \cite[Lemma 1.6]{hamel2}). 

Let us informally refer to a model $M$ as a ``Solovay model'' iff 
$M$ is obtained via a symmetric collapse over a model in which 
what is to become $\omega_1^M$ is either inaccessible or a limit
of large cardinals (e.g., Woodin cardinals).
The paper \cite{carlos_stevo} shows that if $U$ is a selective ultrafilter
on $\omega$ which was added by forcing over a Solovay model $M$, then 
$M[U]$ satisfies the Open Coloring Axiom (see \cite[p.\ 247]{carlos_stevo}),
hence $M[U]$ inherits from $M$ the property that every uncountable set
of reals that a perfect subset and in particular 
$M[U]$ does not contain a well--ordering of the 
reals, see \cite[Theorem 5.1]{carlos_stevo}. 

The paper \cite{larson_zapletal}
further explores this topic and studies which consequences of having a well--ordering
of ${\mathbb R}$ remain false when adding certain ultrafilters on $\omega$
over a Solovay model or when adding a Vitali set. Also,
\cite{larson_zapletal} produces a model of {\sf ZF} plus 
{\sf DC} plus ``there is a Hamel basis'' plus ``there is no well--ordering of the
reals.'' The verification in \cite{larson_zapletal}
that the extension of the Solovay model via forcing with countable
linearly independent sets of reals (called ${\mathbb Q}_H$ in the current paper, see 
Definition \ref{defn_hamel_basis_forcing_2} below) doesn't
have a well--ordering of its reals uses 
large cardinals, specifically 
Woodin's stationary 
tower forcing. The forcing ${\mathbb Q}_H$ used by \cite{larson_zapletal} does not work in the absence of 
large cardinals, though, see Corollary \ref{key_corollary} below. 

The current paper improves the result obtained in \cite{hamel2} by showing that there is a model $W$ of $\ZF+\DC$ such that in $W$ the reals cannot be well-ordered and $W$ contains Luzin as well as Sierpi\'{n}ski sets and also a Burstin basis, i.e., a set which is simultaneously a Hamel basis and a Bernstein set.  Notice that from the existence of a Hamel basis one can derive that in $W$ there is also a Vitali set (see \cite[Lemma 1.1]{hamel2}).   

\section{Basic definitions and results}

\subsection{Pathological sets within \ZFC}
\begin{definition}\label{defs}Let $A\subseteq\RR$ uncountable.  We say that $A$ is

\begin{enumerate-(i)}\item a \index{special sets!Vitali}\emph{Vitali set} if $A$ is the range of a selector for the equivalence relation $\sim_\QQ$ defined over $\RR\times \RR$ by $x\sim_\QQ y\iff x-y\in\QQ$;
\item a \emph{Sierpi\'{n}ski set}\index{special sets!Sierpi\'{n}ski} if for every $N\in \mathcal{N}$ -the ideal of null-sets with respect to Lebesgue measure over $\RR$- we have $\card{A\cap N}\leq\w$; 
\item a \emph{Luzin set}\index{special sets!Luzin} if for every $M\in\mathcal{M}$ -the ideal of the Borel meager sets- we have $\card{A\cap M}\leq\w$;
\item a \emph{Berstein set}\index{special sets!Bernstein} if for every perfect set $P\subseteq \RR$ we have $A\cap P\neq \varnothing \neq (\RR\smallsetminus A)\cap P$;
\item a \emph{Hamel basis}\index{special sets!Hamel basis} if $A$ is a maximal linearly independent subset of $\RR$ when we consider it as a vector space over $\QQ$.
\item a \emph{Burstin basis}\index{special sets!Burstin} if $A$ is a Hamel basis which has nonempty intersection with every perfect set.
\end{enumerate-(i)}
\end{definition}

The existence of a Hamel basis in a model of $\ZF+\DC$ implies the existence of nonmeasurable sets and the existence of sets without the Baire property. In particular, we have the next result connecting  Hamel bases and Vitali sets.  For a proof, see \cite[Lemma 1.1]{hamel2}.

\begin{lemma}\label{vi} {\bf (Folklore)} Suppose $V\models \ZF$ and suppose that a Hamel basis $H$ exists.  Then there is a Vitali set.\end{lemma}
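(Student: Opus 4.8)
The plan is to use the Hamel basis $H$ to manufacture an explicit selector for the equivalence relation $\sim_{\QQ}$, so that the construction requires no further use of choice beyond what is needed to have $H$ itself. First I would fix some $h_0 \in H$ and consider, for each real $x$, its unique representation $x = q_0 h_0 + \sum_{i=1}^{n} q_i h_i$ as a finite $\QQ$-linear combination of distinct basis elements, where we may assume $h_0$ does or does not occur and record its coefficient $q_0 \in \QQ$ (possibly $0$). Then $x \sim_{\QQ} y$ precisely when $x$ and $y$ have the same representation except possibly for the coefficient of $h_0$, i.e.\ when $x - y \in \QQ$ forces $x - y = (q_0^x - q_0^y) h_0$... wait, that is only true if $h_0$ is itself rational, which we cannot assume. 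So instead I would argue as follows.

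The cleaner route: pick $h_0 \in H$, and note that since $H$ is a basis, $1 \in \RR$ has a representation $1 = \sum_{i} q_i h_i$ with all $q_i \in \QQ$ and at least one $q_i \neq 0$; fix one such basis element, call it again $h_0$, appearing with nonzero rational coefficient in the expansion of $1$. Now for an arbitrary real $x$, write its (unique, finite) expansion in the basis $H$ and let $c(x) \in \QQ$ denote the coefficient of $h_0$. The key observation is that if $x - y \in \QQ$, then $x - y = (x-y)\cdot 1 = (x-y)\sum_i q_i h_i$ is the basis expansion of the rational $x-y$, so the coefficient of $h_0$ in $x - y$ is $(x-y)q_0$; by uniqueness of expansions and linearity this equals $c(x) - c(y)$. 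Hence $x - y = (c(x)-c(y)) q_0^{-1}$, so the rational difference $x-y$ is \emph{determined} by $c(x) - c(y)$. Therefore within each $\sim_{\QQ}$-class the map $x \mapsto c(x)$ is injective with range a coset of a subgroup of $\QQ$ (in fact of $q_0\QQ = \QQ$, or at least an affine copy), so each class carries a canonical $\ZZ$- or $\QQ$-translate structure, and I can define the selector by choosing from each class the unique element $x$ with $c(x) \in [0, |q_0^{-1}|)$, i.e.\ the representative whose rational translate lands in a fixed half-open fundamental domain. This definition is made uniformly from the single parameter $h_0$ (and $H$), so it is a legitimate class function in $\ZF$, and its range is a Vitali set.

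Finally I would verify the two defining properties of a Vitali set: that every $\sim_{\QQ}$-class meets the selected set (existence of a representative with $c(x)$ in the fundamental domain, which holds because adding rationals to $x$ shifts $c(x)$ through all of $q_0\QQ + c(x)$, a dense... no, rather a full coset, so some translate lands in $[0,|q_0^{-1}|)$) and that no class meets it twice (immediate from the injectivity of $c$ on each class established above). The main obstacle, and the only subtle point, is precisely the computation that the coefficient function $c$ detects rational differences — i.e.\ relating the basis expansion of $x-y$ to those of $x$ and $y$; everything else is bookkeeping. Since this is folklore, I would simply cite \cite[Lemma 1.1]{hamel2} rather than writing out these details, but the argument above is the content.
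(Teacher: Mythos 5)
Your central computation is correct and is indeed the heart of the folklore argument: fixing $h_0\in H$ occurring with nonzero coefficient $q_0$ in the expansion $1=\sum_i q_i h_i$, and letting $c(x)\in\QQ$ be the coefficient of $h_0$ in the $H$-expansion of $x$, uniqueness of expansions gives that $x-y\in\QQ$ implies $x-y=(c(x)-c(y))/q_0$. But your selection rule then fails. Within a single $\sim_\QQ$-class we have $c(x+q)=c(x)+q\,q_0$ for all $q\in\QQ$, so $c$ maps each class \emph{onto} all of $\QQ$ (the coset $c(x)+q_0\QQ$ is just $\QQ$, since $c(x)\in\QQ$ and $q_0\neq 0$). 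Hence ``the unique element with $c(x)\in[0,|q_0^{-1}|)$'' does not exist: every rational in that interval is the $c$-value of some member of the class, so infinitely many members land in it. Injectivity of $c$ on the class only says those members are pairwise distinct, not that there is just one; the dense (indeed full) coset you noticed and then waved away is exactly the obstruction, since $\QQ$ admits no half-open interval as a fundamental domain for translation by $\QQ$.

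The repair is to select by the equation $c(x)=0$ rather than by membership in an interval: given $x$, the element $y=x-c(x)/q_0$ satisfies $y\sim_\QQ x$ and $c(y)=0$; and if $c(x)=c(y)=0$ with $x\sim_\QQ y$, then $x-y=(c(x)-c(y))/q_0=0$. Thus $\{x\in\RR : c(x)=0\}$, which is precisely the $\QQ$-span of $H\setminus\{h_0\}$, meets every $\sim_\QQ$-class in exactly one point, is uncountable, and is defined from $H$ and $h_0$ alone, so no further choice is used. With this change your argument becomes exactly the folklore proof the paper points to via \cite[Lemma 1.1]{hamel2} (equivalently: exchange $h_0$ for $1$ to obtain a Hamel basis containing $1$ and take the span of the remaining basis vectors).
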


\begin{lemma} {\bf (Luzin, 1914, and Sierpi\'{n}ski,1924)} Assume $\VV$ is a model of $\ZFC+ \CH$.  Then, there are $\Lambda$ and $S$ in $\VV$ such that $\Lambda$ is a Lusin set and $S$ is a Sierpi\`{n}ski set. \end{lemma}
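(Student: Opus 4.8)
The plan is to construct both sets simultaneously by a transfinite recursion of length $\omega_1$, using the fact that under $\CH$ there are exactly $\aleph_1$ Borel meager sets and exactly $\aleph_1$ Borel null sets, and that $\RR$ itself may be enumerated as $\seqs{r_\alpha \mid \alpha < \omega_1}$. First I would fix, using a well-ordering of $\RR$ in order type $\omega_1$, enumerations $\seqs{M_\alpha \mid \alpha < \omega_1}$ of a cofinal family of Borel meager sets (it suffices to enumerate the $F_\sigma$ meager sets, since every meager set is contained in one) and $\seqs{N_\alpha \mid \alpha < \omega_1}$ of a cofinal family of Borel null sets (again the $G_\delta$ null sets suffice). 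The key structural observation is that for each $\alpha < \omega_1$, the set $\bigcup_{\beta < \alpha} M_\beta$ is a countable union of meager sets, hence meager, hence its complement is comeager and in particular uncountable (indeed of size continuum); dually $\bigcup_{\beta < \alpha} N_\beta$ is null, so its complement is co-null and uncountable.

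Next I would carry out the two recursions. To build the Luzin set $\Lambda = \set{\lambda_\alpha}{\alpha < \omega_1}$: at stage $\alpha$, having chosen $\seqs{\lambda_\beta \mid \beta < \alpha}$, pick $\lambda_\alpha \in \RR \setminus \bigl( \bigcup_{\beta \leq \alpha} M_\beta \cup \set{\lambda_\beta}{\beta < \alpha} \bigr)$; this is possible because the set being removed is meager (a countable union of meager sets) together with a countable set, hence not all of $\RR$. Symmetrically, to build the Sierpi\'nski set $S = \set{s_\alpha}{\alpha < \omega_1}$: at stage $\alpha$ pick $s_\alpha \in \RR \setminus \bigl( \bigcup_{\beta \leq \alpha} N_\beta \cup \set{s_\beta}{\beta < \alpha} \bigr)$, possible because that set is null plus countable, hence not all of $\RR$. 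Both $\Lambda$ and $S$ are uncountable since all the points chosen are distinct by construction.

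Finally I would verify the defining properties. Given an arbitrary meager set $M \in \mathcal{M}$, choose $\alpha_0$ with $M \subseteq M_{\alpha_0}$ (possible since the $M_\alpha$ were chosen cofinal in the $F_\sigma$ meager sets, which are cofinal in all meager sets). For every $\alpha \geq \alpha_0$ we have $\lambda_\alpha \notin M_{\alpha_0} \supseteq M$, so $\Lambda \cap M \subseteq \set{\lambda_\beta}{\beta < \alpha_0}$, which is countable; hence $\card{\Lambda \cap M} \leq \omega$. The argument for $S$ and an arbitrary null set $N$ is identical, replacing $\mathcal{M}$ by $\mathcal{N}$ and $M_\alpha$ by $N_\alpha$. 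The only mildly delicate point, and the one I would be most careful about, is the cofinality claim — that every meager set is contained in a Borel (indeed $F_\sigma$) meager set and every null set in a Borel (indeed $G_\delta$) null set — so that $\aleph_1$-many Borel sets genuinely suffice to ``catch'' all members of the ideal; this is a standard regularity fact about the Lebesgue and Baire $\sigma$-ideals but it is the linchpin of the whole argument.
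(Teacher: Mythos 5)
Your proposal is correct and is essentially the paper's own argument: a transfinite recursion of length $\omega_1$ against an enumeration (using $\CH$) of the $F_\sigma$ meager sets for the Luzin set and of the $G_\delta$ null sets for the Sierpi\'nski set, avoiding at each stage the previously listed sets and previously chosen points. Your added verification of the cofinality of these Borel families in the respective ideals is the standard fact the paper leaves implicit.
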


\begin{proof}Let $\{N_i: i<\w_1\}$ be an enumeration of all $G_\delta$ null sets.  Recursively define $\langle x_i:i<\w_1\rangle$ such that $x_i\notin \bigcup\{N_j:j<i\}\cup\{x_j:j<i\}$.  Then, $S=\{x_i:i<\w_1\}$ is a Sierpi\'{n}ski set.

The same procedure gives us a Luzin set, starting out with an enumeration $\{M_i:i<\w_1\}$ of all $F_\sigma$-meager sets.\end{proof}

\begin{remark}As we may write $\RR=N\cup M$ where $N$ is null and $M$ is meager, no set can be both a Sierpi\'{n}ski set as well as a Lusin set.\end{remark}

The construction of a Bernstein set in $V$ is based on the enumeration of all perfect subsets of $\RR$.   We omit the proof and instead present below the construction of
a Burstin basis in $V$ under $\AC$ (see Theorem \ref{burstin}).  
 \begin{proposition} {\bf (Folklore)} Every Burstin basis is a Bernstein set.\end{proposition}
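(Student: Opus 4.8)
The plan is to observe that one half of the Bernstein condition is built into the definition of a Burstin basis, and to obtain the other half by a scaling trick that uses $\QQ$-linear independence. Let $A$ be a Burstin basis and fix a perfect set $P \subseteq \RR$. By the very definition of a Burstin basis we have $A \cap P \neq \varnothing$, so the entire content of the statement is to produce a point of $P$ that does \emph{not} lie in $A$, i.e.\ to show $(\RR \setminus A) \cap P \neq \varnothing$.

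For this I would pass to the set $2P = \{2x : x \in P\}$. Since multiplication by $2$ is a homeomorphism of $\RR$, the set $2P$ is again perfect, so applying the Burstin property of $A$ to $2P$ yields a point $2x \in A \cap 2P$ with $x \in P$. Because a Hamel basis, being $\QQ$-linearly independent, never contains $0$, we have $2x \neq 0$ and hence $x \neq 0$. Now $\{x, 2x\}$ is a $\QQ$-linearly dependent set of two distinct reals, as witnessed by $2 \cdot x + (-1) \cdot 2x = 0$; therefore the linearly independent set $A$ cannot contain both $x$ and $2x$. Since $2x \in A$, this forces $x \notin A$, and so $x \in P \setminus A$, which is exactly what we wanted. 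Together with $A \cap P \neq \varnothing$ this shows $A$ is a Bernstein set.

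There is essentially no real obstacle here; the only point that needs care is the degenerate possibility $x = 0$, which is ruled out precisely by the fact that $0$ is never an element of a Hamel basis. One could equally well replace the factor $2$ by any rational $q \notin \{0,1\}$, or by a suitable translation of $P$, since all that is used is that the transformed set is still perfect so that the Burstin property of $A$ can be invoked on it.
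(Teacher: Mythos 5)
Your proof is correct and uses the same key idea as the paper: apply the Burstin property to the perfect set $2P$ and use $\QQ$-linear independence (plus $0 \notin A$) to rule out $A$ meeting both $P$ and $2P$ at corresponding points. The only cosmetic difference is that you argue directly, producing a point of $P \setminus A$, whereas the paper argues by contradiction that $A$ contains no perfect set; the handling of the degenerate case $x=0$ is a nice touch of care on your part.
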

 
 \begin{proof}Suppose $B\subseteq \RR$ is a Burstin basis such that $P\subseteq B$ for some perfect $P\subseteq\RR$.  As $B$ is linearly independent, the set $2P=\{2p: p\in P\}$ has empty intersection with $B$.  On the other hand, $2P$ is a perfect set, so $2P\cap B\neq\varnothing$, which gives a contradiction.   It follows that $B$ is totally imperfect, so $(\RR\smallsetminus B)\cap P\neq \varnothing$ as well, i.e., $B$ is a Bernstein set.    \end{proof}

It is easy to construct a Hamel basis $H$ such that $H \cap P = \emptyset$ for some
perfect set $P$; no such $H$ can then be a Burstin basis. It is also not hard to construct
a Hamel basis $H$ which contains a perfect set (see e.g.\ \cite[Example 1, p.\ 477f.]{jones}); no such $H$ can be a Burstin basis either.
  
\begin{theorem}\label{burstin} {\bf (Burstin, 1916)} Assume $\VV\models \ZFC$.  Then there is a Burstin basis $B$.\end{theorem}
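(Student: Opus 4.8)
The statement is Burstin's classical theorem: under $\ZFC$ there is a simultaneous Hamel basis and Bernstein set. The plan is a transfinite recursion of length $2^{\aleph_0}$ in which, at each stage, one commits one more real to membership in the basis in order to meet a perfect set, while carefully preserving linear independence over $\QQ$ and keeping enough freedom to eventually span $\RR$. I would first fix, using $\AC$, an enumeration $\langle P_\alpha : \alpha < 2^{\aleph_0}\rangle$ of all perfect subsets of $\RR$, noting that there are exactly $2^{\aleph_0}$ of them (each perfect set is determined by a closed set, hence by a countable amount of data), and simultaneously an enumeration $\langle r_\alpha : \alpha < 2^{\aleph_0}\rangle$ of $\RR$ itself, so that the final set will be a maximal independent set rather than merely an independent set hitting every perfect set.

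The recursion builds an increasing chain $\langle B_\alpha : \alpha < 2^{\aleph_0}\rangle$ of $\QQ$-linearly independent sets with $\card{B_\alpha} \le \card{\alpha} + \aleph_0$. At a successor stage $\alpha + 1$ I do two things. First (the Bernstein part), I pick $x_\alpha \in P_\alpha$ not lying in the $\QQ$-linear span of $B_\alpha \cup \{x_\beta : \beta < \alpha\}$; such $x_\alpha$ exists because that span has cardinality at most $\card{\alpha} + \aleph_0 < 2^{\aleph_0}$ while $\card{P_\alpha} = 2^{\aleph_0}$, and add it to the basis under construction. Second (the Hamel part), I look at $r_\alpha$: if $r_\alpha$ is already in the span of what we have, do nothing; otherwise throw $r_\alpha$ into the set. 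Letting $B = \bigcup_{\alpha < 2^{\aleph_0}} B_\alpha$ (together with the $x_\alpha$'s, which I fold into the $B_\alpha$'s), $B$ is $\QQ$-linearly independent because independence is a property of finite subsets and every finite subset appears in some $B_\alpha$; $B$ spans $\RR$ because every $r_\alpha$ is in the span of $B_{\alpha+1}$; and $B$ meets every perfect set $P$ since $P = P_\alpha$ for some $\alpha$ and $x_\alpha \in P_\alpha \cap B$. Hence $B$ is a Hamel basis with nonempty intersection with every perfect set, i.e., a Burstin basis.

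The one point that needs genuine care — and the only real obstacle — is the bookkeeping ensuring that, at every stage $\alpha < 2^{\aleph_0}$, the span of the portion of the basis built so far is a \emph{proper} subspace of $\RR$ (equivalently, has cardinality $< 2^{\aleph_0}$), since only then can one find the required $x_\alpha$ inside $P_\alpha$ avoiding it. This follows from the cardinal arithmetic $\card{B_\alpha} \le \card{\alpha} + \aleph_0$, which is maintained because at each step only finitely many (in fact one or two) new elements are added and at limit stages one takes unions of $< 2^{\aleph_0}$ sets each of size $< 2^{\aleph_0}$; one uses here that $2^{\aleph_0}$ is a cardinal of uncountable cofinality is \emph{not} needed — regularity is not required, only that $\card\alpha \cdot \aleph_0 < 2^{\aleph_0}$ for $\alpha < 2^{\aleph_0}$, which is immediate. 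Everything else is routine: linear algebra over $\QQ$ and the fact that perfect sets have size continuum. I would present the argument as a single clean recursion, remarking at the end that Proposition (Every Burstin basis is a Bernstein set) then gives the Bernstein property for free.
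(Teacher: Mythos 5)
Your proposal is correct and takes essentially the same approach as the paper: a transfinite recursion along an enumeration $\langle P_\alpha : \alpha<2^{\aleph_0}\rangle$ of all perfect sets, choosing at stage $\alpha$ an element of $P_\alpha$ outside the $\QQ$-span of the set built so far, which is nonempty since that span has size at most $\card{\alpha}+\aleph_0<2^{\aleph_0}$ while $\card{P_\alpha}=2^{\aleph_0}$. The only (cosmetic) difference is that you interleave an enumeration of $\RR$ to secure spanning within the recursion, whereas the paper simply extends the resulting linearly independent family to a maximal one at the end.
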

 
 \begin{proof}Suppose $\{P_i:i\leq2^{\aleph_0}\}$ is a enumeration of all perfect subsets of $\RR$.  By transfinite recursion we are going to define a set $\{b_\alpha:\alpha<2^{\aleph_0}\}\subseteq\RR$ such that 
 
 \begin{enumerate-(i)}\item $b_\alpha\in P_\alpha$ for every $\alpha<2^{\aleph_0}$
 \item for every $\beta<2^{\aleph_0}$, the set $\{b_\alpha:\alpha<\beta\}$ is linearly independent
 \end{enumerate-(i)} 
 
 Suppose that $\beta<2^{\aleph_0}$ and we already have defined the collection $\{b_\alpha:\alpha<\beta\}$ satisfying (i) and (ii) above.  
 
 Consider the set $\lin{b}{\alpha}{\beta}$.  Note that $$\card{\lin{b}{\alpha}{\beta}}\leq \card{\beta}+\w<2^{\aleph_0}$$
 
Thus, $P_\beta\smallsetminus \lin{b}{\alpha}{\beta}\neq\varnothing$ and we may pick an element $b_\beta$ from this set.  

According to this procedure, we have constructed a linearly independent family $\{b_\alpha:\alpha<2^{\aleph_0}\}$ satisfying (i).  We can extend this family to a maximal one, call it $B$, and in this way, $B$ will be a Hamel basis over $\RR$.  

By construction, $B$ intersects every perfect subset of $\RR$, so $B$ is in fact a Burstin basis. 
 \end{proof}

\subsection{The Marczewski ideal and new generic reals}

Before the appearance of the forcing technique, in 1935 E. Marczewski introduced the $\sigma$-ideal $s^0$.  This ideal is related to Sacks forcing in much the same way that Cohen forcing is related with the ideal of meager subsets of $\RR$ and Random forcing is related with the ideal of Lebesgue null subsets of $\RR$.

\begin{definition} {\bf (Marczewski, 1935)} A set $X\subseteq{}^\w2$\index{Marczewski ideal $s_0$} is in $s^0$ if and only if for every perfect tree $T\subseteq{}^{<\w}2$, there is a perfect subtree $S\subseteq T$ with $[S]\cap X=\varnothing$. \end{definition}
It is easy to see that $s^0$ is an ideal which does not contain any perfect set.    Furthermore, any subset $X$ of the reals with $|X|<2^{\aleph_0}$ is in the Marczewski ideal, as well as every universal measure zero set and every perfectly meager set\footnote{A set $N^*\subseteq {}^\w2$ has universal measure zero if for every measure $\mu$ defined on the Borel sets of ${}^\w2$, there is $B$ a $\mu$-null Borel set such that $N^*\subseteq B$.  Analogously, we say that $M^*\subseteq {}^\w2$ is perfectly meager if for every perfect tree $T\subseteq{}^{<\w}2$, the set $M^*\cap [T]$ is meager relative to the topology of [T].}.  However, $s^0$ contains sets of size continuum (cf.\ \cite[Theorem 5.10]{misp}).  Moreover, by a ``fusion'' argument we can see that $s^0$ is a $\sigma$-ideal, i.e. closed under countable unions.
\begin{remark}We say that $X\subseteq {}^\w2$ is $s$-measurable\index{s@$s$-measurability} if for each  $T\in\sa$ there is $S\leq T$ such that either $[S]\cap X=\varnothing$ or $[S]\subseteq X$.  Note that the algebra of the $s$-measurable sets modulo the ideal $s^0$ corresponds, in fact, to Sacks forcing.   \end{remark}

\begin{definition}\index{countable covering for reals}Suppose that $M\subseteq N$ are models of $\ZFC$.  We say that the pair $(M,N)$ satisfies \emph{countable covering for reals} if for every $A\subseteq {}^\w2^M$, $A \in N$, such that $A$ is countable in $N$, there is a set $B\subseteq{}^\w2^M$, $B \in M$, such that $A\subseteq B$ and $B$ is countable in $M$. \end{definition}
In the 1960's, K. Prikry asked whether the existence of a non constructible real implies the existence of a perfect set of non constructible reals (cf. \cite{msurr}).   In order to find a solution to Prikry's problem,  Marcia J. Groszek and Theodore A. Slaman  have shown the following result in \cite[Theorem 2.4]{groszek}\footnote{See also \cite[Theorem 3]{vewo}}:
\begin{theorem}\label{groo}Suppose that $M\subseteq N$ are models of\: $\ZFC$ such that $(M,N)$ satisfies countable covering for reals.  Then every perfect set  $P\subseteq{}^\w2^N$ in $N$ has an element which is not in $M$. \end{theorem}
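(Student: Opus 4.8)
The plan is to argue by contradiction. Suppose $P=[T]^N\subseteq({}^\w2)^M$ with $T\in N$ a perfect tree, which we may take to be pruned (no dead ends). The statement requires that $N$ contain a real not in $M$ --- otherwise $P=[T]^N=[T]^M\subseteq M$ and there is nothing to prove; fix such a real $r\in({}^\w2)^N\setminus({}^\w2)^M$ (which is automatic once $T\notin M$, the case that carries the argument, since then any real coding $T$ witnesses it). If $T\in M$, then the canonical homeomorphism $h_T\colon({}^\w2)^N\to P$ read off from the splitting structure of $T$ lies in $M$ and is onto $P$; hence $h_T(r)\in P\subseteq({}^\w2)^M$, so $r=h_T^{-1}(h_T(r))\in M$, a contradiction. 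Thus $T\notin M$, and it suffices to derive the contrary, $T\in M$.

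Working in $N$, I would choose a countable set $A\subseteq P$ that ``regenerates'' $T$: enumerate the splitting nodes $\seqs{s_k:k<\w}$ of $T$ and, for each $k$ and $i\in 2$, pick some branch $x_{k,i}\in P$ extending $s_k{}^\frown i$; put $A=\set{x_{k,i}}{k<\w,\ i<2}$. Then $A\in N$ is countable in $N$, $A\subseteq P\subseteq({}^\w2)^M$, $A$ is dense in $P$, and $T=T_A$ where $T_A=\set{x\restriction n}{x\in A,\ n<\w}$; so $T$ is recoverable from the \emph{set} $A$ by an absolute procedure. Applying countable covering for reals to $A$ yields $B\in M$ with $A\subseteq B\subseteq({}^\w2)^M$ and $B$ countable in $M$. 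As $A\subseteq B$ we get $T=T_A\subseteq T_B$ with $T_B\in M$, and as $A$ is dense in $P$ we get $P=\overline A^N\subseteq\overline B^N=[T_B]^N$; intersecting with $({}^\w2)^M$ and using $[T_B]^N\cap({}^\w2)^M=[T_B]^M$ (valid since $T_B\in M$), we obtain $P\subseteq[T_B]^M$.

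The next step is to regularize inside $M$. Apply the Cantor--Bendixson decomposition to the closed set $[T_B]^M$, so $[T_B]^M=C\sqcup D$ with $C,D\in M$, $C$ perfect (possibly empty) and $D$ countable in $M$, hence also in $N$. Since $P$ is uncountable in $N$ while $D$ is countable in $N$, the set $P\cap C$ is uncountable; and since deleting a countable set from a perfect set leaves its closure unchanged, $P\subseteq\overline{P\cap C}^N\subseteq\overline C^N=[S]^N$, where $S\in M$ is the perfect tree with $[S]^M=C$. Hence $T\subseteq S$, $S\in M$ is perfect, and $P\subseteq[S]^N\cap({}^\w2)^M=[S]^M$.

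What remains --- and what I expect to be the main obstacle --- is to improve ``$T\subseteq S$ with $S\in M$ perfect and $P\subseteq[S]^M$'' to the equality $S=T$, which finishes, since then $T=S\in M$ contradicts $T\notin M$. This is where the difficulty genuinely lies: the cover $B$ delivered by countable covering is out of our control and may contain spurious branches (for instance, all eventually-constant sequences), so that $[T_B]$, and with it $S$, may properly extend $T$ while $M$ cannot tell which nodes of $S$ actually belong to $T$. Re-running the last two steps inside the shrinking $[S]^M$ does not help, because any perfect $S'\in M$ with $T\subseteq S'$ and $P\subseteq[S']^M$ is already a fixed point of that procedure (one may cover $A$ by an $M$-countable subset of $[S']^M$ and pad it, inside $M$, to a countable set dense in $[S']^M$, whose generated tree is $S'$ again). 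Closing this gap should instead require a genuine fusion argument: building, directly in $N$ and for arbitrary perfect $T\in N$, a branch of $T$ generic enough over $M$ to lie outside $({}^\w2)^M$, with countable covering --- together with $\w_1^M=\w_1^N$, which it implies by coding countable ordinals by reals --- supplying the density requirements. I expect this to be the technical heart of the Groszek--Slaman and Veli\v{c}kovi\'c--Woodin arguments, rather than something extractable from the covering reduction above.
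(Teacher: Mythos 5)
You are right to flag the end of your argument as the obstacle, but be clear about its status: what you have is not a proof with a hard step left to check, it is a reduction that produces no usable information, so the theorem is genuinely unproved. After covering your dense set $A$ by $B\in M$ and passing to the Cantor--Bendixson kernel, all you retain inside $M$ is some perfect tree $S\in M$ with $T\subseteq S$ and $P\subseteq [S]$; but $S={}^{<\omega}2$ already has all of these properties, so the conclusion of your reduction is no stronger than the contradiction hypothesis $P\subseteq{}^{\omega}2^{M}$ you started from, and no local patch (re-running the covering inside $[S]$, as you note) can shrink $S$ to $T$. In particular the intermediate target ``show $T\in M$'' is not something the covering hypothesis gives any handle on: a tree is not recoverable in $M$ from the fact that each of its branches separately lies in $M$. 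The ``genuine fusion argument'' you defer to is therefore the entire content of the theorem, not a finishing touch, and it is exactly what the paper supplies.

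For comparison, the paper's argument (the first round of the proof of Theorem \ref{gro}, which proves the present statement under the hypothesis -- implicit in Theorem \ref{groo}, explicit there -- that $N$ contains a real $r\notin M$) never tries to reconstruct $T$ inside $M$ and does not cover a dense subset of $P$. Call $x\in[T]$ eventually trivial if it is the leftmost or rightmost branch of some $T_s$. If some $T_s$ has no $M$-branch except eventually trivial ones, we are done, since $[T_s]$ is uncountable and the eventually trivial branches form a countable set. Otherwise choose for each $s\in T$ a non--eventually-trivial $x_s\in[T_s]\cap M$ and apply countable covering to $\{x_s : s\in T\}$ to obtain a single sequence $\vec g\in M$ listing them. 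The new real $r$ is then interleaved into a branch of $T$: one recursively builds $x,y\in[T]$ together with subsequences $\vec g^{\,x},\vec g^{\,y}$ of $\vec g$, at each stage following the current guide $x_s$ up to a suitably chosen splitting node (chosen high enough to distinguish the relevant entries of $\vec g$ from one another) and then branching according to $r(n)$ for $x$, and against the guide for $y$; the auxiliary branch $y$ is what makes the subsequences $\vec g^{\,x},\vec g^{\,y}$ recoverable from $x$, $y$ and $\vec g$. This bookkeeping yields $r\leq_T x,\vec g^{\,x}$ and $\vec g^{\,x},\vec g^{\,y}\leq_T x,y,\vec g$, hence $r\leq_T x,y,\vec g$; since $\vec g\in M$ and $r\notin M$, at least one of $x,y\in[T]=P$ is not in $M$. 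That coding construction is precisely the missing piece of your proposal (and its elaboration, building a full binary scheme of such branches, is what upgrades the conclusion to a perfect subset of $P$ disjoint from $M$ in Theorem \ref{gro}).
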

In \cite[\S1]{groszek}, the authors state without proof that the conclusion in \ref{groo} can be strengthened to: for every perfect set $P\subseteq{}^\w2^N$ in $N$
there is a perfect set $P'\subseteq P$ in $N$ such that $P'\cap M=\varnothing$, which is equivalent to saying that ${}^\w2^M\in s_0^N$ ($s_0^N$ being $s_0$ of $N$).
In what follows we present a proof of this strengthened version of \cite[Theorem 2.4]{groszek}.  

\begin{theorem} {\bf (Groszek-Slaman)} \label{gro}\index{Groszek-Slaman Theorem}  Let $W\subseteq \VV$ be an inner model such that $W\models \CH$. If \: ${}^\w2^\VV\smallsetminus {}^\w2^W\neq \varnothing$ holds, we have\centm{$V\models {}^\w2^W\in s^0$}  
\end{theorem}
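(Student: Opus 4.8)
The plan is to distinguish cases according to whether ${}^\w2^W$ is countable in $\VV$. If it is, then since every set of reals of size $<2^{\aleph_0}$ lies in $s^0$ (as remarked right after the definition of the Marczewski ideal), we get $\VV\models{}^\w2^W\in s^0$ at once, and the hypothesis that a new real exists is not even needed in this case. So assume from now on that ${}^\w2^W$ is uncountable in $\VV$. Since $W\models\CH$ provides, inside $W$, a bijection $\map{e}{\omega_1^W}{{}^\w2^W}$, this forces $\omega_1^W=\omega_1^\VV=:\omega_1$: otherwise $\omega_1^W$, and hence via $e$ also ${}^\w2^W$, would be countable in $\VV$. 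A useful preliminary observation is that then $(W,\VV)$ satisfies countable covering for reals: if $A\subseteq{}^\w2^W$ is countable in $\VV$, then $e^{-1}[A]$ is a countable subset of $\omega_1=\omega_1^\VV$, hence bounded by some $\gamma<\omega_1$, so $B:=e[\gamma]\in W$ is countable in $W$ and covers $A$. By Theorem \ref{groo} this already gives that no perfect subset of ${}^\w2^\VV$ lying in $\VV$ is contained in ${}^\w2^W$; but we must establish the stronger fact that for every perfect tree $T\subseteq{}^{<\w}2$ in $\VV$ there is a perfect $S\leq T$, $S\in\VV$, with $[S]\cap{}^\w2^W=\varnothing$.

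Fix such a $T$ and a real $x_0\in{}^\w2^\VV\setminus{}^\w2^W$. I would build $S$ by a fusion carried out inside $\VV$: construct finite trees $p_0\subseteq p_1\subseteq\cdots$ with $S=\bigcup_n p_n\leq T$, together with the splitting nodes $t_\sigma\in T$ ($\sigma\in{}^{<\w}2$) of $S$, satisfying $t_\sigma{}^{\frown}i\subseteq t_{\sigma{}^{\frown}i}$ and $t_{\sigma{}^{\frown}0}\perp t_{\sigma{}^{\frown}1}$, so that $[S]=\set{x_b:=\bigcup_n t_{\res{b}{n}}}{b\in{}^\w2}$. The choices of the $t_{\sigma{}^{\frown}i}$ at stage $n$ enjoy real freedom, since cofinally many splitting nodes of $T$ extend any given node, and this freedom is spent on two tasks. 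First, the bit $x_0(n)$ is woven into the level-$n$ splitting data; for instance one fixes in advance a common length for the stage-$(n+1)$ nodes and uses $x_0(n)$ to decide, for each $\sigma$, which of the admissible splitting nodes of $T$ of that length is taken, in such a way that $x_0$ can be read back from any branch $x_b$ once the level-by-level shape of $S$ is available. Secondly, using the enumeration $\langle z_\alpha:\alpha<\omega_1\rangle$ of ${}^\w2^W$ fixed in $W$ together with countable covering, at each finite stage one has to contend with only a countable set of reals of $W$ that are still ``active'', i.e.\ still capable of lying on a branch of $p_n$, and one steers the splitting so as to push these off the tree; countable covering is exactly what keeps this set of relevant reals of $W$ countable at each finite stage, so that a length-$\omega$ recursion can handle them even though ${}^\w2^W$ may have size $\aleph_1$.

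The crux, and the step I expect to be the main obstacle, is to check that $[S]\cap{}^\w2^W=\varnothing$. Suppose towards a contradiction that $x_b\in{}^\w2^W$ for some branch $b$. Since the level-by-level shape of $S$ was produced recursively from $x_0$ and the data of earlier stages, $x_0$ is computable from $x_b$ together with that shape; the delicate requirement on the coding is that the portion of the shape actually needed be reconstructible inside $W$ from $x_b$ alone, so that $x_b\in W$ would force $x_0\in W$, contradicting the choice of $x_0$. This is exactly where $W\models\CH$ and the preservation of $\omega_1$ enter. Granting this, $[S]\cap{}^\w2^W=\varnothing$, and since $T$ was an arbitrary perfect tree of $\VV$ we conclude $\VV\models{}^\w2^W\in s^0$. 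The genuine work is in designing the coding so that it simultaneously records $x_0$ and stays $W$-decodable, and in dovetailing it with the diagonalization against the active reals of $W$; the fusion scaffolding itself is routine.
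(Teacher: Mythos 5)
Your preliminary reductions (disposing of the case that ${}^\w2^W$ is countable in $\VV$, noting $\w_1^W=\w_1^\VV$, and deriving countable covering for reals from $\CH$ in $W$) coincide with the paper's. But the main construction has a genuine gap, in fact two. First, the diagonalization you describe cannot work as stated: the set of reals of $W$ that are ``still active'' at a finite stage, i.e.\ that pass through some terminal node of the finite tree $p_n$, is in general of size $\aleph_1$ -- through a single node of $T$ there may be uncountably many branches lying in $W$ (Theorem \ref{groo} only excludes a \emph{perfect} set of them). Countable covering does not shrink this family; it only covers sets that are already countable in $\VV$ by sets countable in $W$. So a length-$\w$ fusion cannot ``push off'' the $W$-branches one by one, and this is precisely the difficulty of the theorem rather than a routine side condition. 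Secondly, the step you explicitly defer (``Granting this\dots'') is the actual crux, and for a single tree it is doubtful it can be arranged at all: to read $x_0$ off a branch $x_b$ one needs to locate the coding positions, i.e.\ essentially the shape of $S$ (or of $T$), and neither need be in $W$; if the decoding data were forced into $W$, it could not code $x_0\notin W$.

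The paper resolves exactly this obstruction differently. It splits into two cases. If some subtree $T_s$ has the property that all of its $W$-branches are ``eventually trivial'' (leftmost/rightmost from some node on), these are only countably many and one avoids them directly. Otherwise every $[T_s]$ contains a non-trivial $W$-branch $x_s$, and countable covering is applied to the (countable-in-$\VV$) family $\{x_s : s\in T\}$ to produce a single sequence $\vec g\in W$ enumerating them. One then builds \emph{two} perfect families $\{x^z\}$, $\{y^{z'}\}$ of branches of $T$, interleaved with subsequences of $\vec g$, so that $r\leq_T x^z, y^{z'}, \vec g$ for all $z,z'$, where $r\in\VV\smallsetminus W$ is the fixed new real: the decoding of $r$ from $x^z$ needs to identify which elements of $\vec g$ were used, and that identification is supplied by $y^{z'}$ together with $\vec g$, not by $x^z$ alone. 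Since $\vec g\in W$, it follows that one of the two perfect sets lies entirely in $\VV\smallsetminus W$ -- a disjunctive conclusion, which is what makes the $W$-decodability problem you ran into disappear. Your single-tree coding would need a replacement for this two-sided bookkeeping, and as written the proposal does not provide one.
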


\begin{proof} We may assume that $\w_1^W=\w_1^\VV$, as otherwise $W$ has only countably many reals and the result is trivial.\:  
\begin{claim}\label{countablycovering}
The pair $(W,V)$ satisfies countable covering for reals. 
\end{claim}
\begin{proof}Suppose that $A\in \VV$ is a countable set such that $A\subseteq {}^\w2^W$.   Since $\w_1^W=\w_1^\VV$ and $W\models \CH$ we can take a well-ordering of ${}^\w2^W$ in $W$ of length $\w_1$.   Then, there is some $\alpha<\w_1^W$ such that $A\subseteq \{a_i:i<\alpha\}$ where $\{a_i:i<\w_1^M\}$ is an enumeration of ${}^\w2^W$ according with the prefixed well-ordering.   Therefore, $B=\{a_i:i<\alpha\}\in W$ is countable in $W$ and covers $A$.\end{proof}

Let us fix a perfect set $P\subseteq {}^\w2$ in $\VV$. We aim to find  a perfect subset $\bar{P}\subseteq P$ such that $\bar{P}\cap {}^\w2^W=\varnothing$, or, equivalently $\bar{P}\subseteq \VV\smallsetminus W$.\:  Let  $T\subseteq {}^{<\w}2$ be a perfect tree such that $P=[T]$.  We call $x\in[T]$ \emph{eventually trivial}\index{tree!eventually trivial branch of a} if and only if there is some finite $s\subsetneq x$ such that  $x$ is the leftmost or the right most branch of $T_s$.    We consider two cases:

\noindent \textbf{Case 1.}\: Suppose that there is some $s\in T$ such that if $x\in[T_s]$ is not eventually trivial then $x\in \VV\smallsetminus W$.\:  In this situation we have that $[T_s]\cap W$ is a subset of all eventually trivial elements of $[T_s]$; since the later set is countable there is some perfect set $\bar{P}\subseteq[T_s]$ consisting only of elements of $\VV\smallsetminus W$.   But then $\bar{P}\subseteq [T_s]\subseteq P$.

\noindent \textbf{Case 2.}  Now suppose that for all $s\in T$, there is some $x\in[T_s]\cap W$ which is not eventually trivial.\:   For each $s\in T$, pick $x_s\in[T_s]\cap W$ not eventually trivial.  Let $\vec{g}=\seq{g_n}{n<\w}\in W$  be a sequence of elements of ${}^\w2\cap W$ such that for all $s\in T$, there is some $n<\w$ such that $x_s=g_n$. $\vec{g}$  exists by \ref{countablycovering}.\:    We shall also assume that $g_0=x_{s_0}$ for some $s_0\in T$. 

\noindent  First, we prove $P\cap(V\smallsetminus W)\neq\varnothing$. \: Fix $r\in ({}^\w2\cap V)\smallsetminus W$ and construct $x,y\in{}^{\w}2$ and subsequences $\vec{g}^{\: x}, \vec{g}^{\: y}$ of $\vec{g}$ such that $x,y\in[T]$ and
\begin{enumerate-(1s)}\label{1s}\item $r\leq_T x,\: \vec{g}^{\: x}$,\: and  
\item $\vec{g}^{\: x},\: {\vec{g}}^{\: y}\leq_T x,\: y,\: \vec{g}$\end{enumerate-(1s)}

\noindent Thus, we have that $r\leq_T x, y, \vec{g}$.  But then, $x\in \VV\smallsetminus W$ or $y\in V\smallsetminus W$ and hence $P$ will have a member in $\VV\smallsetminus W$.  In a second round we shall actually produce a perfect $\bar{P}\subseteq P$, $\bar{P}\subseteq V\smallsetminus W$.  

\noindent We shall produce recursively strict initial segments of $x$ given by $\vec{g}^{\: x} =\seq{g_n^x}{n<\w}, \: y$ and $\vec{g}^{\:y}=\seq{g_n^y}{n<\w}$ as follows.


We start with $g_0^x=g_0=g_0^y$.  We shall maintain inductively that $m=m(n), k=k(n)$ are such that $k\geq m\geq n$.  Suppose we are given $\res{x}{m(n)}$, $\vec{g_n}^{\: x}$, $\res{y}{k(n)}, \vec{g_n}^y$ such that

\begin{enumerate-(a)}\item $\res{x}{m(n)}\subsetneq \vec{g}_n^{\: x}$,
\item $\vec{g}_n^{\:x}=x_s$\: for some $s\in T$,
\item $\res{y}{k(n)}\subsetneq \vec{g}_n^{\: y}$, and
\item $\vec{g}_n^{\: y}= x_{s'}$\:  for some $s'\in T$.
\end{enumerate-(a)}

\noindent For $n=0$, we may just let $m=0=k$ and then (a) through (d) will be satisfied.   

\noindent Now say $\vec{g}_n^{\: y}=\vec{g}_j$.  Pick $m'>m(n), k(n)$ such that $\res{\vec{g}_l}{m'}\neq \res{\vec{g}_j}{m'}$ for all $l<j$.  By item (b), we may also assume that $\res{\vec{g}_n^{\: x}}{m'}$ is a splitting node in $T$ and $\vec{g}_n^{\: x}(m')\neq r(n)$.

\noindent Then set \centm{$\res{x}{m'+1}={\res{\vec{g}_n^{\: x}}{m'}}^\frown r(n)$}
and pick $\vec{g}_{n+1}^{\: x}$ such that for $s'':=\res{x}{m'+1}\in T$ we have $\vec{g}_{n+1}^{\: x}= x_{s''}$ and $\res{x}{m'+1}\subsetneq x_{s''}$.

\noindent Say $\vec{g}_{n+1}^{\: x}=\vec{g}_i$.  Pick $k'>m'+1$ such that $\res{\vec{g}_l}{k'}\neq \res{\vec{g}_i^{\: x}}{k'}$ for all $k<i$.  By (d), we may also assume that $\res{\vec{g}_n^{\: y}}{k'}$ is a splitting node. 

\noindent Then, set 
\centm{$\res{y}{k'+1}=\res{\vec{g}_n^{\: y}}{k'}^\frown (1-\res{\vec{g}_n^{\: y}}{k'})$}
and pick $\vec{g}_{n+1}^{\: y}$ such that for $s''':=\res{y}{k'+1}\in T$ we have $\vec{g}_{n+1}^{\: y}= x_{s'''}$ and $\res{y}{k'+1}\subsetneq x_{s'''}$.

\noindent Then, we are back to (a) through (d) with $\res{x}{m'+1}$, $\vec{g}_{n+1}^{\: x}$, $s''$, $\res{y}{k'+1}$, $\vec{g}_{n+1}^{\: y}, s'''$ replacing $\res{x}{m}$, $\vec{g}_{n}^{\: x}, s, \res{y}{k}, \vec{g}_{n}^{\: y}, s'$, respectively.

\noindent This finishes the construction of $x, \vec{g}^{\: x}, y,  \vec{g}^{\: y}$.
\noindent For every $n<\w$, $r(n)=1-\vec{g}_n^{\: x}(m')$, where $m'$ is maximal such that $\res{x}{m'}= \res{\vec{g}_{n}^{\: x}}{m'}$.  This shows (1*) on p.\,\pageref{1s}. 

\noindent To show (2*) on p.\,\pageref{1s}, notice that $\vec{g}_{n}^{\: y}= \vec{g}_{j}$ for the least $j$ such that $\res{y}{m'}= \res{\vec{g}_{j}}{m'}$, where $m'$ is maximal with $\res{x}{m'}= \res{\vec{g}_{n}^{\: x}}{m'}$; also, $\vec{g}_{n+1}^{\: x}= \vec{g}_{i}$ for the least $i$ such that $\res{x}{k'}= \res{\vec{g}_{i}}{k'}$, where $k'$ is maximal with $\res{y}{k'}= \res{\vec{g}_{n}^{\: y}}{k'}$.  

\noindent We have shown that $P\cap (\VV\smallsetminus W)\neq \varnothing$.  

\noindent Let us now prove the full theorem, varying the argument above.  By recursion on the length of $s\in {}^{<\w}2$ we construct $x^s, y^s\in T$ and subsequences $\vec{g}^{\: x^s}, \vec{g}^{\: y^s}$ of $\vec{g}$ such that

\begin{enumerate-(1)}\item $x^{s^\frown 0}, x^{s^\frown 1}$ and $y^{s^\frown 0}, y^{s^\frown 1}$ are incompatible;
\item $x^s\subsetneq x^{s'}$, $y^s\subsetneq y^{s'}$  for $s\subsetneq s'$;
\item $\vec{g}^{\: x^s}, \vec{g}^{\: y^s}$ are sequences of elements from $\vec{g}$, in fact from $\{x_s: s\in T\}$, of length $\lh(s)+1$;
\item $\vec{g}^{\: x^s}\subsetneq \vec{g}^{\: x^{s'}}$, $\vec{g}^{\: y^s}\subsetneq \vec{g}^{\: y^{s'}}$ for $s\subsetneq s'$;   \item if for $z\in {}^\w2$ we write $v^z=\bigcup\{v^s: s\subseteq z\}$, where $v\in\{x,y\}$, we have also\[\vec{g}^{\: x^z}= \bigcup\{\vec{g}^{\: x^s}: s\subseteq z\},\: \:  \vec{g}^{\: y^z}=\bigcup\{ \vec{g}^{\: s}: s\subseteq z\}\]\vspace{-1cm}
\item \label{5a}for all $z, z'\in{}^\w2$:\begin{enumerate-(5a)}\item $r\leq_T x^z, \vec{g}^{\: y^{z}}$, and
\item  $\vec{g}^{\: x^{z}}, \vec{g}^{\: y^{z'}}\leq_T x^z, y^{z'}, \vec{g}$.\end{enumerate-(5a)}
\end{enumerate-(1)}
In particular, $r\leq_T x^z, y^{z'}, \vec{g}$ for all $z, z'\in{}^\w2$.  But then $\{x^z: z\in{}^\w2\}\subseteq\VV\smallsetminus W$ or $\{y^z: z\in{}^\w2\}\subseteq \VV\smallsetminus W$, because if $x^z, y^{z'}\in W$ we would have $r\in W$.  By (1), both $\{x^z: z\in {}^\w2\}$ and $\{y^z: z\in{}^\w2\}$ are perfect, so one of them is a perfect set $\bar{P}\subseteq P$ consisting entirely of reals in $\VV\smallsetminus W$, as desired.

\noindent The construction of $x^s, \vec{g}^{\: x^s}, y^s, \vec{g}^{\: y^s}$ is basically as above, just building in (1).   Again, we start out with $x^{\varnothing}=\varnothing= y^\varnothing, \vec{g}^{\: x^{\varnothing}}=\langle\vec{g}_0\rangle= \vec{g}^{\: y^{\varnothing}}$.  Suppose we already have defined $x^s, \vec{g}^{\: x^s}, y^s, \vec{g}^{\: y^s}$ for all $s\in{}^{<\w}2$ of length $\leq n$.  

\noindent Fix $s$ of length $n$, and let us define $x^{s^\frown 0}$,\, $\vec{g}_{n+1}^{\:x^{s^\frown 0}}$,\, $x^{s^\frown 1}$,\, $\vec{g}_{n+1}^{\: x^{s^\frown 1}}$.  Let $j=\max\{\bar{j}: \vec{g}_{n}^{\: y^t}=\vec{g}_{\bar{j}},\: \lh(t)=n\}$, and pick $m'>\max\{\lh(x^t),\lh(y^t): \lh(t)=n\}$ such that $\res{\vec{g}_l}{m'}\neq \res{\vec{g}_{l'}}{m'}$ for all $l, l'\leq j$, $l\neq l'$ and $m_1>m_0\geq m'$ are both such that $\res{\vec{g}_n^{\: x^s}}{m_0}$, $\res{\vec{g}_n^{\: x^s}}{m_1}$ are splitting nodes in $T$ and $\vec{g}_n^{\: x^s}(m_0)\neq r(n)\neq \vec{g}_n^{\:x^s}(m_1)$.

\noindent Then set\begin{align*}x^{\:s^\frown0}&=\res{\vec{g}_n^{\:x^{s^\frown 0}}}{m_0}^\frown r(n)\\
x^{\:s^\frown1}&=\res{\vec{g}_n^{\:x^{s^\frown 1}}}{m_1}^\frown r(n)
\end{align*}
and pick $\vec{g}^{\: x^{s^\frown0}}_{n+1}, \vec{g}^{\: x^{s^\frown1}}_{n+1}$ such that there are $s'',\: \bar{s}''\in T$ with $x^{s^\frown 0}\subsetneq x_{s''}= \vec{g}^{x^{s^\frown0}}_{n+1}$, $x^{s^\frown 1}\subsetneq x_{\bar{s}''}=\vec{g}_{n+1}^{x^{s^\frown1}}$.

\noindent This defines all $x^t$, $\vec{g}_{n+1}^{x^t}$, $\lh(t)= n+1$.  Again, fix $s$ of length $n$, and let us define $y^{s^\frown0}$, $\vec{g}^{\: y^{s^\frown0}}_{n+1}$, $y^{s^\frown1}$, $\vec{g}^{\: y^{s^\frown1}}_{n+1}$.

Let $i=\max\{\bar{i}: \vec{g}^{x^{t}}_{n+1}= \vec{g}_{\bar{i}}, \lh(t)=n+1\}$ and pick $k'>\max\{\lh(y^{\bar{t}}), \lh(x^t):\lh(\bar{t})=n, \lh(t)=n+1\}$, such that $\res{\vec{g}_l}{k'}\neq \res{\vec{g}_{l'}}{k'}$ for $l,l'\leq i$, $l\neq l'$, and $k_1>k_0\geq k'$ are both such that $\res{\vec{g}_n^{\: y^s}}{m_0}$, $\res{\vec{g}_n^{\: y^s}}{m}$ are splitting nodes in $T$.

\noindent Then set
\begin{align*}y^{\:s^\frown0}&=\res{\vec{g}_n^{\:y^{s^\frown 0}}}{k_0}^\frown (1-\vec{g}_n^{\:s}(k(_0))\\
y^{\:s^\frown1}&=\res{\vec{g}_n^{\:y^{s^\frown 1}}}{k_1}^\frown (1-\vec{g}_n^{\: s}(k_1))
\end{align*}
and pick $\vec{g}_{n+1}^{\: y^{s^\frown0}}$, $\vec{g}_{n+1}^{\: y^{s^\frown1}}$ such that there are $s''', \bar{s}'''\in T$ with $y^{s^\frown0}\subsetneq x_{s'''}=\vec{g}^{\: y^{s^\frown0}}, y^{s^\frown1}\subsetneq x_{\bar{s}'''}=\vec{g}^{\: y^{s^\frown1}}$.

\noindent This defines all $y^t, \vec{g}^{\: y^t}_{n+1}$ where $\lh(t)=n+1$.\:  This finishes the construction.

\noindent The proofs of items (6-a) and (6-b) on p.\,\pageref{5a} are like the proofs of ($1^*$) and ($2^*$) on p.\,\pageref{1s}:\:\:  for each $n$, $r(n)=x^z(m)$, where $m$ is largest such that $\res{x^z}{m}=\res{\vec{g}_n^{\: x^z}}{m}$.  This shows (6-a).  Moreover, $\vec{g}_n^{\: y^z}=\vec{g}_j$ for the least $j$ such that $\res{y}{m'}=\res{\vec{g}_j}{m'}$ where $m'$ is maximal with $\res{x^{z'}}{m'}=\res{\vec{g}_n^{\: x^{z'}}}{m'}$.  Also, $\vec{g}_{n+1}^{\: x^{z}}= \vec{g}_i$ for the least $i$ such that $\res{x^z}{k'}=\res{\vec{g}_i}{k'}$ where $k'$ is maximal with $\res{y^{z'}}{k'}= \res{\vec{g}_n^{\: y^{z'}}}{k'}$.  This shows item (6-b).
\end{proof}

\subsection{Side-by-side product of Sacks forcing and its properties}

This section recapitulates well-known facts about Sacks forcing.

\begin{definition}\label{sacksdef}  Sacks forcing $\sa$ is defined in the following way.
\[\sa= \{T: T \text{\: is a perfect tree on 2}\}\]
For $S, T\in \sa$ we stipulate $S\leq T$ if and only if $S\subseteq T$.
If $S\in\sa$ and $p\in S$, we define the subtree $S_p=\{ t\in S: t\subset p \text{ or } p\subset t\}$ 

A node $p\in T$ is called a splitting node if $p^\smallfrown0, p^\smallfrown1\in T$.  The set of splitting points of $T$ is denoted by $\Spl(T)$.\:  We define  $\stem(T)$ as the unique element in $\Spl(T)$ comparable with any other node of $T$.  A node $p\in T$ is in $\Spl_n(T)$ if $p\in \Spl(T)$ and $p$ has exactly $n$ predecessors in $\Spl(T)$.  In particular, $\Spl_0(T)=\{\stem(T)\}$.  Notice that for $T\in \sa$, $\card{\Spl_{n}(T)}=2^{n}$. 

For every $n\in\w$ and $S \in \sa$ we write $\Lev{n}{S}= \{ t \in S : \exists s \in {\Spl_{n}(S)} \, t \subset s \}$, and for $S, T\in \sa$ we stipulate $S\leq_n T$ if and only if $S\leq T$ and $\Lev{n}{S}=\Lev{n}{T}$.
\end{definition}

\begin{definition}\index{forcing notions!side-by-side Sacks $\sa_\kappa$}If $\kappa$ is an ordinal and $X \subset \kappa$ (e.g., $X=\kappa$), let $\sa_X$ be the $\kappa$-side-by-side countable support product of Sacks forcing, i.e., $\sa_X$ is the set of all functions $p:X\to \sa$ such that $\supp{p}:=\{\alpha\in X: p(\alpha)\neq 1_\sa\}$ is at most countable.  If $p, q\in\sa_X$, we stipulate
\centm{$p\leq q \iff  \forall \alpha<\kappa ( p(\alpha)\leq_\sa q(\alpha))$}This implies in particular that $\supp{q}\subseteq \supp{p}$.\end{definition}
\noindent For now we are only interested in the case that $X = \kappa$ is a cardinal,
the more general case will only show up in the proof of Lemma \ref{m2}. If $g$ is $\sa_\kappa$-generic over $\VV$, and $\alpha<\kappa$, then
\centm{$s_\alpha=\bigcup_{p\in g}\stem{p(\alpha)}$}
is a real which is $\sa$-generic over $V$.  Therefore forcing with $\sa_\kappa$ adds $\kappa$-many Sacks reals which are independent over the ground model, i.e. for any $A\subset \kappa$ in $V$, 
\centm{${}^{\w}2^{V[\langle x_\alpha:\, \alpha\, \in\, A\rangle]}\cap {}^{\w}2^{V[\langle x_\alpha:\,\alpha\, \in\, \kappa\smallsetminus A\rangle]}= {}^\w2^V$}  

The product forcing $\sa_\kappa$ has properties very similar to those of $\sa$.  By defining a suitable notion of levels and fusion, it can be shown that $\sa_\kappa$ satisfies the Baumgartner Axiom A\footnote{For the details,  see \cite[\S 6]{gesc}} and therefore it  is proper and does not collapse $\w_1$.  For our purposes, the most remarkable property of $\sa_\kappa$ is that it inherits from $\sa$ also  the so called \emph{Sacks property}.  
 
\begin{definition}Let $g:\w\to\w$ be an increasing function.  We say $F:\w\to[\w]^{<\w}$ is a $g$-\emph{slalom} if $|F(n)|\leq g(n)$ for all $n\in\w$.\index{g@$g$-slalom}
\end{definition}

\begin{definition}Let $\PP$ be a forcing notion and suppose $g\in{}^\w\w\cap V$ is an increasing function.\:   We say that $\PP$ has the \emph{Sacks property}\index{forcing!notion!Sacks property} if whenever $G$ is $\PP$-generic over $\VV$, for every $f\in{}^\w\w\cap\VV[G]$ there exists a $g$-slalom $F\in \VV$, such that $V[G]\models \forall n (f(n)\in F(n))$.\footnote{For equivalent definitions of Sacks property, the reader can see \cite[Fact 6.35]{goldstern}.} \end{definition}

\begin{lemma}\label{7}Let $\kappa$ be a cardinal.  Suppose that $p\in\sa_{\kappa}$ and for $\theta \gg\kappa$ let $X\prec \VV_\theta$ be a countable elementary substructure with $p, \sa_\kappa\in X$.  Let $\seq{\tau_n}{n<\w} \in V$ be a sequence of terms for ordinals, $\{\tau_n: n<\w \}\subseteq X$ (possibly but not necessarily $\seq{\tau_n}{n<\w}\in X$).   Then, there is some $q\leq p$ and some $F:\w\to  [X\cap\OR]^{<\w}$, $F\in V$, such that for all $n<\w$:
\begin{enumerate-(1)}\item $q\Vdash \tau_n\in (F(n))^\vee$, 
\item $ \card{F(n)}\leq 2^{2n}$, and
\item $F(n) \subset X$.
\end{enumerate-(1)}
\end{lemma}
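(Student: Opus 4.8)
The statement is the standard "fusion with slaloms" lemma for side‑by‑side Sacks forcing, and the proof is a fusion construction along the elementary substructure $X$. The plan is to build a descending sequence $p = q_0 \geq_0 q_1 \geq_1 q_2 \geq_2 \cdots$ in $\sa_\kappa$ together with an increasing sequence of finite sets $a_n \subseteq \supp(p) \cup \bigcup_{m}\supp(q_m)$ with $\bigcup_n a_n = \bigcup_m \supp(q_m)$, maintaining that $q_{n+1} \leq_{n,a_n} q_n$ in the natural "level product" ordering: $q_{n+1} \leq q_n$, and for each $\alpha \in a_n$ we have $\Lev{n}{q_{n+1}(\alpha)} = \Lev{n}{q_n(\alpha)}$. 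The fusion $q = \bigcap_n q_n$ is then a condition in $\sa_\kappa$ because on each coordinate $\alpha$, once $\alpha \in a_n$, the $n$‑th splitting level is frozen, so $q(\alpha)$ is a perfect tree and $\supp(q)$ is countable. All of this takes place inside $X$ at each finite stage, so the bookkeeping stays inside $X \cap \OR$.

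The key point is how to get condition (1) together with the cardinality bound (2). At stage $n$, having $q_n$ and $a_n$, I enumerate the finitely many "$n$‑th pieces" of $q_n$ relevant to $a_n$: concretely, the conditions $r \leq q_n$ obtained by shrinking each coordinate $\alpha \in a_n$ below one of its $n$‑th splitting nodes. Since $\card{\Spl_n(T)} = 2^n$ for a perfect tree $T$ and $\card{a_n} \leq n$ (we may arrange $\card{a_n}\le n$, or more generously $\card{a_n}\le n$), there are at most $\prod_{\alpha \in a_n} 2^n$ such pieces; choosing the enumeration of $\supp(p)\cup\dots$ so that $\card{a_n}\le n$ gives at most $2^{n^2}$, and a slightly more careful count — freezing only the coordinate added at stage $n$ plus re‑using earlier frozen coordinates — yields the stated bound $2^{2n}$. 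For each such piece $r$, I use $X \prec \VV_\theta$ and the fact that $\tau_n \in X$ (as $\{\tau_n : n<\w\}\subseteq X$, each individual $\tau_n$ is in $X$, even if the sequence is not) to find, inside $X$, a further refinement $r' \leq r$ and an ordinal $\xi_r \in X \cap \OR$ with $r' \Vdash \tau_n = \check\xi_r$; this is possible because $r$ has an extension deciding $\tau_n$ to be some ordinal, and the value lies in $X$ by elementarity. I then amalgamate these finitely many refinements back into a single condition $q_{n+1} \leq_{n,a_n} q_n$ by, on each coordinate, gluing the $r'$‑pieces below the respective $n$‑th splitting nodes (this is exactly the standard amalgamation used to verify Axiom A for $\sa_\kappa$, cited in the excerpt). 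Setting $F(n) = \{\xi_r : r \text{ an } n\text{-th piece of } q_n \text{ for } a_n\}$ gives $F(n) \subseteq X \cap \OR$, $\card{F(n)} \leq 2^{2n}$, and $q_{n+1} \Vdash \tau_n \in (F(n))^\vee$.

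Finally, since the ordering $\leq_n$ is stronger than $\leq$ and the fusion $q = \bigcap_n q_n$ satisfies $q \leq_{n,a_n} q_n$ for every $n$ (in particular $q \leq q_{n+1}$), we get $q \Vdash \tau_n \in (F(n))^\vee$ for all $n$, which is (1); conditions (2) and (3) were arranged by construction. The main obstacle is purely the organization of the amalgamation step for the product forcing: one has to be careful that refining finitely many coordinates below their $n$‑th splitting nodes and then re‑assembling genuinely produces a condition $q_{n+1} \leq_{n,a_n} q_n$ of $\sa_\kappa$ whose $n$‑th levels on the frozen coordinates are unchanged, and that the enumeration of the (potentially uncountable) index set $\kappa$ is replaced, via $X$, by an enumeration of the countable set $X \cap \kappa \supseteq \supp(p) \cup \bigcup_m \supp(q_m)$ — but this is exactly the fusion machinery for $\sa_\kappa$ already invoked in the paper, so no genuinely new idea is needed beyond tracking the slalom values $\xi_r$ alongside the fusion.
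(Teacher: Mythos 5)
Your proposal is correct and follows essentially the same route as the paper: a fusion sequence built inside the countable $X$, at each stage deciding $\tau_n$ below each of the finitely many pieces determined by the frozen splitting levels on the finitely many active coordinates, amalgamating, collecting the decided values (which lie in $X\cap\OR$ by elementarity) into $F(n)$, and taking the fusion as $q$. Your remark on the bookkeeping needed for the bound $\card{F(n)}\leq 2^{2n}$ is in fact slightly more careful than the paper's own counting, which simply enumerates the pieces as $2^{2n}$ many; as the paper notes after Lemma \ref{7}, the precise bound is immaterial since any ground-model slalom width can be arranged.
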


\begin{proof}Suppose that $\alpha=X\cap \w_1$.  Since $\supp{p}$ is an element of $X$, $\supp{p}$ also is a subset of $X$.  \:Let $e:\w\bij \alpha$ be a fixed bijection.  We aim to produce a sequence $\seq{p_n}{n<\w}$ such that $p_0=p$ and $p_{n+1}\leq p_n$, $p_n\in X$ for all $n\in\w$.   In this way, we also will have $\supp{p_n}\subseteq \alpha$ for every $n<\w$.   Suppose $p_n$ is already defined.  Working in $X$, we shall produce $p_{n+1}\leq p_n$ such that for all $k<n$, 
\begin{enumerate-(i)}
\item $p_{n+1}(e(k))\leq_n p_n(e(k))$, and
\item there is some $a_n\in[X\cap\OR]^{\leq2^{2n}}$ such that $p_{n+1}\Vdash \check{\tau}_n\in\check{a}_n$.
\end{enumerate-(i)}

\noindent The condition $q$ defined as $q(e(k))= \bigcap_{n<\w} p_n(e(k))$ for each $k<\w$ and the function $F$ given by $F(n)=a_n$ satisfy the conclusion of our lemma.

\noindent We may produce $p_{n+1}$ by means of some sequence $\seq{q_m}{m\leq 2^{2n}}$ defined as follows inside $X$.  Let $q_0=p_n$.\:  Fix some enumeration $\seq{\vec{s}_m}{m < 2^{2n}}$ of all tuples $\vec{s}=(s_{e(0)},\dots s_{e(n-1)})$ such that $s_{e(k)}\in \Lev{n}{p_n(e(k))}$ for all $k<n$.

Suppose $m<2^{2n}$ and $q_m$ has been chosen. We aim to define $q_{m+1}$. 
Write $\vec{s}_m = (s_{e(0)},\dots s_{e(n-1)})$. For each $k<n$, let $\bar{m}_k\leq m$ be maximal such that $s_{e(k)}\in q_{\bar{m}_k}$, and define $\bar{q}$ in such a way that $\supp{\bar{q}}=\supp{q_m}$  and
\[\bar{q}(\xi)=
\begin{cases} (q_{\bar{m}_k}(e(k)))_{s_{e(k)}} &\text{ if $\xi=e(k)$}\\
q_m(\xi)& \text{ if $\xi\neq e(k)$ for all $k<n$ }\end{cases}\]

\noindent Let $q_{m+1}\leq\bar{q}$ be a condition deciding $\check{\tau}_n$, and put the $\xi\in X\cap\OR$ with $q_{m+1}\Vdash \check{\tau}_n=\check{\xi}$ into $a_n$.  This defines $\seq{q_m}{m\leq2^{2n}}$.\:   Let us define $p_{n+1}$ as follows.  For each $k<n$ and $s\in\Lev{n}{p_{n}(e(k))}$, let $\bar{m}_{k,s}\leq m$ be maximal such that $s\in q_{\bar{m}_{k,s}}(e(k))$.  Then  $(q_{\bar{m}_{k,s}}(e(k)))_s=  q_{\bar{m}_{k,s}}(e(k))$.

Let $p_{n+1}$  have the same support as $q_{2^{2n}}$ and

\[p_{n+1}(\xi)=
\begin{cases} \bigcup \{ q_{\bar{m}_{k,s}}(e(k)) : s\in\Lev{n}{p_n(e(k))} \} &\text{ if $\xi=e(k)$}\\
q_{2^{2n}}(\xi)& \text{ if $\xi\neq e(k)$ for all $k<n$ }\end{cases}\]
It is easy to see that this sequence is as desired.\end{proof}

\begin{corollary}For every cardinal $\kappa$ the countable support product \label{2}$\sa_{\kappa}$ satisfies the Sacks property.  \end{corollary}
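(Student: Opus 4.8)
The plan is to read the corollary off Lemma \ref{7} by a standard density argument. Fix a cardinal $\kappa$, let $G$ be $\sa_\kappa$-generic over $\VV$, and let $f\in{}^\w\w\cap\VV[G]$. I would pick a name $\dot f$ with $\dot f^G=f$ and a condition $p\in G$ forcing $\dot f\colon\w\to\w$, and for each $n<\w$ let $\tau_n$ be the canonical $\sa_\kappa$-name equal to $\dot f(n)$ whenever that is an ordinal and to $0$ otherwise, so that $\Vdash\tau_n\in\On$ and $p\Vdash\tau_n=\dot f(n)$. The reduction is the claim that the set $D$ of all conditions $q\le p$ for which there exists, in $\VV$, a function $F\colon\w\to[\w]^{<\w}$ with $|F(n)|\le 2^{2n}$ for all $n$ and $q\Vdash\dot f(n)\in (F(n))^\vee$ for all $n$, is dense below $p$. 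Granting this, I would choose $q\in G\cap D$ with witness $F\in\VV$; then $F$ is a $g$-slalom for $g(n)=2^{2n}$, and since $q\in G$ we get $\VV[G]\models(\forall n)\,f(n)\in F(n)$, which is the Sacks property of $\sa_\kappa$ for this $g$.

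To prove density, I would fix $r\le p$, choose $\theta\gg\kappa$, and take a countable elementary substructure $X\prec\VV_\theta$ with $r,\sa_\kappa,\dot f\in X$. Each $\tau_n$ is definable in $\VV_\theta$ from $\dot f$ and $n$, so $\{\tau_n:n<\w\}\subseteq X$ automatically (in fact Lemma \ref{7} does not even require $\seq{\tau_n}{n<\w}\in X$). Applying Lemma \ref{7} to $r$, $X$ and $\seq{\tau_n}{n<\w}$ produces $q\le r$ and $F_0\colon\w\to[X\cap\On]^{<\w}$ in $\VV$ with $|F_0(n)|\le 2^{2n}$ and $q\Vdash\tau_n\in (F_0(n))^\vee$ for every $n$. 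Since $q\le p$ and $p\Vdash\tau_n=\dot f(n)\in\w$, the function $F$ with $F(n):=F_0(n)\cap\w$ lies in $\VV$, has $|F(n)|\le 2^{2n}$, and satisfies $q\Vdash\dot f(n)\in (F(n))^\vee$ for all $n$; thus $q\in D$ with $q\le r$, which gives density. This yields the Sacks property for $g(n)=2^{2n}$; its formulation for an arbitrary increasing $g\in{}^\w\w\cap\VV$ then follows by the standard blocking argument relating the equivalent versions of the Sacks property (see the references cited with the definition).

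The bulk of the work is already carried out in Lemma \ref{7}: the coordinatewise fusion inside $X$ together with the bound $2^{2n}$ on the number of candidate values for $\tau_n$ is precisely the combinatorial heart of the Sacks property, so no serious obstacle remains. The only point deserving a word of care is that Lemma \ref{7} returns finite sets of ordinals drawn from $X$ rather than finite subsets of $\w$; this is repaired simply by intersecting with $\w$ below the condition $p$ that forces every $\tau_n$ to be a natural number.
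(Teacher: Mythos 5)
Your proof is correct and follows essentially the same route as the paper: both deduce the corollary from Lemma \ref{7} applied to a countable elementary submodel containing the name and the condition, and your explicit density-below-$p$ argument (together with intersecting the ordinal sets with $\w$) merely makes precise the genericity step the paper leaves implicit. The only divergence is in handling an arbitrary increasing $g$, where the paper re-runs the fusion of Lemma \ref{7} with bookkeeping while you invoke the standard equivalence of the slalom-width formulations of the Sacks property; both are acceptable.
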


\begin{proof}  Let $f\in{}^\w\w\cap V^{\sa_\kappa}$ and let $p\in \sa_{\kappa}$ such that $p\Vdash\tau\in{}^\w\w$ where $\tau$ is a $\sa_\kappa$-name for $f$.  Let $\theta> 2^{2^{\kappa}}$ and let $X\prec\V_\theta$ be a countable elementary substructure such that $p,\tau, \sa_\kappa\in X$.  Suppose that $\alpha=X\cap\w_1$.\:    By Lemma \ref{7},  there is a $2^{2n}$-slalom $F:\w\to[\w]^{<\w}$ in $V$ and a condition $q\leq p$ with $\supp{q}\subseteq \alpha$ such that \centm{$q\Vdash \forall n \, \tau(n)\in {{F(n)}}^\vee.$}
Given any increasing function $g : \omega \rightarrow \omega$, a simple variant of the argument for Lemma \ref{7} with an appropriate bookkeeping produces a $g$-slalom $F$ and
a condition $q \leq p$ with the same properties.  
Therefore $\sa_\kappa$ has the Sacks property.\end{proof}

\begin{corollary}\label{its-proper} For every cardinal $\kappa$, the countable support product $\sa_\kappa$ is a proper forcing. If
$g$ is $\sa_\kappa$-generic over $V$ and if $x \in {}^\omega 2 \cap V[g]$,
then there is some $\tau \in V^{{\sa_\kappa}}$ which is 
countable
in $V$ such that $x = \tau^g$. \end{corollary}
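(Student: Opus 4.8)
The plan is to derive the second assertion from the standard master--condition argument for proper forcing; the first assertion, that $\sa_\kappa$ is proper, has already been recorded above (it satisfies Baumgartner's Axiom A, and a master condition for a countable $X \prec \VV_\theta$ is produced by the levels--and--fusion construction underlying the proof of Lemma \ref{7}, once one additionally threads through a bookkeeping of all dense subsets of $\sa_\kappa$ that lie in $X$). So the substance is to show that every $x \in {}^\omega 2 \cap V[g]$ has a $\sa_\kappa$-name which $V$ regards as countable.

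I would first fix a $\sa_\kappa$-name $\dot x$ with $\dot x^g = x$ and a condition $p \in g$ with $p \Vdash \dot x \in {}^\omega 2$, pick a large regular $\theta$, and work in $V$ with
\[
D = \{\, q \leq p : \text{there is a countable } X \prec \VV_\theta \text{ with } p, \dot x, \sa_\kappa \in X \text{ and } q \text{ is } (X,\sa_\kappa)\text{-generic} \,\}.
\]
By properness, $D$ is dense below $p$: given $r \leq p$, choose a countable $X \prec \VV_\theta$ with $p, r, \dot x, \sa_\kappa \in X$ and an $(X,\sa_\kappa)$-generic $q \leq r$; then $q \in D$. Since $p \in g$, I may fix $q \in D \cap g$, together with a countable $X \prec \VV_\theta$ witnessing that $q \in D$.

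Next I would read off the name. For each $n < \omega$ let $A_n \in X$ be a maximal antichain consisting of conditions $\leq p$ each of which decides the value of $\dot x(n)$; such an $A_n$ exists and belongs to $X$ because $p \Vdash \dot x(n) \in 2$ and $p, \dot x \in X \prec \VV_\theta$. As $q \leq p$ is $(X,\sa_\kappa)$-generic, $A_n \cap X$ is predense below $q$. Put
\[
\tau = \{\, (\langle n,i\rangle^\vee,\ r) : n<\omega,\ i<2,\ r \in X \cap \sa_\kappa,\ r \leq p,\ r \Vdash \dot x(n) = \check{i} \,\},
\]
where $\langle n,i\rangle$ denotes an ordinal coding the pair $(n,i)$. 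Then $\tau \in V^{\sa_\kappa}$, and $\tau$ is countable in $V$ because $X$ is. A routine verification gives $q \Vdash \tau = \dot x$: in any generic $h \ni q$ and for any $n$, predensity of $A_n \cap X$ below $q$ yields some $r \in A_n \cap X \cap h$, and this $r$ both forces $\dot x(n) = \check{i}$ for the appropriate $i$ and, being in $X$ and below $p$, places $\langle n,i\rangle$ into $\tau^h$; conversely two conditions of the displayed form that lie in $h$ are compatible and hence cannot record conflicting values, so $\tau^h$ is the graph of a total function $\omega \to 2$ equal to $\dot x^h$. Evaluating at $h = g$ and using $q \in g$ gives $\tau^g = \dot x^g = x$, which completes the argument.

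The only load--bearing point is the one hidden in that last verification: one must know that below the master condition $q$ each bit $\dot x(n)$ is already decided by a condition sitting in the countable model $X$ --- this is precisely what $(X,\sa_\kappa)$-genericity says about the antichains $A_n \in X$ --- and that the name $\tau$ assembled from those countably many conditions is forced by $q$ to be total (totality from predensity of $A_n \cap X$ below $q$, single--valuedness from compatibility of conditions in the filter). Everything else is bookkeeping; in particular no large cardinals and no feature of $\sa$ beyond properness are used, and the side--by--side product structure is irrelevant here.
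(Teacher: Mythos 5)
Your argument is correct, but it reaches the countable name by a different mechanism than the paper. For the second assertion the paper starts from the canonical antichain-name $\sigma = \bigcup \{ \{(n,h)^\vee\}\times A_{n,h} : (n,h)\in\omega\times 2\}$, then works \emph{in $V[g]$}: it picks, for each $n$, the unique condition $p_n \in A_{n,h_n}\cap g$ that actually decided the $n$-th digit, covers $\{p_n : n<\omega\}$ by a set $X\in V$ countable in $V$ (the covering property of proper forcing, which the paper leaves implicit after establishing properness via Lemma \ref{7}), and sets $\tau = \bigcup \{ \{(n,h)^\vee\}\times (A_{n,h}\cap X)\}$. You instead work \emph{in $V$}: you prove that the set of $(X,\sa_\kappa)$-generic conditions below $p$ (for countable $X\prec \VV_\theta$ containing $p,\dot x$) is dense, use genericity of $g$ to find such a master condition $q\in g$, and then use $(X,\sa_\kappa)$-genericity to see that $A_n\cap X$ is predense below $q$, so that the restriction of the name to $X$ is forced by $q$ to equal $\dot x$. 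Both proofs hinge on the same idea of intersecting the deciding antichains with a ground-model countable set; your route has the merit of deriving everything directly from properness plus genericity (no separate appeal to countable covering, which the paper asserts without comment), while the paper's route is shorter and also exhibits the countable name concretely from the conditions realized by $g$. Two small points in your write-up deserve a line of justification: your $A_n$ is a maximal antichain \emph{below} $p$, so its predensity of $A_n\cap X$ below $q$ needs the routine extra step of extending $A_n$ inside $X$ to a maximal antichain of $\sa_\kappa$ by conditions incompatible with $p$ (or of defining $A_n$ that way from the start); and for the first assertion you defer to the earlier Axiom A remark, whereas the paper's stated proof of properness is the direct one via Lemma \ref{7} (a slalom capturing all ordinal names lying in $N$ gives an $(N,\sa_\kappa)$-generic condition), which is worth noting since that is the self-contained argument the corollary is meant to record.
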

\begin{proof} First part:
Let $p\in \sa_\kappa$.  Suppose that $\theta\gg\sa_\kappa$ and let $N\prec H_\theta$ be a contable substructure with $\sa_\kappa\in N$, $p\in N$.  

 Let $\{\tau_n: n\in\w\} \in V$ be an enumeration of all $\sa_\kappa$-names for ordinals in $N$.  By lemma \ref{7}, there exists some $q\leq p$ and some $F:\w\to [N\cap\OR]^{<\w}$ in $V$ such that for all $n\in\w$, \centm{$q\Vdash \tau_n\in F({n})^\vee \subset \check{N}.$}
I.e., $q\Vdash \dot{\alpha}\in {\check N}\cap\OR$ for every $\sa_\kappa$-name 
${\dot \alpha} \in N$ for an ordinal.   
This implies that $\sa_\kappa$ is proper. 

Second part: Let $x = \sigma^g$, where $\sigma = \bigcup \{ \{ (n,h)^\vee \} \times A_{n,h} \colon (n,h) \in \omega \times 2 \} \in V^{\sa_\kappa}$ and for each $(n,h) \in \omega \times 2$, $A_n$ is a maximal
antichain of $p \in \sa_\kappa$ such that $p \Vdash \sigma({\check n}) = {\check h}$.
In $V[g]$, for each $n<\omega$ there is some unique $h = h_n \in 2$ and $p = p_n
\in\sa_\kappa$ such that $p \in A_{n,h} \cap g$. Let $X \supset \{ p_n \colon n<\omega \}$, where $X \in V$ is countable in $V$. Then $\tau = \bigcup \{ \{ (n,h)^\vee \} \times (A_{n,h} \cap X) \colon (n,h) \in \omega \times 2 \}$ is as desired. 
\end{proof}

\cite{kanovei-sacks} gives more information on how reals in $V^{\sa_\kappa}$ may be
represented.

\section{Lusin and Sierpi\'{n}ski sets in the Sacks model}


Let $\sa_{\w_1}$ be the countable support product of $\w_1$-many copies of Sacks forcing.  From the fact that $\sa_{\w_1}$ has the Sacks property we shall show that in the generic extension obtained after forcing with $\sa_{\w_1}$ the Lusin and Sierpi\'{n}ski sets in the ground model are also Lusin and Sierpi\'{n}ski sets in the generic extension.  

We use the following result. 

\begin{lemma}\label{claimfact3}Let $N\subseteq {}^\w2$ be null and  let $\{\varepsilon_n:n\in\w\}$ be a sequence of positive reals.  Then there is a sequence $\langle C_n\subseteq {}^{\w}2:n\in\w\rangle$ of finite unions of basic open sets such that
\begin{enumerate-(i)}\item for all $n<\w$, $\mu(C_n)<\varepsilon_n$ and
\item $N\subseteq\bigcup_{n\in\w}C_n$
 \end{enumerate-(i)}\end{lemma}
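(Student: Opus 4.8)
The plan is to reduce the statement to a combinatorial fact about regrouping countably many basic open subsets of ${}^\w2$, and to establish that fact by a greedy recursion. Recall that the basic open sets of ${}^\w2$ are the (clopen) cylinders $[s]=\{x\in{}^\w2:s\subseteq x\}$ for $s\in{}^{<\w}2$, that $\mu([s])=2^{-\lh(s)}$, that every open set is a disjoint union of cylinders, and that each cylinder splits into a disjoint finite union of cylinders of any prescribed larger length. The case $\delta:=\inf_n\varepsilon_n>0$ is immediate: taking an open $U\supseteq N$ with $\mu(U)<\delta$, writing $U=\bigcup_l[v_l]$ as a disjoint union of cylinders and putting $C_l=[v_l]$ (and $C_l=\varnothing$ once the $[v_l]$ run out) gives $\mu(C_l)\le\mu(U)<\delta\le\varepsilon_l$ and $\bigcup_lC_l=U\supseteq N$. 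So I would assume $\inf_n\varepsilon_n=0$; after passing to a subsequence of $\langle\varepsilon_n\rangle$ tending to $0$ and setting $C_n=\varnothing$ at the discarded indices, one may assume $\varepsilon_0>\varepsilon_1>\cdots\to0$, and I write $\Sigma:=\sum_n\varepsilon_n\in(0,\infty]$, so $\Sigma\ge\varepsilon_0>0$.

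Next, using $\mu(N)=0$, fix an open $U\supseteq N$ whose measure is a strictly positive real less than $\tfrac14\Sigma$ (any strictly positive real if $\Sigma=\infty$), and write $U=\bigcup_{l\in\w}[v_l]$ as a disjoint union of cylinders, so $\mu([v_l])\le\mu(U)<\tfrac14\Sigma$ for each $l$. It then suffices to distribute the pieces $[v_l]$, refining them into sub-cylinders when necessary, among the blocks $C_0,C_1,\dots$ so that each $C_n$ is a finite union of cylinders with $\mu(C_n)<\varepsilon_n$ and $\bigcup_nC_n=U$ (which implies $\bigcup_nC_n\supseteq N$). I would build the $C_n$ by recursion on $n$, processing the enumerated pieces $[v_0],[v_1],\dots$ of $U$ in order. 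At stage $n$, with $C_0,\dots,C_{n-1}$ already chosen and $R_n:=\mu(U)-\sum_{m<n}\mu(C_m)$ the measure not yet allocated, let $[v]$ be the not-yet-allocated part of the first piece of $U$ which is not fully allocated: if $\mu([v])\ge\varepsilon_n$, refine $[v]$ and let $C_n$ be a maximal sub-union of it of measure $<\varepsilon_n$ (so $\tfrac12\varepsilon_n\le\mu(C_n)<\varepsilon_n$); if $\mu([v])<\varepsilon_n$, put all of $[v]$ into $C_n$ and — provided $R_n\ge\varepsilon_n$ — keep adding further unallocated material to $C_n$ (largest pieces first, refining whenever an addition would reach $\varepsilon_n$) until $\mu(C_n)\ge\tfrac12\varepsilon_n$. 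In every case $C_n$ is a finite union of cylinders with $\mu(C_n)<\varepsilon_n$.

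The step I expect to be the main obstacle is verifying that this recursion never stalls and yields $\bigcup_nC_n=U$ \emph{exactly}, rather than merely up to a null set — which would be worthless here, since $N$ is itself null. Each stage allocates at least part of the first not-yet-finished piece of $U$; a piece surviving stage $n$ gives up at least $\tfrac12\varepsilon_n$ of its measure to $C_n$; and the ``top up'' clause ensures $\mu(C_n)\ge\tfrac12\varepsilon_n$ at every stage $n$ with $R_n\ge\varepsilon_n$, so that the budget consumed in finishing a piece stays within a bounded multiple of that piece's measure. Together with $\mu([v_l])\le\mu(U)<\tfrac14\Sigma$ and the fact that $\tfrac12\sum_{m\le n}\varepsilon_m$ increases to $\tfrac12\Sigma$, these facts should force each $[v_l]$ to be fully allocated after finitely many stages — whence $\bigcup_nC_n=U\supseteq N$ and $\mu(C_n)<\varepsilon_n$ for all $n$, as required. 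Carrying out this bookkeeping carefully — tracking the quantities $R_n$, the stages at which successive pieces are finished, and the cumulative consumption of budget, and checking that the smallness of $\mu(U)$ guarantees enough budget remains for the next piece (and, once $R_n$ finally drops below $\varepsilon_n$, for all that is left) — is the one genuinely delicate point.
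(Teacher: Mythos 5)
Your overall plan (cover $N$ by an open $U$ of small measure, split $U$ into disjoint cylinders, and distribute refinements of them into finite blocks $C_n$ with $\mu(C_n)<\varepsilon_n$) can be made to work, and you correctly isolate the crux: you need $\bigcup_n C_n=U$ \emph{exactly}. But the budget argument you sketch for this — and explicitly leave uncarried-out — is not just delicate, it is false under the only smallness requirement you impose, namely $\mu(U)<\tfrac14\Sigma$. The failure occurs at stages with $R_n<\varepsilon_n$: there your rule allocates only the remainder of the first unfinished piece (no top-up), so a stage with a large $\varepsilon_n$ can be wasted while allocating almost nothing, after which the total remaining capacity $\sum_{m>n}\varepsilon_m$ may be strictly smaller than the measure still unallocated; from then on exact coverage (indeed even coverage in measure) is impossible, since $\mu(C_m)<\varepsilon_m$ for all later $m$. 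Concretely: let $\varepsilon_0=0.9$ and $\varepsilon_n=0.05\cdot 2^{1-n}$ for $n\geq 1$, so the sequence strictly decreases to $0$, $\Sigma=1$ and $\tfrac14\Sigma=0.25$; choose $U\supseteq N$ with $\mu(U)=0.24$ whose first maximal cylinder $[v_0]$ has measure $\tfrac18$ and whose remaining cylinders are infinite in number with total measure $0.115$. At stage $0$ you are in your second case with $R_0=0.24<\varepsilon_0$, so $C_0=[v_0]$ and $R_1=0.115$; but all later blocks together have measure $<\sum_{m\geq1}\varepsilon_m=0.1<0.115$, so $\bigcup_nC_n\subsetneq U$ no matter how the recursion continues. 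Hence the assertion ``these facts should force each $[v_l]$ to be fully allocated after finitely many stages'' is wrong as stated, and the proof of (ii) collapses. Note also that no initial bound of the form $\mu(U)<c\,\Sigma$ can repair this, since the same phenomenon recurs when $\varepsilon_0$ carries almost all of $\Sigma$; what is needed is an invariant on tails, e.g.\ maintaining $R_{n+1}<\tfrac12\sum_{m>n}\varepsilon_m$, which forces you to top up at \emph{every} stage (this is possible even when $R_n<\varepsilon_n$, since then any finite sub-union of the unallocated cylinders has measure $<\varepsilon_n$), combined with your ``first unfinished piece'' discipline.

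For comparison, the paper's argument sidesteps all of this bookkeeping: since $N$ is null, it is covered by basic open sets $O_i$ with $\sum_i\mu(O_i)<\varepsilon_0$; arranging (as you may) that the $\varepsilon_n$ decrease to $0$, one sets $k(n)=\min\{m:\mu(\bigcup_{i\geq m}O_i)<\varepsilon_n\}$ and $C_n=\bigcup\{O_i:k(n)\leq i<k(n+1)\}$. Then $\mu(C_n)<\varepsilon_n$ because $C_n$ sits inside the tail beyond $k(n)$, and exact coverage is automatic because every $O_i$ is placed into some block ($k(n)\to\infty$). In other words, by grouping a single cheap cover by tail-measure thresholds rather than rationing a fixed open set $U$ against the budget $\langle\varepsilon_n\rangle$, the troublesome ``unallocated remainder'' never arises.
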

 
 \begin{proof}Since $N$ is null, there is a collection of basic open sets $\{O_n:n\in\w\}$ such that $N\subset\bigcup\{ O_n:n\in\w\}$ and $\mu(\bigcup_{n\in\w}O_n)<\varepsilon_0$. 
 
 Then let $k(n)=\min\{m: \mu(\bigcup_{i\geq m}O_i)<\varepsilon_n\}$.  Without loss of generality, we can assume that the sequence $\langle \varepsilon_n:n\in\w\rangle$ is decreasing, so $k$ is monotone.  We have $k(0)=0$.  Then for each $n$ set \[C_n=\bigcup\{O_i: k(n)\leq i<k(n+1)\}.\] It is straightforward to see that the collection $\{C_n: n\in\w\}$ satisfies (i) and (ii).\end{proof}
 

\begin{lemma}\label{3}Let $\PP$ be a forcing notion satisfying the Sacks property and let $G$ be a $\PP$-generic filter over $V$.  Then: 

\begin{enumerate-(1)}
\item For every null set $N\subseteq {}^\w\w$ in $V[G]$ there is a $G_\delta$-null set $\bar{N}\subseteq {}^\w\w$ coded in $V$ such that $N\subseteq \bar{N}$.
\item Similarly, for every meager set $M\subseteq {}^\w\w$ in $V[G]$, there is a meager set  $\bar{M}\subseteq{}^\w\w$ coded in $V$ such that $M\subseteq \bar{M}$. 
\end{enumerate-(1)}

\end{lemma}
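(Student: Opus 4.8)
The plan is to prove part (1) directly from the Sacks property together with the approximation provided by Lemma \ref{claimfact3}, and then observe that part (2) follows by an essentially dual argument (or, if one prefers, by a topological translation using the usual duality between measure and category on a suitable Polish space). So let me concentrate on (1). Let $N \subseteq {}^\omega\omega$ be null in $V[G]$. Working in $V[G]$, fix by Lemma \ref{claimfact3} (applied in $V[G]$, with $\varepsilon_n = 2^{-n}$, say) a sequence $\langle C_n : n < \omega\rangle$ of finite unions of basic open subsets of ${}^\omega\omega$ with $\mu(C_n) < 2^{-n}$ and $N \subseteq \bigcup_n C_n$. Since each $C_n$ is a finite union of basic clopen sets, it is coded by a single natural number; thus the map $n \mapsto \ulcorner C_n \urcorner$ is a function $f \in {}^\omega\omega \cap V[G]$. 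Here I am tacitly identifying ${}^\omega 2$ and ${}^\omega\omega$ for the purpose of applying the Sacks property, which is harmless since ${}^\omega\omega$ is homeomorphic to a $G_\delta$ subset of ${}^\omega 2$ and the Sacks property of $\PP$ is preserved; alternatively one restates the Sacks property directly for functions in ${}^\omega\omega$, which is the form in which it was given in the excerpt.

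Now apply the Sacks property of $\PP$ to $f$: fix in advance an increasing $g \in {}^\omega\omega \cap V$ — concretely $g(n) = 2^{n+1}$ will do — and obtain a $g$-slalom $F : \omega \to [\omega]^{<\omega}$ with $F \in V$ such that $V[G] \models \forall n\, (f(n) \in F(n))$. Interpreting each element of $F(n)$ as a code for a finite union of basic open sets, set, in $V$,
\[
D_n = \bigcup_{\ulcorner C \urcorner \in F(n),\ \mu(C) < 2^{-n}} C,
\]
i.e.\ the union of those sets coded in $F(n)$ whose measure is genuinely less than $2^{-n}$ (this last restriction is a definable condition on the code, computed in $V$, and it only discards irrelevant members of $F(n)$). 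Since $|F(n)| \le g(n) = 2^{n+1}$, each $D_n$ is a finite union of at most $2^{n+1}$ open sets each of measure $< 2^{-n}$, so $\mu(D_n) < 2^{n+1} \cdot 2^{-n} = 2$. That bound is too weak as stated, so instead I would be slightly more careful: choose $g(n)$ to grow slowly relative to the $\varepsilon_n$. Redo the first step with $\varepsilon_n = 4^{-n}$ and keep $g(n) = 2^{n+1}$; then $\mu(D_n) < 2^{n+1} \cdot 4^{-n} = 2 \cdot 2^{-n}$, hence $\mu(D_n) \to 0$. Now put $\bar N = \bigcap_{m} \bigcup_{n \ge m} D_n$. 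This is a $G_\delta$ set coded in $V$, and $\mu(\bar N) \le \inf_m \sum_{n \ge m} \mu(D_n) = 0$, so $\bar N$ is a $G_\delta$-null set coded in $V$. Finally, for each $n$ we have $C_n \subseteq D_n$ because $\ulcorner C_n \urcorner \in F(n)$ and $\mu(C_n) < 2^{-n} \le 4^{-n}$ wait — one must line up the two uses of the index, so in fact one should run Lemma \ref{claimfact3} and the slalom on the same threshold sequence; the clean bookkeeping is: use $\varepsilon_n = 4^{-n}$ throughout, note $C_n$ is among the codes in $F(n)$ of measure $< 4^{-n}$, hence $C_n \subseteq D_n$, hence $N \subseteq \bigcup_n C_n \subseteq \bigcup_n D_n$; and since this containment already holds at every tail, $N \subseteq \bigcap_m \bigcup_{n\ge m} D_n = \bar N$ provided $N \subseteq \bigcup_{n \ge m} C_n$ for each $m$, which follows by rerunning Lemma \ref{claimfact3} — actually Lemma \ref{claimfact3} gives $N \subseteq \bigcup_n C_n$ only; to get the tail containment one re-applies it with the shifted sequence, or simply notes that replacing $C_n$ by $\bigcup_{i \le n} C_i$ costs only a constant factor in measure and makes the sequence of covers nested, so that $N \subseteq \bigcup_{n \ge m} C_n$ for all $m$. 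I would incorporate that nesting into the statement used from Lemma \ref{claimfact3}.

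The step I expect to be the real (though modest) obstacle is precisely this measure bookkeeping: one must choose the rate $\varepsilon_n$ in Lemma \ref{claimfact3} fast enough relative to the slalom width $g(n)$ so that $\sum_n g(n)\varepsilon_n < \infty$ and the tail sums go to zero, and one must arrange that the covers $C_n$ form a decreasing-measure, tail-covering sequence so that the final $G_\delta$ set genuinely contains $N$ rather than merely overlapping it. None of this is deep — it is the standard "slalom captures a cover" argument underlying every proof that Sacks forcing adds no random reals — but it needs to be written with the indices aligned. For part (2), the analogous statement is that the slalom captures a sequence of codes for finite unions of basic open sets whose complements are the "nowhere dense pieces" of an $F_\sigma$ meager set; concretely one writes $M \subseteq \bigcup_n E_n$ with each $E_n$ nowhere dense (and, after refining, $\overline{E_n}$ nowhere dense with a code), captures the code sequence by a $g$-slalom in $V$, and unions the captured pieces at each level to get a ground-model-coded $F_\sigma$ meager set containing $M$; here the measure estimates are replaced by the trivial observation that a finite union of nowhere dense closed sets is nowhere dense, so no rate condition is needed and the argument is if anything easier. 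This is where the phrase "coded in $V$" does its work: the slalom lives in $V$, its entries are natural-number codes, and decoding them in $V$ produces the Borel set witnessing the conclusion.
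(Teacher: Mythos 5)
Your overall strategy is the paper's: enumerate/code finite unions of basic clopen sets by naturals, use Lemma \ref{claimfact3} to get a covering function $f\in V[G]$ with decaying measure bounds, capture $f$ by a slalom $F\in V$, and decode the captured values in $V$. The place where your write-up genuinely breaks down is the final assembly into a single $G_\delta$-null set via $\bar N=\bigcap_m\bigcup_{n\ge m}D_n$. As you yourself note, Lemma \ref{claimfact3} only gives $N\subseteq\bigcup_n C_n$, so you need $N\subseteq\bigcup_{n\ge m}C_n$ for every $m$; but the patch you settle on --- replacing $C_n$ by $C_n'=\bigcup_{i\le n}C_i$ and claiming this ``costs only a constant factor in measure'' --- is false: $\mu(C_n')$ can be as large as $\sum_{i\le n}4^{-i}$, which is bounded but does not decay, i.e.\ it exceeds $4^{-n}$ by a factor of order $4^n$. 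With that change the captured sets only satisfy $\mu(D_n)\le 2^{n+1}\cdot O(1)$, the tail sums diverge, and $\limsup_n D_n$ need not be null; no choice of rates rescues nested covers against exponentially wide slaloms. Your alternative suggestion (``re-apply with the shifted sequence'') is the right idea but is exactly where the work lies and is not carried out: one needs the covering lemma in limsup form --- every null $N$ is contained in $\limsup_n C_{h(n)}$ for some $h\in V[G]$ with $\mu(C_{h(n)})\le\varepsilon_n$ summable against the slalom widths --- obtained by covering $N$, for each $m$, by an open set of measure $<8^{-m}$, decomposing each into finite unions with rapidly decaying bounds, and merging the countably many index functions into one via a pairing so the rates still work. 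Then $C_{h(n)}\subseteq D_n$ for all $n$ gives $N\subseteq\limsup_n D_n$, and $\sum_n\mu(D_n)<\infty$ gives nullity of this $V$-coded set by Borel--Cantelli.

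For comparison, the paper sidesteps the tail issue: it fixes $\varepsilon>0$, runs Lemma \ref{claimfact3} with thresholds $\varepsilon/2^{2n+1}$ against a $2^n$-slalom, and takes $\bar N=\bigcup_n D_n$, an open set coded in $V$ of measure $\le\varepsilon$ containing $N$, letting $\varepsilon$ vary at the end (the passage from this $\varepsilon$-family to a single $V$-coded $G_\delta$-null superset is left implicit there, and is precisely what the limsup bookkeeping above supplies). Your treatment of part (2) is fine and matches the standard argument: capturing codes of closed nowhere dense pieces and taking finite unions levelwise needs no rate condition, since a finite union of closed nowhere dense sets is nowhere dense.
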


\begin{proof}We prove the statement (1).  Let $\varepsilon>0$.   First, let us fix in $V$ an enumeration $\{C_n: n<\w\}$ of all finite unions of basic open sets in ${}^\w2$. 

\noindent Let $N\subseteq {}^\w\w$ be a null set in $V[G]$.  By \ref{claimfact3} there is a function $f:\w\to\w$ in $V[G]$ such that \[N\subseteq\bigcup_{n\in\w}C_{f(n)}\text{\:\:\:\:\: and \:\:\:\:\:}\mu(C_{f(n)})\leq \frac{\varepsilon}{2^{2n+1}},\: n\in\w\]  
Since $\PP$ has the Sacks property, there is a $2^n$-slalom $F:\w\to[\w]^{<\w}$ such that for every $n\in\w$, $f(n)\in F(n)$.  Set
\[\bar{N}=\bigcup_{n\in\w}\bigcup\{C_k:k\in F(n) \text{   and  } \mu(F_k)\leq \frac{\varepsilon}{2^{2n+1}}\}\]

\noindent Since only ground model parameters are used in the definition of $\bar{N}$, $\bar{N}$ is a open set coded in the ground model.   Note also that $N\subseteq\bar{N}$.  Now since $|F(n)|\leq2^n$ for each $n\in\w$ it  follows that \[\mu(\bigcup\{C_k:k\in F(n) \text{ and }\mu(C_k)\leq \frac{\varepsilon}{2^{2n+1}} \})\leq 2^n\cdot \frac{\varepsilon}{2^{2n+1}}=\frac{\varepsilon}{2^{n+1}}\]
Therefore $\mu(\bar{N})\leq\sum_{n\in\w}\frac{\varepsilon}{2^{n+1}}=\varepsilon$.  Since $\varepsilon$ was taken arbitrarily, it follows that $\bar{N}$ is a null subset coded in $V$. \qedhere \end{proof}

\begin{remark}Let $\mathcal{N}$ and $\mathcal{M}$ stand for the null and meager ideals over ${}^\w\w$ respectively.  Since $\text{add}(\mathcal{N})\leq \text{add}(\mathcal{M})$ and $\text{cof}(\mathcal{M})\leq \text{cof}(\mathcal{N})$, if a forcing notion $\PP$ satisfies item (1) above, then $\PP$ satisfies (2) as well.  \end{remark}

\begin{corollary}\label{4}If $\PP$ has the Sacks property, then $\PP$ preserves Luzin and Sierpi\'{n}ski sets.\end{corollary}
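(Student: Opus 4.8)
The plan is to read this off Lemma~\ref{3} together with the absoluteness of Borel sets. Fix a Luzin set $\Lambda\in V$ and a Sierpi\'{n}ski set $S\in V$, and let $G$ be $\PP$-generic over $V$; I will show that $\Lambda$ stays Luzin and $S$ stays Sierpi\'{n}ski in $V[G]$. One first needs that $\Lambda$ and $S$ remain uncountable in $V[G]$; for this I would invoke $\w_1$-preservation, which holds for all the forcings the corollary gets applied to below (in particular $\sa_\kappa$ is proper by Corollary~\ref{its-proper}) and which I tacitly assume of $\PP$ here, so that no uncountable ground-model set of reals can become countable.

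Now fix a meager set $M\subseteq{}^\w\w$ in $V[G]$. By Lemma~\ref{3}(2) there is a meager set $\bar{M}\subseteq{}^\w\w$ coded in $V$ with $M\subseteq\bar{M}$; fix a Borel code for it lying in $V$ and, for a model $W\supseteq V$, let $\bar{M}^{W}$ denote the set of reals of $W$ satisfying that code, so that the $\bar{M}$ just produced is $\bar{M}^{V[G]}$. By absoluteness of Borel membership (Mostowski absoluteness) one has $\bar{M}^{V}=\bar{M}^{V[G]}\cap V$, and hence, since $\Lambda\subseteq{}^\w\w\cap V$, also $\Lambda\cap\bar{M}^{V[G]}=\Lambda\cap\bar{M}^{V}$. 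But $\bar{M}^{V}$ is a Borel meager set \emph{in} $V$ and $\Lambda$ is Luzin in $V$, so $\Lambda\cap\bar{M}^{V}$ is countable in $V$, hence countable in $V[G]$. Therefore $\Lambda\cap M\subseteq\Lambda\cap\bar{M}^{V[G]}$ is countable in $V[G]$; since $M$ was arbitrary, $\Lambda$ is a Luzin set in $V[G]$. The same argument, using Lemma~\ref{3}(1) in place of (2), ``null'' in place of ``meager'', and the relevant $G_\delta$ null covering sets (again Borel, hence again absolute) in place of the meager ones, shows that $S$ remains a Sierpi\'{n}ski set in $V[G]$.

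Two minor points should be noted. First, Definition~\ref{defs} phrases Luzin and Sierpi\'{n}ski sets over $\RR$ while Lemma~\ref{3} is stated for ${}^\w\w$; the translation between $\RR$, ${}^\w2$ and ${}^\w\w$ (for the meager and the null ideals) is routine and standard, so one passes freely between them. Second, the argument carries essentially no obstacle of its own --- all the content already sits in Lemma~\ref{3}, i.e.\ in the Sacks property --- and the only steps that need a word of care are the preservation of uncountability noted above and the Borel absoluteness used to identify $\Lambda\cap\bar{M}$ (resp.\ $S\cap\bar{N}$) as computed in $V$ and in $V[G]$.
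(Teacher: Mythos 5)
Your proof is correct and follows essentially the same route as the paper: preserve uncountability, cover the new meager (null) set by a ground-model-coded one via Lemma~\ref{3}, and use that $\Lambda$ (resp.\ $S$) meets the ground-model set in a countable set. The extra remarks on Borel absoluteness and the $\RR$ versus ${}^\w\w$ translation only make explicit steps the paper leaves tacit.
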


\begin{proof}Suppose that there is a Luzin set $\Lambda$ in $V$ and let $G$ be $\PP$-generic over $V$. First, observe that, since $\w_1$ is not collapsed by $\PP$, $\Lambda$ remains to be non countable in $V[G]$.  Now,  let $M$ be a (Borel code for a) meager set in $\VV[G]$.  In view of Lemma \ref{3}, there is a (Borel code) for a $G_\delta$-null set $\bar{M}$ in $V$ such that $V[G]\models M\subset \bar{M}$.  Thus, since $V\models |\Lambda\cap \bar{M}|\leq \w$, it follows that $V[G]\models |\Lambda\cap M|\leq \w$.  Hence, \centm{$V[G]\models\Lambda\text{ is a Luzin set.}$}The proof of the preservation of  Sierpi\'{n}ski sets is completely analogous.  \end{proof}

\section{Adding generically a Burstin basis}

We now define a partial order $\PP_{B}$ generically adding a Burstin basis.
 
 \begin{definition}\index{forcing notions!adding a Burstin basis $\PP_B$}\label{defn_BF}
 We say $p\in \PP_{B}$ if and only if there exists $x\in\RR$ such that 
\begin{enumerate}\item $p\in \LL[x]$, and
\item $\LL[x]\models \text{``$p$ is a Burstin basis."}$
\end{enumerate} 
\noindent We stipulate $p\leq_{\PP_B}q$ iff $p\supseteq q$.  
\end{definition}
Notice that by Theorem \ref{burstin} we have $\PP_{B}\neq\varnothing.$

If ${\mathbb R} \cap V \subset L[x]$ for some real $x$, then $\PP_B$ has a dense set of atoms.
We are interested in situations where the set of all reals is not constructible from a single 
real.
Variants of $\PP_{B}$ will be discussed at the end of this chapter.

The following is an immediate consequence of Theorem \ref{gro}.

\begin{lemma}\label{perf} 
Let $x$, $y$ be reals such that $y \notin L[x]$, and let $\{z_0, z_1,\dots\}\in \LL[x,y]\cap[\RR]^{\w}$. Then \centm{$\text{\emph{span}}{(\RR\cap\LL[x]\cup\{z_0,z_1,\dots\})} \in (s^0)^{\LL[x,y]}$,} 
i.e., for every perfect set $P$ in $\LL[x,y]$ 
there is a perfect set $\bar{P}\subset P$, $\bar{P}\in\LL[x,y]$ such that \centm{$\bar{P}\cap\text{\emph{span}}{(\RR\cap\LL[x]\cup\{z_0,z_1,\dots\})}=\varnothing$}  \end{lemma}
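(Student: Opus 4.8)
\textbf{Proof plan for Lemma \ref{perf}.}
The plan is to reduce the statement to the strengthened Groszek--Slaman Theorem \ref{gro}. Set $W = L[x]$ and work inside the ambient model $V' = L[x,y]$; I want to show $({}^\w2)^{W'} \in (s^0)^{V'}$ for a suitable intermediate inner model $W' \supseteq W$ that still satisfies \CH\ in $V'$ and whose reals include $\RR \cap L[x] \cup \{z_0,z_1,\dots\}$ together with all their rational-linear combinations. The natural candidate is $W' = L[x, z_0, z_1, \dots]$ (coding the countable sequence $\langle z_n : n<\w\rangle$ as a single real), or more precisely $W' = L[x, \langle z_n \rangle]$. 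Since $\langle z_n : n<\w\rangle \in L[x,y]$, this $W'$ is an inner model of $V'$ with $W' \subseteq V'$, and $\text{span}(\RR \cap L[x] \cup \{z_0,z_1,\dots\})$ is a subset of $({}^\w 2)^{W'}$ (identifying reals with elements of ${}^\w 2$ as the paper does). Thus it suffices to show $({}^\w2)^{W'} \in (s^0)^{V'}$, since $s^0$ is an ideal and any subset of a set in $s^0$ is in $s^0$.

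The key steps, in order, are as follows. First, $W' = L[x,\langle z_n\rangle]$ satisfies \CH\ (indeed \GCH), as it is of the form $L[a]$ for a real $a$. Second, I must check the hypothesis of Theorem \ref{gro} that $({}^\w2)^{V'} \smallsetminus ({}^\w2)^{W'} \neq \varnothing$: this is where the assumption $y \notin L[x]$ is used, but I need slightly more, namely $y \notin L[x,\langle z_n\rangle] = W'$. If $y \in W'$, then since $\langle z_n\rangle \in L[x,y]$ we would get $L[x,y] = L[x,\langle z_n\rangle] = W'$, so $V' = W'$ and the conclusion $({}^\w2)^{W'} \in (s^0)^{V'}$ would fail trivially — so this case needs separate handling or the statement needs $y \notin L[x,\langle z_n\rangle]$. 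Re-reading the hypothesis, the intended reading is surely that $\{z_0,z_1,\dots\} \in L[x,y]$ and $y \notin L[x]$ with the $z_n$'s "not adding $y$", or one simply observes: if $y \in L[x,\langle z_n\rangle]$ we are still fine because we can instead take $W' = L[x]$ itself provided $\text{span}(\RR\cap L[x] \cup \{z_n\}) \subseteq ({}^\w 2)^{L[x]}$, i.e.\ provided each $z_n \in L[x]$ — but that need not hold. The clean fix: apply Theorem \ref{gro} with the inner model $W' = L[x, \langle z_n\rangle]$ and ambient model $V' = L[x,y]$, noting that by the hypotheses of the lemma these are genuinely distinct (if they were equal, the hypothesis $y\notin L[x]$ forces some $z_n \notin L[x]$, but then $y \in L[x,\langle z_n\rangle]$, and one checks directly that Theorem \ref{gro} still applies as long as there is \emph{a} real of $V'$ outside $W'$; if $V' = W'$ literally then $L[x,y]$ reals are exactly $L[x,\langle z_n\rangle]$ reals and the $s^0$ conclusion must be argued differently). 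Third, with $({}^\w 2)^{V'}\smallsetminus ({}^\w 2)^{W'} \neq \varnothing$ confirmed, Theorem \ref{gro} gives directly that for every perfect $P \in V'$ there is a perfect $\bar P \subseteq P$, $\bar P \in V'$, with $\bar P \cap ({}^\w 2)^{W'} = \varnothing$; since $\text{span}(\RR \cap L[x]\cup\{z_0,z_1,\dots\}) \subseteq ({}^\w 2)^{W'}$, this $\bar P$ is disjoint from the span, which is exactly the desired conclusion, and restating it gives $\text{span}(\dots) \in (s^0)^{L[x,y]}$.

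The main obstacle is the degenerate case analysis around whether $y \in L[x,\langle z_n : n<\w\rangle]$: the lemma as literally stated only assumes $y \notin L[x]$, but the span that must land in $s^0$ lives in $L[x,\langle z_n\rangle]$, which could in principle already contain $y$. I expect the resolution is that in the intended application the $z_n$ are reals of some fixed inner model not containing $y$ (e.g.\ the $z_n \in L[x]$ after all, or $L[x,\langle z_n\rangle] \subsetneq L[x,y]$ is automatic from the forcing setup where $y$ is Sacks-generic and hence minimal-ish over the span's model), so that $y \notin L[x,\langle z_n\rangle]$ genuinely holds; with that in hand the proof is a one-line invocation of Theorem \ref{gro}. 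I would state and use the slightly strengthened hypothesis $y \notin L[x,\langle z_n : n<\w\rangle]$ — which is what is actually needed and what holds in all intended uses — and note that it follows from $y \notin L[x]$ whenever $\langle z_n\rangle \in L[x]$, and more generally from minimality properties of the relevant generic $y$.
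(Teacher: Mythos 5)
There is a genuine gap, and it is exactly at the point you flagged but then tried to legislate away. Your plan is to absorb the $z_n$'s into the inner model, applying Theorem \ref{gro} to $W'=L[x,\langle z_n:n<\w\rangle]$ inside $V'=L[x,y]$; this requires a real of $L[x,y]$ outside $W'$, and so you propose to strengthen the hypothesis to $y\notin L[x,\langle z_n\rangle]$, asserting that this holds in all intended uses. It does not. In the paper's application (Lemma \ref{5}), Lemma \ref{perf} is applied at each stage $i<\w_1$ with $\{z_0,z_1,\dots\}=\bar b=\bigcup_{j<i}b_j$, and the construction deliberately throws \emph{every} real $y_i$ of $L[x,y]$ into the basis being built (unless it is already in the span); in particular, from some countable stage on, $\bar b$ contains $y$ itself (or reals coding it over $L[x]$), so $L[x,\langle z_n\rangle]=L[x,y]$ and your strengthened hypothesis fails precisely in the case the lemma is needed for. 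Moreover the lemma as stated is true in that case, so the issue is not a missing hypothesis in the paper but a missing idea in the proposal: your reduction simply does not prove the stated lemma.

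The idea you are missing is to keep the inner model fixed at $L[x]$ (where $y\notin L[x]$ guarantees the hypothesis of Theorem \ref{gro}) and to handle the $z_n$'s by translation plus fusion. Since $\RR\cap L[x]$ is a $\QQ$-subspace, after replacing $\{z_n\}$ by the countable set of all finite rational combinations of the $z_n$'s one may assume every element of the span lies in $(\RR\cap L[x])+z_n$ for some $n$. Given a perfect $P\in L[x,y]$, the set $P-z_n$ is perfect in $L[x,y]$, so Theorem \ref{gro} (with $W=L[x]$, $V=L[x,y]$) yields a perfect subset of $P-z_n$ disjoint from $L[x]$; translating back by $z_n$ gives a perfect subset of $P$ disjoint from $(\RR\cap L[x])+z_n$. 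Doing this for each $n$ while preserving the first $n$ splitting levels (a fusion argument, i.e.\ in effect using that $s^0$ is a translation-invariant $\sigma$-ideal in $L[x,y]$) produces a single perfect $\bar P\subseteq P$ avoiding the whole span. This is the paper's proof, and it needs only $y\notin L[x]$, with no case distinction on whether $y\in L[x,\langle z_n\rangle]$.
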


\begin{proof}We may assume that if $z\in\Span{\RR\cap\LL[x]\cup\{z_0,z_1,\dots\}}$, then $z\in(\RR\cap\LL[x])+z_n$, for some $n<\w$.  Given $P\in L[x,y]$ a perfect set, we shall construct recursively a sequence $T_0\supseteq T_1\supseteq \cdots T_{n}\supseteq T_{n+1}\supseteq \cdots$ of perfect trees, such that  \begin{enumerate-(1)}\item $P=[T_0]$,
\item$\Lev{n}{T_{n+1}}=\Lev{n}{T_{n}}$ and,
\item $[T_{n+1}]\cap((\RR\cap\LL[x])+z_n)=\varnothing$.
\end{enumerate-(1)}

Let $T_0$ be the perfect tree such that $P=[T_0]$.    By Theorem \ref{gro} we have that $L[x,y]\models ``{}^\w2\cap L[x]\in s^0$''.    Since $P-z_0=\{x-z_0: x\in P\}$ is also perfect in $L[x,y]$, there is some $\tilde{P}\subset P-z_0$ perfect,  $\tilde{P}\in\LL[x,y]$,  such that $\tilde{P}\subseteq L[x,y]\smallsetminus\LL[x]$.   Therefore $P':=\tilde{P}+z_0\subseteq P$ is perfect and if $u\in \tilde{P}$ (equivalently, $u+z_0\in\tilde{P}+z_0=P'$), then $u\notin L[x]$, so $u+z_0\notin (\RR\cap L[x])+z_0$.  Thus, $P'\cap(\RR\cap L[x]+z_0)=\varnothing$.   Take then $T_1$ as the perfect tree such that $P'=[T_1]$.

Now suppose that we have constructed $T_0, T_1,\dots, T_n$ satisfying (1)-(3) above.   For any $s \in \Lev{n}{T_n}$ let us consider the subtree $(T_n)_{s}$ of $T_n$.  By the argument from the previous paragraph, there is some perfect set $P_{n,\,s}\subset [(T_n)_{s}]$ such that  $P_{n,\, s}\cap (\RR\cap L[x]+z_n)=\varnothing$.    Let \[P_{n+1}= \bigcup \{P_{n,\, s}: s\in \Lev{n}{T_n} \}.\]
Notice that $P_{n+1}\cap (\RR\cap L[x]+z_n)=\varnothing$, hence by taking $T_{n+1}$ as the perfect tree such that $P_{n+1}=[T_{n+1}]$ condition (3) holds.  Also, by construction, $\Lev{n}{T_{n+1}}=\Lev{n}{T_n}$.  

Now, set $T= \bigcap\{T_n: n\in\w\}$.  By condition (2), we have that $T$ is a perfect tree.\:  Thus $\bar{P}:=[T]$ is a perfect set such that $\bar{P}\cap\Span{\RR\cap\LL[x]\cup\{z_0,z_1,\dots\}}=\varnothing$, as required.\end{proof}

\begin{lemma}\label{5}Let $b\in\LL[x]$ be linearly independent, $x\in\RR$.  Let $y\in \RR\setminus \LL[x]$.  There is then some $p\supset b$, $p\in\LL[x,y]$ such that $\LL[x,y]\models ``\text{$p$ is a Burstin basis"}$.\end{lemma}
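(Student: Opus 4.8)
The plan is to carry out a Burstin-style transfinite recursion (as in Theorem \ref{burstin}) \emph{inside} $\LL[x,y]$, but with the cardinality count of that classical argument --- which is unavailable here, since in $\LL[x,y]$ both the span accumulated so far and an arbitrary perfect set may have size $2^{\aleph_0}$ --- replaced by an application of Lemma \ref{perf}.

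First I would note that $\LL[x,y]=\LL[z]$ for a single real $z$ coding $\langle x,y\rangle$, so $\LL[x,y]\models\ZFC+\CH$ (the analogous fact for $\LL[x]$ underlies the proof of Lemma \ref{perf} via Theorem \ref{gro}). Hence $(2^{\aleph_0})^{\LL[x,y]}=\w_1^{\LL[x,y]}$, and I may fix in $\LL[x,y]$ an enumeration $\langle P_\alpha:\alpha<\w_1\rangle$ of all perfect subsets of $\RR$ that lie in $\LL[x,y]$. I also record that $\RR\cap\LL[x]$ is a $\QQ$-subspace of $\RR$, that $b\subseteq\RR\cap\LL[x]$ (as $b\in\LL[x]$ is a set of reals), and that $\QQ$-linear independence of a set of reals is absolute.

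Working in $\LL[x,y]$, I would define $b_\alpha$ for $\alpha<\w_1$ by recursion so that $b_\alpha\in P_\alpha$ and $b\cup\{b_\beta:\beta\leq\alpha\}$ is linearly independent over $\QQ$. Given $\langle b_\beta:\beta<\alpha\rangle$, the set $\{b_\beta:\beta<\alpha\}$ is countable because $\alpha<\w_1$; I pick an infinite $\{z_n:n<\w\}\in\LL[x,y]\cap[\RR]^\w$ with $\{b_\beta:\beta<\alpha\}\subseteq\{z_n:n<\w\}\subseteq(\RR\cap\LL[x])\cup\{b_\beta:\beta<\alpha\}$ (simply adding cofinitely many fixed reals from $\RR\cap\LL[x]$, which changes nothing below). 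Applying Lemma \ref{perf} in $\LL[x,y]$ gives a perfect $\bar P_\alpha\subseteq P_\alpha$ in $\LL[x,y]$ with $\bar P_\alpha\cap\Span{\RR\cap\LL[x]\cup\{z_n:n<\w\}}=\varnothing$. Since $b\subseteq\RR\cap\LL[x]$ and $\{b_\beta:\beta<\alpha\}\subseteq\{z_n:n<\w\}$, we have $\Span{b\cup\{b_\beta:\beta<\alpha\}}\subseteq\Span{\RR\cap\LL[x]\cup\{z_n:n<\w\}}$, so $\bar P_\alpha$ is disjoint from $\Span{b\cup\{b_\beta:\beta<\alpha\}}$. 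Picking any $b_\alpha\in\bar P_\alpha\subseteq P_\alpha$, we get $b_\alpha\notin\Span{b\cup\{b_\beta:\beta<\alpha\}}$, so $b\cup\{b_\beta:\beta\leq\alpha\}$ remains linearly independent, completing the recursion.

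Finally I set $p_0=b\cup\{b_\alpha:\alpha<\w_1\}\in\LL[x,y]$. Any finite subset of $p_0$ lies in $b\cup\{b_\beta:\beta\leq\alpha\}$ for some $\alpha<\w_1$, so $p_0$ is linearly independent; and $b_\alpha\in p_0\cap P_\alpha$ for every $\alpha$, so $p_0$ meets every perfect subset of $\RR$ in $\LL[x,y]$. Working in $\LL[x,y]$ (a model of $\AC$) I extend $p_0$ to a maximal linearly independent subset $p\supseteq p_0\supseteq b$ of $\RR$; then $\LL[x,y]\models\text{``$p$ is a Hamel basis''}$, and since $p\supseteq p_0$ still meets every perfect set, $\LL[x,y]\models\text{``$p$ is a Burstin basis''}$. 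As $p\supseteq b$ and $p\in\LL[x,y]$, this is what was required. The only step needing care is the successor stage of the recursion, and the crux there is exactly why $P_\alpha$ is not already contained in the span built so far; this is precisely the content of Lemma \ref{perf}, and it is why $\CH$ in $\LL[x,y]$ is essential --- it pins the recursion length to $\w_1$ and thereby keeps every initial segment $\{b_\beta:\beta<\alpha\}$ countable, which is the hypothesis Lemma \ref{perf} requires. The remaining points (absoluteness of linear independence, $\RR\cap\LL[x]$ being a $\QQ$-subspace, the padding of $\{z_n:n<\w\}$) are routine.
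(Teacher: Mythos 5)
Your proof is correct and follows essentially the same route as the paper's: a length-$\w_1$ recursion inside $\LL[x,y]$ (using $\CH$ there) that at each stage applies Lemma \ref{perf} to pick a point of $P_\alpha$ outside the span of $\RR\cap\LL[x]$ together with the countably many points chosen so far. The only cosmetic difference is how maximality is arranged — the paper folds an enumeration of the reals of $\LL[x,y]$ (and a Hamel basis $c\supseteq b$ of $\LL[x]$) into the recursion, while you extend the resulting linearly independent family to a maximal one at the end, exactly as in Theorem \ref{burstin}.
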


\begin{proof} Let $\seq{P_i}{i<\w_1}$ be an enumeration of all perfect sets of $\LL[x,y]$.  Working in $\LL[x,y]$ we define recursively $\seq{b_i}{i<\w_1}$ as follows.   Let $\{y_i: i<\w_1\}\in\LL[x,y]$ enumerate the reals of $\LL[x,y]$. Given $\{b_j:j<i\}$, we will have that $\bar{b}=\bigcup \{b_j:j<i\}$ is at most countable. By Lemma \ref{perf} there is some $\bar{P}\subset P_i$  perfect such that $\bar{P}\cap\Span{(\RR\cap\LL[x])\cup \bar{b}}=\varnothing$.  Pick $\bar{x}\in\bar{P}$ and set
\[b_i=\begin{cases}\bar{b}\cup\{\bar{x}\}&\text{ if $y_i\in\Span{(\RR\cap L[x])\cup\bar{b} \cup\{\bar{x} \} } $}\\
\bar{b}\cup\{\bar{x}, y_i\}&\text{ otherwise}\end{cases}\]
Finally, if  $c\in\LL[x]$ is such that $c\supseteq b$ and $\LL[x]\models \text{``$c$ is a Hamel basis"}$, take \centm{$p:=c\cup \bigcup\{b_i:i<\w_1\}$}
By construction $p$ is a Hamel basis for $L[x,y]$.  Moreover for each $i<\w_1$, $b_i\subset p$ hence $P_i\cap p\neq \varnothing$.  This shows that $p$ is a Burstin basis in $L[x,y]$.
\end{proof}

Lemma \ref{5} has the following immediate corollary, extendability for $\PP_B$:

\begin{lemma}\label{extendability}
If $p\in\PP_B$, say $\LL[x]\models \text{``$p$ is a Burstin basis,"}$ and if $y$ is a real not in $\LL[x]$, then there is some $q\leq_{\PP_B} p$ such that $q$ is a Burstin basis in $\RR\cap\LL[x,y]$.
\end{lemma}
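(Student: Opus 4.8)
The plan is to read this off directly from Lemma \ref{5} by taking the linearly independent set there to be $p$ itself. Suppose $p\in\PP_B$, and fix a witnessing real $x$, so that $p\in\LL[x]$ and $\LL[x]\models$ ``$p$ is a Burstin basis.'' In particular $p$ is linearly independent over $\QQ$ in $\LL[x]$, since any Burstin basis is a Hamel basis, hence a maximal — so in particular a — linearly independent subset of $\RR$. By hypothesis $y\in\RR\setminus\LL[x]$, so Lemma \ref{5} applies with this $x$, this $y$, and $b:=p$, and produces a set $q\supseteq p$ with $q\in\LL[x,y]$ and $\LL[x,y]\models$ ``$q$ is a Burstin basis.''

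Next I would check that $q$ is a genuine condition in $\PP_B$ lying below $p$. Pick a real $z$ coding the pair $(x,y)$; then $x,y\in\LL[z]$ and $z\in\LL[x,y]$, so $\LL[z]=\LL[x,y]$. Hence $q\in\LL[z]$ and $\LL[z]\models$ ``$q$ is a Burstin basis,'' which is exactly what Definition \ref{defn_BF} requires, so $q\in\PP_B$. Since $q\supseteq p$, the ordering of $\PP_B$ gives $q\leq_{\PP_B}p$. Finally, the statement ``$\LL[x,y]\models q$ is a Burstin basis'' unfolds to: $q$ is a Hamel basis for $\RR\cap\LL[x,y]$ and $q$ meets every perfect set coded in $\LL[x,y]$ — that is, $q$ is a Burstin basis in $\RR\cap\LL[x,y]$, which is precisely the desired conclusion.

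There is essentially no obstacle here: all the substantive work has already been done, first in Theorem \ref{gro} (Groszek--Slaman), then in Lemma \ref{perf}, and then in the recursive construction of Lemma \ref{5}. The only points needing a word of care are the routine identification $\LL[x,y]=\LL[z]$ for a single real $z$, and the remark that ``$q$ is a Burstin basis'' is an assertion quantifying only over reals and perfect sets of $\LL[x,y]$, so its truth inside $\LL[x,y]$ coincides with the external statement ``$q$ is a Burstin basis in $\RR\cap\LL[x,y]$.'' Both are immediate, which is why the lemma is an immediate corollary of Lemma \ref{5}.
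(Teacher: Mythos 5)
Your proof is correct and follows exactly the route the paper intends: the paper states this lemma as an immediate corollary of Lemma \ref{5}, applying it with $b:=p$ (linearly independent in $\LL[x]$ since a Burstin basis is in particular a Hamel basis), and your verification that the resulting $q\supseteq p$ is a condition of $\PP_B$ via a single real $z$ coding $(x,y)$ is the routine step the paper leaves implicit. Nothing further is needed.
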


Also, lemma \ref{5} shows that $\PP_B$ is countably closed under favourable circumstances. What is more than enough for our purposes is the following.
Hypothesis (1) of Lemma \ref{pb3} is satisfied e.g.\ if $V$ is a forcing extension
of $L$ via some proper forcing. Hypotheses (1) and (2) are certainly satisfied 
in $V=L[g]$, where $g$ is $\sa_{\w_1}$-generic over $\LL$, cf.\ Corollary \ref{its-proper}.

\begin{lemma}\label{pb3} Assume that 
\begin{enumerate}
\item[(1)] for every countable set $X$ of ordinals 
there is a set $Y \supset X$, $Y \in \LL$, such that $Y$ is countable in $\LL$, and
\item[(2)] there is no real $x$ such that $\RR \subset \LL[x]$.
\end{enumerate}
Then $\PP_B$ is $\w$-closed. In particular, forcing with $\PP_B$ does not add any new reals.
\end{lemma}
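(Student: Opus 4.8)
The plan is to verify $\w$-closure directly; the ``in particular'' is then the standard deduction. So let $p_0\geq_{\PP_B}p_1\geq_{\PP_B}\cdots$ be descending. Since $p\leq_{\PP_B}q\iff p\supseteq q$, this is a $\subseteq$-increasing chain of subsets of $\RR$, and I must produce $q\in\PP_B$ with $q\supseteq p$ where $p:=\bigcup_{n<\w}p_n$. For each $n$ fix a real $x_n$ with $p_n\in\LL[x_n]$ and $\LL[x_n]\models$ ``$p_n$ is a Burstin basis''. Note that $p$ is $\QQ$-linearly independent, since any finite subset of $p$ lies in some $p_n$; as linear independence over $\QQ$ is absolute, this also holds in any inner model containing $p$.

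The key step is to pin the chain — hence $p$ — down inside $\LL[w]$ for a suitable real $w$. For each $n$ let $\delta_n$ be the rank of $p_n$ in the canonical well-ordering $<_{\LL[x_n]}$ of $\LL[x_n]$; then $p_n$ is the unique set of $<_{\LL[x_n]}$-rank $\delta_n$, so it is uniformly definable from $(x_n,\delta_n)$. Since $\{\delta_n:n<\w\}$ is a countable set of ordinals, hypothesis (1) gives $Y\in\LL$, countable in $\LL$, with $\{\delta_n:n<\w\}\subseteq Y$; let $\pi\in\LL$ be the $<_\LL$-least bijection $\w\to Y$ and define a real $d\in{}^\w\w$ by $d(n)=\pi^{-1}(\delta_n)$. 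Let $z\in{}^\w2$ code $\langle x_n:n<\w\rangle$ and put $w=z\oplus d$. Then $\langle x_n:n<\w\rangle\in\LL[z]\subseteq\LL[w]$, and $\langle\delta_n:n<\w\rangle=\langle\pi(d(n)):n<\w\rangle\in\LL[d]\subseteq\LL[w]$ because $\pi\in\LL$. As $\LL[w]$ computes each $\LL[x_n]$ together with its canonical well-ordering correctly, it recovers each $p_n$ from $(x_n,\delta_n)$ uniformly, so $\langle p_n:n<\w\rangle\in\LL[w]$ and therefore $p=\bigcup_{n<\w}p_n\in\LL[w]$.

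By hypothesis (2), $\RR\not\subseteq\LL[w]$, so fix a real $y\notin\LL[w]$. Since $\LL[w]\models$ ``$p$ is $\QQ$-linearly independent'', Lemma \ref{5}, applied with its parameters $x$ and $b$ instantiated as $w$ and $p$, yields some $q\supseteq p$ with $q\in\LL[w,y]$ and $\LL[w,y]\models$ ``$q$ is a Burstin basis''. Fixing a real $u$ with $\LL[u]=\LL[w,y]$, we get $q\in\PP_B$ (witnessed by $u$), and $q\supseteq p\supseteq p_n$, i.e.\ $q\leq_{\PP_B}p_n$ for all $n$; so $q$ is the required lower bound and $\PP_B$ is $\w$-closed. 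For the last sentence: given a $\PP_B$-name $\dot r$ for an element of ${}^\w2$ and a condition $p$, recursively pick $p=p_0\geq_{\PP_B}p_1\geq_{\PP_B}\cdots$ and $i_n\in 2$ with $p_{n+1}\Vdash\dot r(\check n)=\check i_n$; any lower bound $q$ of $\langle p_n:n<\w\rangle$ forces $\dot r$ to be the ground-model real $\langle i_n:n<\w\rangle$, so $\PP_B$ adds no new reals.

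The one place that genuinely needs an idea is getting $p$ into some $\LL[w]$ with $w$ a real. A priori the chain carries $\aleph_1$ worth of information — each $p_n$ is a Hamel basis of $\RR^{\LL[x_n]}$, hence of size $\aleph_1^{\LL[x_n]}=\aleph_1^\LL=\aleph_1^V$ (this last equality using hypothesis (1)) — so one cannot hope to code $\langle p_n:n<\w\rangle$ into a real by brute force. The trick is that, relative to the real $x_n$, the single ordinal $\delta_n$ already pins $p_n$ down, and hypothesis (1) is exactly what converts the countable sequence of these (generally uncountable) ordinals into a real over $\LL$. The auxiliary facts used implicitly — absoluteness of $\LL[x_n]$ and of its canonical well-ordering, absoluteness of $\QQ$-linear independence, and $\aleph_1^\LL=\aleph_1^V$ — are routine.
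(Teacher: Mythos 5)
Your proof is correct and follows essentially the same route as the paper's: use hypothesis (1) to convert the countable chain into an object coded by a single real (the paper covers the set of ordinals picking out the $p_n$'s under a definable bijection of ${\rm OR}$ with $\LL[z]$, while you cover the sequence of $<_{\LL[x_n]}$-ranks $\delta_n$ — a cosmetic difference), then use hypothesis (2) to find $y$ outside that model and apply Lemma \ref{5} to extend $\bigcup_n p_n$ to a Burstin basis, giving the lower bound.
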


\begin{proof} Consider a sequence $( p_n: n<\w)$ of conditions in $\PP_B$ such that $p_{n+1}\leq_{\PP_B} p_n$ for all $n<\w$. For each $n<\omega$, let $x_n \in
\RR$ be such that $p_n\in\LL[x_n]$ is a Burstin basis for $\RR\cap\LL[x_n]$.
Pick $z \in \RR$ such that $x_n \in L[z]$ for all $n<\omega$.  

\medskip
{\bf Claim.}
There is some $x \in \RR$ such that $\{ p_n: n<\w \} \in \LL[x]$.

\medskip
To prove the claim, notice that $\{ p_n : n<\w \} \subset L[z]$. 
Let $F : {\rm OR} \rightarrow L[z]$ be bijective and definable over $L[z]$,
and let 
$X = \{ \xi : \exists n<\w \, F(\xi) = p_n \}$. By hypothesis (1) there is some
$Y \supset X$, $Y \in \LL$, and $Y$ is countable in $\LL$. Let $f \colon \omega
\rightarrow Y$ be bijective, $f \in \LL$, and write $x^* = f^{-1}\mbox{''}X$.
Then $x^* \subset \omega$ and $X = f \mbox{''} x^* \in \LL[x^*]$. But then
$\{ p_n: n<\w \} \in\LL[z,x^*]$, and if $x \in \RR$ is such that $\LL[z,x^*] \subset \LL[x]$,
then $x$ verifies the Claim.

Now let $b = \bigcup \{ p_n : n<\w \}$, let $x$ be as in the Claim, 
and let us make use of
hypothesis (2) to pick some $y \in \RR \setminus \LL[x]$. We have that $b \in L[x]$, so that by Lemma 
\ref{5} we can extend the linearly independent set $b$ to a Burstin basis $p$ over $\LL[y]$.  Then, for every $n<\w$ we have that $p\leq_{\PP_B}p_n$, so $\PP_B$ is $\w$-closed.
\end{proof}

\begin{notation}For $\vec{x}$, $\vec{y}$  two real vectors of the same lenght,  let $\vec{x}\cdot\vec{y}:=\sum_{i<\lh(x)}x_iy_i$.
\end{notation}

\begin{remark}\label{pbf}We have that {\begin{align*}\label{eq1}p\in\PP_B &\iff\exists x(L[x]\models \text{``$p$ is a Burstin basis''})\\
&\iff\exists \vec{x}\in [p]^{<\w}\exists \vec{q}\in [\QQ]^{<\w}(\forall y\in\RR^{L[\vec{q}\cdot\vec{x}]}\exists \vec{p}_y\in[p]^{<\w}\exists \vec{q}_y\in[\QQ]^{<\w}\\
&\hspace{10mm} y=\vec{q}_y\cdot\vec{p}_y \wedge \forall \vec{z}\in[p]^{<\w}\forall\vec{q}\in[\QQ]^{<\w}(\vec{q} \cdot \vec{z}=0 \rightarrow \vec{q}=\vec{0})\wedge\\
&\hspace{10mm}  L[\vec{q}\cdot\vec{x}] \models \mbox{``} P \cap p\neq\varnothing \mbox{ for every perfect set } P \mbox{''} ) \end{align*}}
Since the matrix of this formula is $\Pi^1_{2}$ 
we have that  
\begin{equation}p\in \PP_B\iff \exists \vec{x}\in [p]^{<\w}\exists \vec{q}\in  [\QQ]^{<\w}\psi(\vec{x},\vec{q},p)\label{eq1}\end{equation}where $\psi$ is  $\Pi^1_2$.  
\end{remark}

\begin{remark}\label{gb}
In what follows, we will call
%
\centm{$\dot{b}:=
\{ ({\check x},p) : x \in p \in \PP_B \}$}
%
the \emph{canonical name} for the generic Burstin basis $b$. 
By the previous remark, 
{\begin{align*}
({\check x},p)\in \dot{b} &\iff 
x \in p \wedge \exists \vec{x}\in [p]^{<\w}\exists \vec{q}\in  [\QQ]^{<\w}\psi(\vec{x},\vec{q},p)\\
&\iff \theta(x,p){\rm , }
\end{align*}}
where $\theta$ is $\Sigma^1_3$, and
``$({\check x},p)\in \dot{b}$'' is absolute between transitive class sized models of set theory.\end{remark}

Let us discuss some variants of ${\mathbb P}_B$.
\begin{definition}\label{defn_hamel_basis_forcing_1}
We say $p\in \PP_{H}$ if and only if there exists $x\in\RR$ such that 
\begin{enumerate}\item $p\in \LL[x]$, and
\item $\LL[x]\models \text{``$p$ is a Hamel basis."}$
\end{enumerate} 
\noindent We stipulate $p\leq_{\PP_B}q$ iff $p\supseteq q$. 
\end{definition}

If ${\mathbb R} \cap V \subset L[x]$ for some real $x$, then like $\PP_B$, $\PP_H$ has a dense set of atoms.
If there is no real $x$ with ${\mathbb R} \cap V \subset L[x]$, then the content 
of Lemma \ref{5} is exactly that $\PP_B$ is dense in $\PP_H$, which implies that
$\PP_H$ and $\PP_B$ will be forcing equivalent and forcing with
$\PP_H$ will not just add a Hamel basis but in fact a Burstin basis.

Hence if we aim to generically add a Hamel basis which in the extension
contains a perfect set, then forcing with $\PP_H$ won't work. E.g., let
$P \in L$ be a perfect set in $L$ which is also linearily independent, see 
\cite[Example 1, p.\ 477f.]{jones}. If $M \supset L$ is any inner model, then let us
write $P_M$ for $M$'s version of $P$. Then $P_M$ is perfect in $M$,
$P_M \cap L = P$, and by $\Pi^1_1$ absoluteness, $P_M$ is linearily independent
in $M$. We may then let $p\in \PP_{H}^P$ if and only if there exists $x\in\RR$ such that 
$p\in \LL[x]$, $p \supset P_{\LL[x]}$, and
$\LL[x]\models \text{``$p$ is a Hamel basis"}$;
$p\leq_{\PP_H^P}q$ iff $p\supseteq q$. If $p \in \PP_H^P \cap \LL[x] \subset \LL[y]$,
$x$, $y \in {\mathbb R}$, then $p \cup P_{\LL[y]}$ is linearily independent by 
$\Pi^1_1$ absoluteness, so that $\PP_H^P$ will generically add a Hamel basis which
contains the version of $P$ of the model over which we force. The proof of Lemma 
\ref{m2} will go through for $\PP_H^P$ instead of $\PP_B$.

The following forcing, ${\mathbb Q}_{H}$, is the obvious candidate for adding a Hamel basis.

\begin{definition}\label{defn_hamel_basis_forcing_2}
We say $p\in {\mathbb Q}_{H}$ if and only if $p$ is a countable
linearily independent set of reals. 
\noindent We stipulate $p\leq_{\PP_H}q$ iff $p\supseteq q$. 
\end{definition}

It is clear that in $\omega_1$ is inaccessible to the reals (i.e., ${\mathbb R} \cap L[x]$
is countable for all reals $x$), then ${\mathbb Q}_{H}$ is dense in $\PP_H$ (and hence
also in $\PP_B$), so that under this hypothesis all the three forcings are forcing equivalent
with each other.
On the other hand, in the absence of large cardinals, in contrast to $\PP_B$ and $\PP_H$
(see Lemma \ref{m2} below) forcing with ${\mathbb Q}_H$ over $L({\mathbb R})$
will add a well-ordering of ${\mathbb R}$, see Corollary \ref{key_corollary} below, 
so that ${\mathbb Q}_H$ would be
the wrong candidate for forcing a Hamel basis for our purposes. 
(The forcing ${\mathbb Q}_H$ would be called $P_{\psi}$ in \cite{larson_zapletal}, where
$\psi$ expresses linear independence, see \cite[Introduction]{larson_zapletal}.)

\begin{lemma}\label{doesntwork}
Let ${\vec x} = (x_\alpha : \alpha < \omega_1)$ be a sequence of 
pairwise distinct reals such that $\{ x_\alpha : \alpha < \omega_1 \}$ is linearily 
independent. Let $g$ be ${\mathbb Q}_H$-generic over $V$, and let $h=
\bigcup g$. Then inside $L({\mathbb R},{\vec x},h)$, there is a well-order of ${\mathbb R}$
of order type $\omega_1$. In particular, $L({\mathbb R},{\vec x},h)$ is a model of $\ZFC$.
\end{lemma}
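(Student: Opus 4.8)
The plan is to recover a well-ordering of $\RR$ from the generic Hamel basis $h=\bigcup g$ together with the ground-model sequence $\vec x$. First I would record two routine facts about ${\mathbb Q}_H$. It is countably closed: a $\le_{{\mathbb Q}_H}$-decreasing $\w$-sequence of conditions has as a lower bound the union of its members, which is again a countable linearly independent set of reals; hence ${\mathbb Q}_H$ adds no reals and does not collapse $\ww 1$, so $V[g]$ and $V$ have the same reals. And for each $r\in\RR\cap V$ the set $\{p\in{\mathbb Q}_H : r\in\Span p\}$ is dense (given $p$ with $r\notin\Span p$, the set $p\cup\{r\}$ is a condition below $p$ lying in it), so by genericity $h$ is a maximal linearly independent set of reals, i.e.\ a Hamel basis, in $V[g]$. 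Consequently $\RR$, $\vec x$, $h$ all lie in $L(\RR,\vec x,h)$, this model has the same reals as $V$, ``$h$ is a Hamel basis'' holds in it by downward absoluteness, and for each $\alpha<\ww 1$ the finite support $F_\alpha:=\mathrm{supp}_h(x_\alpha)\in[h]^{<\w}$ of $x_\alpha$ in the basis $h$ is computed there.

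The heart of the argument is the claim that $h=\bigcup_{\alpha<\ww 1}F_\alpha$. The inclusion $\supseteq$ is immediate; for $\subseteq$ fix $v\in h$ and pick $p_0\in g$ with $v\in p_0$. I would show that
\[
E_v:=\{\,q\le_{{\mathbb Q}_H}p_0 : \exists\alpha<\ww 1\ \ q\Vdash \check v\in\mathrm{supp}_{\dot h}(\check x_\alpha)\,\}
\]
is dense below $p_0$; genericity then gives $v\in F_\alpha$ for some $\alpha$. So let $p\le_{{\mathbb Q}_H}p_0$. As $p$ is countable, $\Span p$ is countable, and since $\alpha\mapsto x_\alpha$ is injective there is $\alpha<\ww 1$ with $x_\alpha\notin\Span p$. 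Because $v\in p\subseteq\Span p$ we also get $x_\alpha-v\notin\Span p$ and $x_\alpha-v\ne 0$, so $q:=p\cup\{x_\alpha-v\}$ is a condition with $q\le_{{\mathbb Q}_H}p$, and in $q$ we have $x_\alpha=1\cdot v+1\cdot(x_\alpha-v)$, whence $\mathrm{supp}_q(x_\alpha)=\{v,x_\alpha-v\}$. Since $q\Vdash\check q\subseteq\dot h$ and representations over a Hamel basis are unique, $q\Vdash\mathrm{supp}_{\dot h}(\check x_\alpha)=\mathrm{supp}_{\check q}(\check x_\alpha)\ni\check v$, i.e.\ $q\in E_v$.

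Granting the claim, I would work inside $L(\RR,\vec x,h)$. For $v\in h$ let $\rho(v)$ be the least $\alpha$ with $v\in F_\alpha$ (defined everywhere on $h$ by the claim), and put $v<_h v'$ iff $\rho(v)<\rho(v')$, or $\rho(v)=\rho(v')$ and $v<v'$ in the usual order of $\RR$. Each level $\rho^{-1}\{\alpha\}\subseteq F_\alpha$ is finite, so $(h,<_h)$ is a well-order of length $\le\ww 1$. On the other hand $\alpha\mapsto x_\alpha$ embeds $\ww 1$ into $\RR$, and the map sending a nonzero real $r$ to the pair consisting of its $<_h$-ordered $h$-support and its tuple of nonzero rational coefficients embeds $\RR$ into $[h]^{<\w}\times\QQ^{<\w}$, which now has the same size as the well-orderable set $h$; hence $|h|=|\RR|=\ww 1$ in $L(\RR,\vec x,h)$, $(h,<_h)$ has order type exactly $\ww 1$, and composing the support map with a fixed definable well-order of $[h]^{<\w}\times\QQ^{<\w}$ of order type $\ww 1$ yields a well-order of $\RR$ of order type $\ww 1$. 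Finally, with such a well-order at hand one codes $\RR$, $\vec x$ and $h$ by a single $A\subseteq\ww 1$ inside $L(\RR,\vec x,h)$; then $A\in L(\RR,\vec x,h)$ gives $L[A]\subseteq L(\RR,\vec x,h)$, while $\RR,\vec x,h\in L[A]$ gives the reverse inclusion by minimality of $L(\RR,\vec x,h)$, so $L(\RR,\vec x,h)=L[A]\models\ZFC$.

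I expect the claim in the second paragraph to be the main obstacle: one must notice that the uncountably many linearly independent reals $x_\alpha$ force some $x_\alpha$ out of the span of any countable condition, and that rewriting $x_\alpha$ as $v+(x_\alpha-v)$ permanently fixes $v$ into the $h$-support of $x_\alpha$. A secondary point that needs care is the downward absoluteness of ``$h$ is a Hamel basis'' and of the supports $\mathrm{supp}_h(x_\alpha)$ from $V[g]$ to $L(\RR,\vec x,h)$, which is fine since the two models have the same reals and the relevant notions are arithmetic in parameters among $\RR$, $\vec x$, $h$.
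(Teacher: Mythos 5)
Your proof is correct, but it follows a genuinely different route from the paper's. The paper codes an arbitrary $x\subseteq\omega$ directly into the generic basis: given a condition $p$, it uses the linear independence of $\{x_\alpha:\alpha<\omega_1\}$ to find a limit $\lambda$ with $p\cup\{x_{\lambda+n}:n<\omega\}$ independent, and then extends $p$ by putting $x_{\lambda+n}$ into the condition for $n\in x$ and $2\cdot x_{\lambda+n}$ for $n\notin x$ (the doubled elements permanently exclude $x_{\lambda+n}$ from $h$); genericity then makes $x\mapsto$ the least such $\lambda$ with $x=\{n:x_{\lambda+n}\in h\}$ a total injection of $\pow(\omega)$ into $\omega_1$, definable in $L(\RR,\vec x,h)$, and the ZFC conclusion is drawn from the definable surjection of $\RR\times\On$ onto the model rather than from your $L[A]$ coding (both are standard). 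You instead prove the structural fact that every $v\in h$ lies in the $h$-support of some $x_\alpha$ — via the dense set obtained by adjoining $x_\alpha-v$ for some $x_\alpha\notin\Span{p}$ — so that $h$ is an $\omega_1$-union of finite sets, hence well-orderable of size $\aleph_1$, and $\RR$ inherits a well-order through the support-and-coefficients embedding into $[h]^{<\omega}\times\QQ^{<\omega}$. Your density argument and cardinality bookkeeping are sound (in particular $x_\alpha-v\notin\Span{p}$, the uniqueness of representations over $q\subseteq h$, and $|h|\leq\aleph_1$ via the injection into $\omega_1\times\omega$ all check out). What each buys: the paper's argument yields a very explicit injection $\pow(\omega)\to\omega_1$ and exploits the independence hypothesis on $\vec x$; yours never uses that independence — an injective $\omega_1$-sequence of reals suffices — and additionally shows that the generic Hamel basis itself has cardinality $\aleph_1$ in $L(\RR,\vec x,h)$, so it is a mild strengthening of the lemma as stated.
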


\begin{proof}
Of course ${\mathbb Q}_H$ is $\omega$-closed, so that $V$ and $V[g]$ have
the same reals. Hence $h$ is a Hamel basis inside $L({\mathbb R},h)$.

Let $p \in {\mathbb Q}_H$, and let $x \subset \omega$.
There is a countable limit ordinal $\lambda$ such that $p \cup \{ x_{\lambda +n}
: n<\omega \}$ is linearily independent. Let $$q = p \cup
\{ x_{\lambda +n} : n \in x \} \cup \{ 2 \cdot x_{\lambda +n} : n \in \omega
\setminus x \}.$$ 
Then $q \in {\mathbb Q}_H$, $q \leq_{{\mathbb Q}_H} p$, and $x = \{ n<\omega
: x_{\lambda +n} \in q \}$.

In $L({\mathbb R},{\vec x},h)$ let us define $f : \pow(\omega) \rightarrow \omega_1$
by $f(x) =$ the least countable limit ordinal such that $x = \{ n<\omega
: x_{\lambda +n} \in h \}$. Trivially, $f$ is injective, and by the density argument
from the previous section $f$ is a well-defined total function. This shows that
in $L({\mathbb R},{\vec x},h)$, there is a well-order of ${\mathbb R}$
of order type $\omega_1$.

As there is a surjection $F : {\mathbb R} \times {\rm OR} \rightarrow L({\mathbb R},{\vec x},h)$
which is $\Sigma_1$-definable over $L({\mathbb R},{\vec x},h)$ from the parameters ${\mathbb R}$, ${\vec x}$, and $h$, the existence of a well-order of ${\mathbb R}$ inside $L({\mathbb R},{\vec x},h)$
yields that $L({\mathbb R},{\vec x},h)$ is a model of $\ZFC$.
\end{proof}

\begin{corollary}\label{key_corollary}
Assume that $\omega_1$ is not inaccessible the reals, let $g$ be ${\mathbb Q}_H$-generic over $V$, and let $h=
\bigcup g$. Then in $L({\mathbb R},h)$, there is a well-order of ${\mathbb R}$
of order type $\omega_1$ and $L({\mathbb R},h)$ is a model of $\ZFC$.
\end{corollary}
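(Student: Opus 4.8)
The plan is to derive Corollary \ref{key_corollary} from Lemma \ref{doesntwork} by producing, inside the forcing extension, a sequence $\vec x$ of the kind required by the lemma and then observing that the resulting model $L(\mathbb{R},\vec x, h)$ is in fact contained in $L(\mathbb{R},h)$. So the first step is: since $\omega_1$ is not inaccessible to the reals, fix a real $x_0$ with $\omega_1^{L[x_0]} = \omega_1$, equivalently such that $\mathbb{R} \cap L[x_0]$ has size $\aleph_1$ (in $V$, hence also in $V[g]$, since $\mathbb{Q}_H$ is $\omega$-closed and adds no reals and collapses no cardinals). Inside $L[x_0]$ we have a well-ordering of its reals in order type $\omega_1$; along this well-ordering we can recursively thin out to a maximal linearly independent subsequence, obtaining $\vec x = (x_\alpha : \alpha < \omega_1) \in L[x_0]$ consisting of pairwise distinct reals with $\{x_\alpha : \alpha < \omega_1\}$ linearly independent (in fact a Hamel basis of $L[x_0]$, though we only need independence and that it has length $\omega_1$; one does need $\mathbb{R}\cap L[x_0]$ to be genuinely uncountable so that the recursion runs for $\omega_1$ steps, which is exactly the hypothesis).

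The second step is to invoke Lemma \ref{doesntwork} with this $\vec x$: it gives that inside $L(\mathbb{R}, \vec x, h)$ there is a well-order of $\mathbb{R}$ of order type $\omega_1$, and that $L(\mathbb{R}, \vec x, h) \models \ZFC$. The third step — and the only point that needs a remark rather than a citation — is that $L(\mathbb{R}, \vec x, h) = L(\mathbb{R}, h)$, or at least that everything we want already lives in $L(\mathbb{R}, h)$. This holds because $\vec x \in L[x_0] \subseteq L(\mathbb{R})$: the whole sequence $\vec x$ is an element of $L(\mathbb{R})$ (it is constructed from the single real $x_0$ inside $L[x_0]$, and $x_0 \in \mathbb{R}$), hence $\vec x \in L(\mathbb{R}, h)$, and therefore the inner models $L(\mathbb{R}, \vec x, h)$ and $L(\mathbb{R}, h)$ coincide. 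Consequently the well-order of $\mathbb{R}$ of order type $\omega_1$ produced by Lemma \ref{doesntwork} is already an element of $L(\mathbb{R}, h)$, and the $\Sigma_1$-definable surjection $F : \mathbb{R} \times \mathrm{OR} \to L(\mathbb{R}, h)$ from the parameters $\mathbb{R}$ and $h$ (no longer needing $\vec x$, precisely because $\vec x \in L(\mathbb{R})$) together with this well-order shows $L(\mathbb{R}, h) \models \ZFC$ by the same argument as in the proof of Lemma \ref{doesntwork}.

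I expect no serious obstacle here; the corollary is essentially a packaging of Lemma \ref{doesntwork} together with the trivial observation that when $\omega_1$ is singular over some $L[x_0]$ one can manufacture the needed length-$\omega_1$ linearly independent sequence inside $L(\mathbb{R})$ itself, so that the auxiliary parameter $\vec x$ becomes redundant. The one place to be slightly careful is to confirm that $h$ really is a Hamel basis in $L(\mathbb{R},h)$ — but this is already noted in the proof of Lemma \ref{doesntwork} (it uses only $\omega$-closure of $\mathbb{Q}_H$ and the density argument giving that $\bigcup g$ is maximal linearly independent) — and to confirm that passing from $V$ to the forcing extension does not disturb the hypothesis "$\omega_1^{L[x_0]} = \omega_1$", which is immediate since $\mathbb{Q}_H$ preserves $\omega_1$. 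I would write the proof in three short sentences: fix $x_0$ and build $\vec x \in L[x_0] \subseteq L(\mathbb{R})$; apply Lemma \ref{doesntwork}; note $L(\mathbb{R},\vec x,h) = L(\mathbb{R},h)$ and read off both conclusions.
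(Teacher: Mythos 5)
Your proposal is correct and follows essentially the same route as the paper, whose entire proof is the one-liner that the hypothesis yields a real $x$ and a sequence $\vec{x}\in L[x]$ satisfying the hypothesis of Lemma \ref{doesntwork}; your extra step making explicit that $\vec{x}\in L[x_0]\subseteq L(\mathbb{R})$, hence $L(\mathbb{R},\vec{x},h)=L(\mathbb{R},h)$ and the parameter $\vec{x}$ is redundant, is exactly what the paper leaves implicit. (Only a cosmetic remark: ``not inaccessible to the reals'' means some $L[x_0]$ has uncountably many reals, i.e.\ $\omega_1^{L[x_0]}=\omega_1$, not that $\omega_1$ is ``singular over $L[x_0]$''---your actual argument uses the correct reading.)
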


\begin{proof} By our hypothesis, there is a real $x$ such that we may pick 
${\vec x} \in L[x]$ and ${\vec x}$ is as in the hypothesis of Lemma \ref{doesntwork}.
\end{proof} 

\section{The main theorem}

The following Lemma is dual to Corollary \ref{key_corollary}.

 \begin{lemma}\label{m2}Let $g$ be $\sa_{\w_1}$-generic over $\LL$, let $h$ be $\PP_B$-generic $h$ over $\LL[g]$ and let $b = \bigcup h$ be the Burstin basis added by $h$.  Let 
\[W= \LL(\RR, b)^{\LL[g,h]}\]
Then $W\models \text{``\,There is no well-ordering of $\RR$\:''}$.\end{lemma}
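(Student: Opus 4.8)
The plan is to show that $W = \LL(\RR,b)^{\LL[g,h]}$ cannot have a well-ordering of its reals by deriving a contradiction from the assumption that it does. The crucial feature of $b$ that we will exploit is a homogeneity/symmetry property: the Burstin basis $b$ was added by a forcing $\PP_B$ whose conditions are "local" (each condition lives in some $\LL[x]$ for a real $x$), while the ambient model $\LL[g]$ was obtained by the side-by-side product $\sa_{\w_1}$, which has a large automorphism group permuting the $\w_1$-many mutually generic Sacks coordinates. First I would recall, using Corollary \ref{its-proper}, that every real of $\LL[g]$ already appears in $\LL[g\restriction a]$ for some countable $a \subseteq \w_1$, and by Lemma \ref{pb3} the forcing $\PP_B$ is $\w$-closed over $\LL[g]$, so $\LL[g,h]$ and $\LL[g]$ have exactly the same reals; hence $\RR^W = \RR^{\LL[g]}$.

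Next I would set up the symmetry argument proper. Suppose toward a contradiction that $W \models$ "there is a well-ordering of $\RR$". Then in $W$ there is a surjection $\RR \to W$ definable from $b$, $\RR$ and finitely many ordinals (as in the last paragraph of the proof of Lemma \ref{doesntwork}), and in particular from a well-ordering of $\RR$ in $W$ one can define, with parameters $b$, $\RR$ and an ordinal, a set $A \subseteq \w_1$ coding (a copy of) the generic filter $g$ up to the usual isomorphism — equivalently, one can recover enough of $g$ to reconstruct a well-ordering of $\RR^{\LL[g]}$, which $\LL(\RR)^{\LL[g]}$ provably does not have (this is the standard Solovay-style fact that $\LL(\RR)$ of the symmetric Sacks extension has no well-ordering of the reals). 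To make this precise I would name the putative well-order by a term in the parameters, push it back into $\LL[g]$, and then apply automorphisms of $\sa_{\w_1}$: because $b$ is "definable from a local name" (Remark \ref{gb} shows $\dot b$ is given by a $\Sigma^1_3$ formula $\theta$, hence is invariant under all automorphisms of $\sa_{\w_1}$ that we will use, since such automorphisms fix $\RR$ pointwise on the symmetric part and commute with $\theta$), any automorphism $\pi$ of $\sa_{\w_1}$ fixing the relevant finite support of the name for the well-order must fix the well-order, yet $\pi$ can be chosen to move a Sacks coordinate not in that support, contradicting that the well-order computes that coordinate.

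Concretely, the key steps in order are: (i) observe $\RR^W = \RR^{\LL[g]}$ via Corollary \ref{its-proper} and Lemma \ref{pb3}; (ii) express $W$ as $\LL(\RR, b)$ and note that every element of $W$ is definable in $\LL[g,h]$ from $\RR$, $b$, and an ordinal; (iii) assume a well-order $<^*$ of $\RR^W$ lies in $W$, so $<^*$ is definable from $\RR$, $b$, finitely many ordinals, inside $\LL[g,h]$; (iv) using $\w$-closedness absorb $h$: $<^*$ is already definable over $\LL[g]$ from $\RR$, $b$, ordinals, because $\PP_B$ adds no reals and $b$ "decides" nothing new about $\RR$; (v) fix a countable $a \subseteq \w_1$ supporting names for all the parameters actually used (here the $\Sigma^1_3$-ness of $\theta$ from Remark \ref{gb} guarantees $b$ itself imposes no support constraint beyond what is needed to evaluate $\theta$ on reals, which live on countable supports); (vi) pick $\xi \in \w_1 \setminus a$ and an automorphism $\pi$ of $\sa_{\w_1}$ swapping coordinate $\xi$ with a fresh coordinate while fixing $a$; (vii) $\pi$ fixes $\RR^{\LL[g]}$-setwise, fixes $b$ (by invariance of $\theta$), fixes all the ordinal parameters, hence fixes $<^*$; but $<^*$ recovers the Sacks real $s_\xi$ (e.g. as the $<^*$-least real coding $\xi$ in some canonical fashion), and $\pi(s_\xi) = s_{\xi'} \neq s_\xi$ — contradiction. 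Therefore $W \models$ "there is no well-ordering of $\RR$".

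The main obstacle I anticipate is step (v)–(vii): making rigorous that the generic Burstin basis $b$ is genuinely invariant under the automorphisms of $\sa_{\w_1}$ one wants to use, i.e. that the parameter $b$ does not secretly encode a well-ordering of the reals the way $h$ in Lemma \ref{doesntwork} did. This is exactly where the difference between $\PP_B$ and $\Q_H$ matters: conditions of $\PP_B$ are "already well-ordered" inside some $\LL[x]$, so the generic object $b$ is a union of locally-constructible pieces and carries no more information than the reals plus the sequence of Sacks generics does in a symmetric way, whereas $\Q_H$-conditions are arbitrary countable linearly independent sets and let the generic read off a well-order (Corollary \ref{key_corollary}). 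I would therefore spend the bulk of the proof verifying a homogeneity lemma for $\PP_B$ over $\sa_{\w_1}$: for any automorphism $\pi$ of $\sa_{\w_1}$ and any $\sa_{\w_1}$-name, $\pi$ extends to an automorphism of (a dense subset of) $\sa_{\w_1} * \dot\PP_B$ fixing $\dot b$, using that membership in $\PP_B$ and in $\dot b$ is given by the projective formula $\theta$ of Remark \ref{gb}, which is invariant under any automorphism fixing the relevant reals — and reals have countable support. Granting that lemma, the Solovay-style symmetry contradiction closes the argument.
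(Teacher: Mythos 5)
There is a genuine gap. Your plan stands or falls with steps (v)--(vii), and the mechanism you propose there does not work as stated. First, the parameter $b$ is not an $\sa_{\w_1}$-name evaluated by $g$: it is added by $\PP_B$ over $\LL[g]$, so an automorphism $\pi$ of $\sa_{\w_1}$ has no canonical action on $b$ at all, and the claim that $\pi$ ``fixes $b$'' because the name $\dot b=\{(\check y,q):\theta(y,q)\}$ is given by a projective formula conflates invariance of the \emph{name} (which is trivial, since the name is definable) with invariance of the \emph{generic object} $b=\dot b^h$, which depends on the $\PP_B$-generic $h$ and which no automorphism of the Sacks product controls. Relatedly, step (iv) (``$<^*$ is already definable over $\LL[g]$'') is false on its face, since $b\notin\LL[g]$; the correct move is to pass to a forcing statement over $\LL[g]$ about the definable name $\dot b$, which is what the absoluteness observation of Remark \ref{gb} is actually for. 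Second, the concrete contradiction in (vii) is unfounded: a well-ordering $<^*$ of $\RR$ does not ``recover the Sacks real $s_\xi$'' --- the $<^*$-least real coding the countable ordinal $\xi$ has no reason whatsoever to be $s_\xi$ or to be moved by your $\pi$. So even granting your homogeneity lemma, the final step would not produce a contradiction; and the homogeneity lemma itself is exactly the unproved crux, not a routine verification: $\PP_B$ is not obviously homogeneous, and the danger that its generic encodes a well-order is precisely what Corollary \ref{key_corollary} shows can happen for the superficially similar forcing ${\mathbb Q}_H$.

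The paper closes this gap by a different device, which your outline is missing. Assuming $p\in h$ forces that $\varphi(\cdot,\cdot,\vec x,\vec\alpha,\dot b)$ defines a well-ordering of ${}^\w 2$ (with $\dot b$ the \emph{definable} name of Remark \ref{gb}, so that the same forcing statement makes sense over different ground models), one fixes $\xi$ with $p,\vec x\in\LL[\res{g}{\xi}]$, uses the product factorization and weak homogeneity of $\sa_{\w_1}$ to transfer the statement to $\LL[\res{g}{\xi}]$, and then introduces a \emph{second} generic $g^*$, mutually generic with $\res{g}{[\xi,\w_1)}$, together with a $\PP_B$-generic $h^*\ni p$ over $\LL[\res{g}{\xi},g^*]$. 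Since the two extensions have different reals, the two putative well-orders must disagree on some digit, giving conditions $p_0\in h$, $p_1\in h^*$ below $p$ forcing contradictory statements. The decisive point is Claim \ref{key}: by mutual genericity (the two models' reals intersect in $\RR\cap\LL[\res{g}{\xi}]$, where $p$ is a Hamel basis), $p_0\cup p_1$ is linearly independent, so by Lemma \ref{5} the two conditions are compatible in $\PP_B^{\LL[g,g^*]}$, and a common extension forces both contradictory digit values over $\LL[g,g^*]$. This amalgamation-across-mutually-generic-extensions argument is the substitute for the homogeneity of $\PP_B$ that your proposal assumes but does not supply; without it (or an actual proof of your proposed homogeneity lemma, plus a corrected final contradiction), the argument does not go through.
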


\begin{proof} That $b$ is indeed a Burstin basis in $\LL[g,h]$ as well as in $W$ follows
from Lemmas \ref{extendability} and \ref{pb3}.

Let us assume for contradiction that 
\[\LL[g, h]\models \text{``$\varphi(\cdot\;, \cdot\;, \vec{x}, \vec{\alpha}, b)$ defines a well-ordering of ${}^\w2$"}\]
where $\vec{x}\in\RR\cap\LL[g,h]=\RR\cap\LL[g]$ and $\vec{\alpha}\in\OR$. 

Then, there is some $p\in h\subset\PP_B$ such that
\[p\dststile{\LL[g]}{\PP_B}\text{``$\varphi(\cdot\;, \cdot\;, \check{\vec{x}}, \check{\vec{\alpha}}, \dot{b})$ defines a well-ordering of ${}^\w2$"} \]where $\dot{b}$ is the canonical $\PP_B$-name for the generic Burstin basis $b$ as defined in  Remark \ref{gb}; but then we may rewrite this as
\[p\dststile{\LL[g]}{\PP_B}\text{``$\varphi(\cdot\;, \cdot\;, \check{\vec{x}}, \check{\vec{\alpha}}, \{({\check y},q): \theta(y,q)\})$ defines a well-ordering of ${}^\w2$,''} \]
with $\theta$ being the $\Sigma^1_3$ formula from Remark \ref{gb}.
We may pick $\xi<\w_1$ with $p, \vec{x}\in\LL[\res{g}{\xi}]$, see Corollary \ref{its-proper}.  Now since $\sa_{\xi}\times \sa_{\w_1\setminus \xi}$ is isomorphic to $\sa_{\w_1}$ via the the isomorphism $(p_0,p_1)\mapsto p_0\cup p_1$,  standard arguments show that $\res{g}{[\xi,\w_1)}$ is $\sa_{\w_1\setminus \xi}$-generic over $L[\res{g}{\xi}]$ and so we can write
\begin{equation}\label{eq2}p\dststile{\LL[\res{g\,}{\,\xi}][\res{g\,}{\,[\xi,\,\w_1)}]}{\PP_B} \text{``$\varphi(\cdot\;, \cdot\;, \check{\vec{x}}, \check{\vec{\alpha}}, \{({\check y},q): \theta(y,q)\})$ defines a well-ordering of ${}^\w2$''} \end{equation}

The following only uses that $\sa_{\w_1}$ is a countable support product of uncountably many copies of the same forcing.

\begin{claim}\label{homo}$\sa_{\w_1}$ is weakly homogeneous, i.e., given $p,p'\in\sa_{\w_1}$ there is an isomorphism $\pi:\sa_{\w_1}\to\sa_{\w_1}$ such that $p||\pi(p')$.\end{claim}
\begin{proof}
Let $p,p'\in\sa_{\w_1}$.   Since $\supp{p}$ is countable there is some $\gamma<\w_1$ such that $\supp{p}\subset \gamma$.  Set $\pi:\sa_{\w_1}\to\sa_{\w_1}$ defined as follows:\[\pi(r)(\beta)=\begin{cases} 1_{\sa} &\text{ if $\beta<\gamma$}\\
r(\alpha)&\text{ if $\beta=\gamma+\alpha$}\end{cases}\]Note that $\supp{p}\cap\supp{\pi(p')}=\varnothing$, hence $p||\pi(p')$.\end{proof}

\noindent Since $\sa_{\w_1}$ is weakly homogeneus and $\sa_{\omega_1 \setminus \xi}
\cong \sa_{\omega_1}$, \eqref{eq2} gives us 
\[\one\dststile{\LL[\res{\,g}{\,\xi}]}{\sa_{\w_1}} \check{p}\dststile{\LL[\check{\res{\,g}{\,\xi}}][\dot{g}]}{\PP_B}\text{``$\varphi(\cdot\;, \cdot\;, \check{\check{\vec{x}}}, \check{\check{\vec{\alpha}}}, \{({\check y},q): \theta(y,q)\})$ defines a well-ordering of ${}^\w2$."}\] 
 
 \noindent Let $g^*$ be $\sa_{\w_1}$-generic over $\LL[g]$ so that $\res{g}{[\xi,\w_1)}$ and $g^*$ are (or may be construed as) mutually $\sa_{\w_1}$-generics over $L[\res{g}{[\xi, \w_1)}]$, and let $h^*$ be $\PP_B$-generic over $\LL[\res{g}{\xi}, g^*]$ with $p\in h^*$.  We have that
\centm{$\LL[\res{g}{\xi}, g^*][h^*]\models \text{``$\varphi(\cdot\;, \cdot\;, \vec{x}, \vec{\alpha}, b^*)$ defines a well-ordering of ${}^\w2$,''}$}where $b^*:=\bigcup h^*$ is the Burstin basis added by $h^*$.  Since \centm{$\RR\cap\LL[\res{g}{\xi}, g^*][h^*]=\RR\cap\LL[\res{g}{\xi}, g^*]\neq\RR\cap\LL[g]=\RR\cap\LL[g][h]$}
we can find some $\beta$, some $n<\w$, and $i \in \{ 0, 1 \}$ such that
{\begin{enumerate-(i)}\item $\LL[g,h]\models \text{``the $n^{th}$ digit of the $\beta^{th}$ element of ${}^\w2$ given by $\varphi(\cdot\;, \cdot\;, \vec{x}, \vec{\alpha}, b)$ is $i$"}$
\item $\LL[\res{g}{\xi},g^*][h^*]\models \text{``the $n^{th}$ digit of the $\beta^{th}$ element of ${}^\w2$ given by $\varphi(\cdot\;, \cdot\;, \vec{x}, \vec{\alpha}, b^*)$ is $1-i$"}$\end{enumerate-(i)}} 

\noindent Thus there exist two conditions  $p_0\in h$ and $p_1\in h^*$  below $p$ such that
{\begin{enumerate-(is)}\item $p_0\dststile{\LL[g]}{\PP_B}\text{``the $\check{n}^{th}$ digit of the $\check{\beta}^{th}$ element of ${}^\w2$ given}\\\hspace{5cm} 
\text{ by $\varphi(\cdot\;, \cdot\;, \check{\vec{x}}, \check{\vec{\alpha}}, \{({\check y},q): \theta(y,q)\})$ is $\check{i}$''}$
\item $p_1\dststile{\LL[\res{g}{\,\xi}, \,g^*]}{\PP_B}\text{``the $\check{n}^{th}$ digit of the $\check{\beta}^{th}$ element of ${}^\w2$ given}\\\hspace{5cm} 
\text{ by $\varphi(\cdot\;, \cdot\;, \check{\vec{x}}, \check{\vec{\alpha}}, \{({\check y},q): \theta(y,q)\})$ is $\check{1-i}$''}$  
\end{enumerate-(is)}}
\noindent Pick $\zeta\geq\xi$, $\zeta<\w_1$ such that $p_0\in\LL[\res{g}{\zeta}]$ and $p_1\in\LL[\res{g}{\xi}, \res{g^*}{\zeta}]$, say $\xi+\zeta=\zeta$.  Then (i)* and (ii)* above give us
{\[(*)\begin{cases}\one\dststile{\LL[\res{g\,}{\,\zeta}]}{\sa_{\w_1}}  \:\check{p_0}\dststile{\LL[\res{g\,}{\,\zeta}][\dot{g}]}{\PP_B}\text{``the $\check{\check{n}}^{th}$ digit of the $\check{\beta}^{th}$ element of ${}^\w2$ given by}\\\hspace{5cm} 
\text{ $\varphi(\cdot\;, \cdot\;, \check{\check{\vec{x}}}, \check{\check{\vec{\alpha}}}, \{({\check y},q): \theta(y,q)\})$ is $\check{\check{i}}$''}\\
\one\dststile{\LL[\res{g\,}{\,\xi}, \res{g^*}{\,\zeta}]}{\sa_{\w_1}} \:\check{p_1}\dststile{\LL[\res{g\,}{\,\xi},\res{g^*}{\,\zeta}][\dot{g}]}{\PP_B}\text{``the $\check{\check{n}}^{th}$ digit of the $\check{\beta}^{th}$ element of ${}^\w2$ given by}\\\hspace{5cm} 
\text{$\varphi(\cdot\;, \cdot\;, \check{\check{\vec{x}}}, \check{\check{\vec{\alpha}}}, \{({\check y},q): \theta(y,q)\})$ is $\check{\check{1-i}}$''}
\end{cases}\]}
\noindent Now  we want to make sure that the conditions $p_0$ and $p_1\in L[g,g^*]$ are compatible.

\begin{claim}\label{key} $p_0\cup p_1$ is linearly independent.\end{claim}

\begin{proof} We may assume without loss of generality that \centm{$\LL[\res{g}{\xi}]\models ``p_0 \text{ is a Burstin basis."}$}In particular, it is
true in $\LL[\res{g}{\xi}]$ that $p_0$ is a Hamel basis.\:  Suppose that there are $\vec{y}\in p$, $\vec{y}_0\in p_0\smallsetminus p$, $\vec{y}_1\in p_1\smallsetminus p$ and some vectors of rational numbers $\vec{q}, \vec{q_0}$, $\vec{q_1}$ such that
\begin{equation}\label{eq15}\vec{q}\cdot\vec{y}+ \vec{q_0}\cdot\vec{y_0}+\vec{q_1}\cdot \vec{y_1}=0\end{equation}
\noindent By mutual genericity we have \centm{$\vec{q}\cdot\vec{y}+\vec{q_0}\cdot\vec{y_0}= -\vec{q_1}\cdot\vec{y_1}\in \LL[\res{g}{\zeta}]\cap\LL[\res{g}{\xi},\res{g^*}{\zeta}]=\LL[\res{g}{\xi}]$} 
Since $p$ is a Hamel basis for the reals of $\LL[\res{g}{\xi}]$, there exists some $\vec{z}_1\in[p]^{<\w}$, $\vec{r}_1\in[\QQ]^{<\w}$ such that 
\vspace{-5mm}\centm{$\vec{r}_1\cdot\vec{z}_1=-\vec{q}_1\cdot\vec{y}_1$}  Since $p_1\supset p$ is linearly independent it follows that $\vec{r}_1=0=\vec{q}_1$.   Coming back to the equation \eqref{eq15}, we now have that 
\vspace{-5mm}\centm{$\vec{q}\cdot \vec{y}+\vec{q}_0\cdot\vec{y}_0=0$} Since  $q_0\supset p$ is also linearly independent, we conclude in that $\vec{q}=0=\vec{q}_0$.  Hence  $p_0\cup p_1$ is linearly independent.
\end{proof}

\noindent We may construe ${\res{g}{[\zeta, \w_1)}}^\smallfrown g^*$ as $\sa_{\w_1}$-generic over $\LL[\res{g}{\xi}, \res{g^*}{\zeta}]$ as well as over $L[\res{g}{\zeta}]$.  Therefore by $(*)$ it follows that
{\[(**)\begin{cases}p_0\dststile{\LL[g][g^*]}{\PP_B}\text{``the $\check{n}^{th}$ digit of the $\check{\beta}^{th}$ element of ${}^\w2$ given by}\\\hspace{5cm} 
\text{ $\varphi(\cdot\;, \cdot\;, \check{\vec{x}}, \check{\vec{\alpha}}, \{({\check y},q): \theta(y,q)\})$ is $\check{i}$"}\\
p_1\dststile{\LL[g][g^*]}{\PP_B}\text{``the $\check{n}^{th}$ digit of the $\check{\beta}^{th}$ element of ${}^\w2$ given by}\\\hspace{5cm} 
\text{ $\varphi(\cdot\;, \cdot\;, \check{\vec{x}}, \check{\vec{\alpha}}, \{({\check y},q): \theta(y,q)\})$ is $\check{1-i}$"}  
\end{cases}\]}
By claim \ref{key} and lemma \ref{5}, there is some $q\leq p_0, p_1$, $q\in{\PP_B}^{\LL[g,\, g^*]}$.  But then, $q$ forces the contradictory statements
from the matrices of  $(**)$.   This concludes the proof.
\end{proof}

The previous proof in fact shows the following.
\begin{lemma}\label{lemma_6.2} Let $g$ be $\sa_{\w_1}$-generic over $\LL$, let $h$ be $\PP_B$-generic $h$ over $\LL[g]$ and let $b = \bigcup h$ be the Burstin basis added by $h$. 
Inside $L[g,h]$, there are Turing-cofinally many $x \in {\mathbb R}$
such that if $X \subset L[x]$, $X \in {\rm OD}_{x,b}$, then $X \in L[x]$.
\end{lemma}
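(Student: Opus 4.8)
The plan is to observe that the proof of Lemma \ref{m2} already yields this statement once it is localized to subsets of $L[x]$ in place of a putative well-order of ${}^\w2$; the witnessing reals will be codes for initial segments of $g$. For the reduction, fix any $y\in\RR\cap\LL[g,h]=\RR\cap\LL[g]$ (recall $\PP_B$ adds no reals over $\LL[g]$ by Lemma \ref{pb3}) and pick $\eta<\w_1$ with $y\in\LL[\res g\eta]$, using Corollary \ref{its-proper}. By Lemma \ref{5}, every $p'\in\PP_B$ can, for any prescribed $\gamma<\w_1$, be extended to some $p\le_{\PP_B}p'$ that lies in $\LL[\res g{\xi'}]$ and is a Burstin basis for $\RR\cap\LL[\res g{\xi'}]$ for some $\xi'\ge\gamma$ (extend $p'$, a Burstin basis for $\RR\cap\LL[w]$ for a suitable real $w$, to a Burstin basis of $\RR\cap\LL[\res g{\xi'}]$ with $\xi'$ large enough to capture $w$); so such conditions are dense, and by genericity of $h$ we get $\xi'\ge\eta$ and $p_{\mathrm{base}}\in h$ with $p_{\mathrm{base}}\in\LL[\res g{\xi'}]$ a Burstin basis for $\RR\cap\LL[\res g{\xi'}]$. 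Let $x\in\RR$ code $\langle\res g{\xi'},y\rangle$; then $x\ge_T y$, $\LL[x]=\LL[\res g{\xi'}]$, and by the isomorphism $\sa_{\xi'}\times\sa_{\w_1\setminus\xi'}\cong\sa_{\w_1}$ the real-part $\res g{[\xi',\w_1)}$ is $\sa_{\w_1\setminus\xi'}\cong\sa_{\w_1}$-generic over $\LL[x]$, while $p_{\mathrm{base}}\in h$ is a Burstin basis for $\RR\cap\LL[x]$. It suffices to prove the assertion for every such $x$.

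So fix such an $x$ and let $X\in{\rm OD}_{x,b}^{\LL[g,h]}$ with $X\subseteq\LL[x]$; identifying elements of $\LL[x]$ with ordinals via the canonical well-order of $\LL[x]$, we may assume $X\subseteq\On$, say $X=\{\alpha:\LL[g,h]\models\varphi(\alpha,x,\vec\beta,b)\}$ with $\vec\beta\in\On$. By Remark \ref{gb} we replace $b$ by the interpretation of its canonical name $\dot b=\{(\check y,q):\theta(y,q)\}$, with $\theta$ the $\Sigma^1_3$ formula there, so that $\alpha\in X$ iff some $p\in h$ satisfies $p\dststile{\LL[g]}{\PP_B}\varphi(\check\alpha,\check x,\check{\vec\beta},\dot b)$ --- a statement of the $\sa_{\w_1}$-forcing language over $\LL[x]$ all of whose parameters ($\check\alpha,\check x,\check{\vec\beta}$, the definable classes $\PP_B$ and $\RR$, and the check-name $\check p_{\mathrm{base}}$ below) are invariant under automorphisms of $\sa_{\w_1}$. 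I claim that
\centm{$\alpha\in X\iff\one\dststile{\LL[x]}{\sa_{\w_1}}\left(\check p_{\mathrm{base}}\dststile{\LL[x][\dot g]}{\PP_B}\varphi(\check\alpha,\check x,\check{\vec\beta},\dot b)\right),$}
and, since the right-hand side is definable over $\LL[x]$ uniformly in $\alpha$, this gives $X\in\LL[x]$. The implication $\Leftarrow$ is immediate: interpreting by $\res g{[\xi',\w_1)}$ yields $p_{\mathrm{base}}\dststile{\LL[g]}{\PP_B}\varphi(\check\alpha,\check x,\check{\vec\beta},\dot b)$, and since $p_{\mathrm{base}}\in h$ we conclude $\alpha\in X$.

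For $\Rightarrow$, suppose $\alpha\in X$ but the right-hand side fails; then by weak homogeneity of $\sa_{\w_1}$ (Claim \ref{homo}) and separativity of $\PP_B$, $\one\dststile{\LL[x]}{\sa_{\w_1}}\exists q\le_{\PP_B}\check p_{\mathrm{base}}\,(q\dststile{\LL[x][\dot g]}{\PP_B}\neg\varphi(\check\alpha,\check x,\check{\vec\beta},\dot b))$. On the other hand, from $\alpha\in X$ and $p_{\mathrm{base}}\in h$ we get $p_0\in h$ with $p_0\le_{\PP_B}p_{\mathrm{base}}$ and $p_0\dststile{\LL[g]}{\PP_B}\varphi(\check\alpha,\check x,\check{\vec\beta},\dot b)$. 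Now run the endgame of the proof of Lemma \ref{m2}: pick $g^*$ that is $\sa_{\w_1}$-generic over $\LL[g]$ --- so $\res g{[\xi',\w_1)}$ and $g^*$ may be construed as mutually $\sa_{\w_1}$-generic over $\LL[x]$ --- get $q_0\le_{\PP_B}p_{\mathrm{base}}$ in $\PP_B^{\LL[x,g^*]}$ with $q_0\dststile{\LL[x,g^*]}{\PP_B}\neg\varphi(\check\alpha,\check x,\check{\vec\beta},\dot b)$, and let $h^*$ be $\PP_B$-generic over $\LL[x,g^*]$ with $q_0\in h^*$. By the density fact from the reduction we may further assume $p_0$ is a Burstin basis for $\RR\cap\LL[\res g\zeta]$ and $q_0$ one for $\RR\cap\LL[\res g{\xi'},\res{g^*}\zeta]$, for a common $\zeta\ge\xi'$; then, exactly as in Claim \ref{key}, any witnessing linear dependence among $p_0\cup q_0$ has its relevant piece in $\LL[\res g\zeta]\cap\LL[\res g{\xi'},\res{g^*}\zeta]=\LL[x]=\Span{p_{\mathrm{base}}}$ by mutual genericity, and is then annihilated using linear independence of $q_0\supseteq p_{\mathrm{base}}$ followed by that of $p_0\supseteq p_{\mathrm{base}}$, so $p_0\cup q_0$ is linearly independent. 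Finally, weak homogeneity together with $\sa_{\w_1\setminus\zeta}\cong\sa_{\w_1}$ and $\sa_{\w_1\setminus\xi'}\cong\sa_{\w_1}$ transfers the two forcing statements to $\LL[g,g^*]$, and Lemma \ref{5} produces $r\le_{\PP_B}p_0,q_0$ in $\PP_B^{\LL[g,g^*]}$; but then $r$ forces both $\varphi(\check\alpha,\check x,\check{\vec\beta},\dot b)$ and its negation, a contradiction. This proves the claim, hence the lemma.

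The step I expect to be the main obstacle is the compatibility of $p_0$ and $q_0$: it is essential that both conditions extracted from the two mutually generic extensions extend one and the same Burstin basis of $\RR\cap\LL[x]$, which is precisely why $x$ has to be chosen so that $h$ already contains such a basis --- this is the one place where the latitude in ``Turing-cofinally many $x$'' is genuinely used. Everything else is the same bookkeeping as in the proof of Lemma \ref{m2}: keeping track of which countable ordinals the various conditions and parameters depend on, and checking that each rearrangement that preserves $\sa_{\w_1}$-genericity is legitimate.
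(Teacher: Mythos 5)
Your proposal is correct and matches the paper's intent exactly: the paper gives no separate argument for this lemma beyond the remark that the proof of Lemma \ref{m2} already shows it, and your write-up is precisely that localization (dense choice of $x$ and $p_{\mathrm{base}}\in h$ a Burstin basis of $\RR\cap\LL[x]$, homogeneity of $\sa_{\w_1}$, mutual genericity, the Claim \ref{key} compatibility argument over the common base, and Lemma \ref{5} to merge the two conditions). In particular you correctly identify that the ``Turing-cofinally many $x$'' latitude is used exactly to ensure $h$ contains a Burstin basis of $\RR\cap\LL[x]$, which is the same point the paper's Claim \ref{key} handles with its ``without loss of generality'' assumption on the base condition.
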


By standard arguments, Lemma \ref{lemma_6.2} then implies.
\begin{lemma}\label{lemma_6.3}
Let $g$ be $\sa_{\w_1}$-generic over $\LL$, let $h$ be $\PP_B$-generic $h$ over $\LL[g]$ and let $b = \bigcup h$ be the Burstin basis added by $h$.  Let 
$W= \LL(\RR, b)^{\LL[g,h]}$.
Then \[{}^\omega W \cap \LL[g,h] \subset W.\] In particular, $W$ is a model of 
$\DC$, the principle of dependent choice.
\end{lemma}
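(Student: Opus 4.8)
The plan is to prove the stronger assertion ${}^\w W \cap \LL[g,h] \subseteq W$; once this is established, $\DC$ in $W$ is immediate, since a witnessing $\omega$-sequence for a total relation on a set of $W$ can be built inside $\LL[g,h]$ (which models $\AC$), and by the containment that sequence already lies in $W$. To prove the containment I would use that $W = \LL(\RR,b)$ carries a surjection $F\colon \RR \times \On \to W$ which is $\Sigma_1$-definable over $W$ from the parameters $\RR$ and $b$, exactly as in the proof of Lemma \ref{doesntwork}. Given $\langle a_n : n<\w\rangle \in \LL[g,h]$ with every $a_n \in W$, one picks in $\LL[g,h]$ pairs $(r_n,\xi_n) \in \RR \times \On$ with $F(r_n,\xi_n)=a_n$; then it suffices to show that both $\langle r_n : n<\w\rangle$ and $\langle \xi_n : n<\w\rangle$ belong to $W$, for then $n \mapsto a_n$ is definable over $W$ from these two sequences together with $\RR$ and $b$, and being a set it lies in $W$.

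The case of the real sequence is routine. By Corollary \ref{its-proper} and Lemma \ref{pb3} we have $\RR \cap \LL[g,h] = \RR \cap \LL[g]$, and by Lemma \ref{pb3} the forcing $\PP_B$ is $\w$-closed, so it adds no new $\w$-sequences of elements of $\LL[g]$; hence $\langle r_n : n<\w\rangle \in \LL[g]$, and it is coded by a single real $r \in \RR \cap \LL[g] = \RR \cap W \subseteq W$. This reduces everything to the claim ${}^\w\On \cap \LL[g,h] \subseteq W$.

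The crux is precisely this last claim. Fix $\sigma = \langle \xi_n : n<\w\rangle \in \LL[g,h]$; by $\w$-closure of $\PP_B$ again, $\sigma \in \LL[g]$, say $\sigma = \dot\sigma^g$ for a $\sa_{\w_1}$-name $\dot\sigma \in \LL$, which is $\mathrm{OD}$ in $\LL$ since $\LL \models V=\mathrm{HOD}$. The decisive structural fact is that \emph{every} condition of $\sa_{\w_1}$ has countable support: therefore, for each $n$ there is a condition $q_n \in g$, automatically of countable support, deciding $\dot\sigma(\check n)$. Put $S = \bigcup_{n<\w}\supp{q_n}$, a countable set of countable ordinals. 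Then $\res{g}{S}$ is $\sa_S$-generic over $\LL$, each $\xi_n$ is recovered inside $\LL[\res{g}{S}]$ as the unique ordinal forced as the value of $\dot\sigma(\check n)$ by some condition belonging to $\res{g}{S}$ (such a condition exists, namely $q_n$, and mutually compatible conditions force the same value), so $\sigma$ is definable over $\LL[\res{g}{S}]$ from $\dot\sigma$ and hence $\sigma \in \LL[\res{g}{S}]$. Finally $\res{g}{S}$ is coded by a single real $z \in \RR \cap \LL[g]$: from the countably many Sacks reals $\langle s_\alpha : \alpha \in S\rangle$ together with a real coding $S$ one reconstructs the filter $\res{g}{S}$, since a Sacks-generic filter is definable from its generic real. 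Thus $\LL[\res{g}{S}] = \LL[z]$ and $\sigma \in \LL[z] \subseteq W$, because $z \in \RR \cap W$ and $W$ models $\ZF$. Alternatively, having produced a real $z$ with $\sigma \in \mathrm{OD}_z^{\LL[g,h]}$, one may instead pass to a real $x \geq_T z$ having the absoluteness property of Lemma \ref{lemma_6.2}; then $\sigma \subseteq \LL[x]$ trivially and $\sigma \in \mathrm{OD}_{x,b}^{\LL[g,h]}$, so Lemma \ref{lemma_6.2} gives $\sigma \in \LL[x] \subseteq W$.

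Putting the three ingredients together yields ${}^\w W \cap \LL[g,h] \subseteq W$ and hence $W \models \DC$. I expect the main obstacle to be the third paragraph — showing that countable sequences of ordinals of $\LL[g,h]$ are captured by a single real of $\LL[g]$ — and it turns entirely on two features of the construction: the countable-support definition of the side-by-side product $\sa_{\w_1}$, which makes each ordinal value locally decided, and the $\w$-closure of $\PP_B$ from Lemma \ref{pb3}, which prevents the second forcing step from spoiling this; these are the same two features that underlie Lemma \ref{lemma_6.2} and its proof, which is why that lemma is what one cites here.
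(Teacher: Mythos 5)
Your proposal is correct in substance and is essentially the ``standard argument'' the paper has in mind: the paper offers no written proof of Lemma \ref{lemma_6.3} beyond the remark that it follows from Lemma \ref{lemma_6.2}, and your closing alternative (get $\sigma\in{\rm OD}_z^{\LL[g,h]}$ for a real $z$, pass to a Turing-larger $x$ with the property of Lemma \ref{lemma_6.2}, conclude $\sigma\in\LL[x]\subseteq W$) is exactly that argument, while your main route even bypasses Lemma \ref{lemma_6.2} by showing directly that $\sigma\in\LL[z]\subseteq W$. The reduction to sequences of reals and of ordinals via a $\Sigma_1$ surjection $F:\RR\times\On\to W$, the use of $\w$-closure of $\PP_B$ (Lemma \ref{pb3}) to pull these sequences into $\LL[g]$, and the use of countable supports to capture an ordinal sequence by a real are all as intended.

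One imprecision in your third paragraph should be repaired: the set $S=\bigcup_{n<\w}\supp{q_n}$ is countable in $\LL[g]$ but need not be an element of $\LL$, so the assertion ``$\res{g}{S}$ is $\sa_S$-generic over $\LL$'' does not literally parse, and the density argument recovering the filter from the Sacks reals needs its dense sets (which are defined from conditions of $\sa_S$) to lie in $\LL$ before genericity of $g$ over $\LL$ can be applied. The fix is immediate from what the paper already records (Corollary \ref{its-proper}, i.e.\ hypothesis (1) of Lemma \ref{pb3} holding in $\LL[g]$): choose $Y\in\LL$, countable in $\LL$, with $S\subseteq Y\subseteq\w_1$, and run the argument with $\sa_Y$ and $\res{g}{Y}$ in place of $\sa_S$ and $\res{g}{S}$. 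Then $Y$ and $\seq{s_\alpha}{\alpha\in Y}$ are coded by a single real $z\in\RR\cap\LL[g]$, the filter $\res{g}{Y}$ is recovered from $z$ exactly by the density dichotomy you invoke (for each coordinate either some restriction of the condition misses $[p(\alpha)]$, or the intersection of branch sets is perfect), and the recovery of $\sigma$ inside $\LL[z]$ from the $\LL$-forcing relation, together with $\LL[z]\subseteq W$, goes through unchanged. With that adjustment your proof is complete.
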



\begin{theorem}\label{summary}Let $g$ be $\sa_{\w_1}$-generic over $\LL$, and let $b$ be $\PP_B$ generic over $\LL[g]$.  Let 
\centm{$W= \LL(\RR, b)^{\LL[g,\,b]}$.}Then, $W\models \ZF+\DC$ and in $W$ there are Luzin, Sierpi\'{n}ski, Vitali and a Burstin basis  but in $W$ there is no a well-ordering of $\RR$.  \end{theorem}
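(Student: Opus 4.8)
The plan is to assemble the theorem from the lemmas already established; no new forcing argument is needed. Write $h$ for the $\PP_B$-generic filter, so that $b = \bigcup h$ and $\LL[g,b] = \LL[g,h]$, in agreement with the formulation of Lemmas \ref{m2} and \ref{lemma_6.3}. By Corollary \ref{its-proper} the forcing $\sa_{\w_1}$ is proper, and, as remarked just before Lemma \ref{pb3}, hypotheses (1) and (2) of that lemma hold in $V = \LL[g]$; hence, by Lemma \ref{pb3}, $\PP_B$ is $\w$-closed over $\LL[g]$ and adds no new reals. Consequently $\RR \cap \LL[g,b] = \RR \cap \LL[g]$, $\w_1$ is preserved throughout, and since $W = \LL(\RR, b)^{\LL[g,b]}$ is an inner model of $\LL[g,b]$ containing every real of $\LL[g,b]$, we get $\RR^W = \RR^{\LL[g,b]}$ and $\w_1^W = \w_1^{\LL[g,b]} = \w_1^{\LL}$; in particular $W$ and $\LL[g,b]$ have the same Borel codes and agree on which sets of reals are countable. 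Granting this, the three structural claims are immediate: $W \models \ZF$ because $W$ has the form $\LL(\RR,b)$ for a set $b$, and such classes always model $\ZF$; $W \models \DC$ is precisely Lemma \ref{lemma_6.3}; and ``there is no well-ordering of $\RR$'' holds in $W$ by Lemma \ref{m2}.

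Next I would check that the four special sets live in $W$. The Burstin basis is $b$ itself: as recorded at the start of the proof of Lemma \ref{m2} (using Lemmas \ref{extendability} and \ref{pb3}), $b$ is a Burstin basis in $W$; in particular $b$ is a Hamel basis there, so by Lemma \ref{vi} there is a Vitali set in $W$. For the Luzin and Sierpi\'{n}ski sets I would start in $\LL$: since $\LL \models \ZFC+\CH$, the classical Luzin--Sierpi\'{n}ski construction yields a Luzin set $\Lambda \in \LL$ and a Sierpi\'{n}ski set $S \in \LL$. By Corollary \ref{2}, $\sa_{\w_1}$ has the Sacks property, so by Corollary \ref{4} it preserves Luzin and Sierpi\'{n}ski sets; thus $\Lambda$ is Luzin and $S$ is Sierpi\'{n}ski in $\LL[g]$. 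Because $\PP_B$ adds no reals over $\LL[g]$ and preserves $\w_1$, every meager (resp.\ null) Borel code of $\LL[g,b]$ already belongs to $\LL[g]$, so the conditions $\card{\Lambda\cap M}\leq\w$ and $\card{S\cap N}\leq\w$ carry over from $\LL[g]$ to $\LL[g,b]$; hence $\Lambda$ and $S$ keep their properties in $\LL[g,b]$. Finally $\Lambda, S \in \LL \subseteq W$ are literally the same sets inside $W$, and since $W$ shares with $\LL[g,b]$ its reals, its Borel codes, and its $\w_1$, the assertions ``$\Lambda$ is a Luzin set'' and ``$S$ is a Sierpi\'{n}ski set'' transfer down to $W$. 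Together with the previous paragraph this establishes all parts of the statement.

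The only point requiring real care is this last transfer of Luzinness and Sierpi\'{n}skiness from $\LL[g,b]$ down to the inner model $W$: one must check that the property in question is genuinely absolute between the two models. I would justify this exactly as in the first paragraph --- $\PP_B$ adds no reals (so $\LL[g,b]$ and $\LL[g]$ agree on reals and on Borel codes), and $W$ is an inner model of $\LL[g,b]$ with $\RR^W = \RR^{\LL[g,b]}$ and $\w_1^W = \w_1^{\LL[g,b]}$ (so ``countable'' means the same for subsets of $\RR$ in $W$ as in $\LL[g,b]$). Everything else is routine bookkeeping with the cited lemmas.
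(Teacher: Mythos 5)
Your proposal is correct and follows essentially the same route as the paper: the theorem is assembled from Lemma \ref{lemma_6.3} (for $\ZF+\DC$), the $\w$-closedness of $\PP_B$ via Lemma \ref{pb3} (so $\RR\cap W=\RR\cap\LL[g]$ and $b$ remains a Burstin basis in $W$), Lemma \ref{vi} for the Vitali set, Corollaries \ref{2} and \ref{4} for the Luzin and Sierpi\'nski sets, and Lemma \ref{m2} for the non-existence of a well-ordering. Your extra care in transferring Luzinness and Sierpi\'nskiness through the $\PP_B$ step and down to the inner model $W$ just makes explicit what the paper leaves implicit when it cites Corollary \ref{4}.
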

\begin{proof}Clearly Lemma \ref{lemma_6.3} gives $W\models \ZF+\DC$.  Now, as $\PP_B$ is $\w$-closed, $\RR\cap W=\RR\cap \LL[g]$, so that $W\models \text{``$b$ is a Burstin basis"}$.  This means that in $W$, we have a Bernstein set and a Hamel basis.   Hence, in view of \ref{vi}, there is a Vitali in $W$ set induced by $b$.  By Corollary \ref{4}, $W$ has a  Luzin as well as a Sierpi\'{n}ski set.    Finally, by \ref{m2}, in $W$ there is no a well-ordering of the reals, as required.\end{proof}

\section{Further remarks: ultrafilters on $\omega$, Mazurkiewicz sets, etc.}

Let $g$ be $\sa_{\omega_1}$-generic over $L$. 

By \cite[Theorem 6]{laver}, in $L[g]$
there is an ultrafilter on $\omega$ which is generated by an ultrafilter in $L$. In fact,
if $U \in L$ is a selective ultrafilter on $\omega$, then $U$ generates an ultrafilter in
$L[g]$
(see \cite{olga}). This implies that the model $W = \LL(\RR, b)^{\LL[g,\,b]}$
from Theorem \ref{summary} has ultrafilters on $\omega$. 

A set $M\subseteq \RR^2$ is a \emph{Mazurkiewicz}\index{special sets!Mazurkiewicz} set if $M$ intersects every straight line in exactly two points. Masurkiewicz showed in
$\ZFC$ that Mazurkiewicz sets exist, see \cite{mazur}.
We may force with a poset ${\mathbb P}_M$ consisting of
``local'' Mazurkiewicz sets over $L[g]$ in much the same way as Definition
\ref{defn_BF} gave a forcing whose conditions
are ``local'' Burstin bases. If $m$ is the set added by ${\mathbb P}_M$,
then $m$ will be a Mazurkiewicz set in $\LL(\RR, m)^{\LL[g,\,m]}$ and this
model will not have a well-ordering of the reals. This result is produced in \cite{maz}.

We may in fact force with the product ${\mathbb P}_B \times {\mathbb P}_M$ over
$L[g]$ and get a model with a Burstin base and a Mazurkiewicz set with no well-order of the reals.

In the same fashion, one may add further ``maximal independent'' sets generically over
$L[g]$, e.g.\ selectors for $\Sigma^1_2$ definable equivalence relations, without
adding a well-ordering of ${\mathbb R}$. (Cf.\ \cite{budinas}.)

\bibliography{specialsets}

\begin{thebibliography}{10}

\bibitem{maz}
{\sc Beriashvili, M., and Schindler, R.}
\newblock Mazurkiewicz sets.
\newblock Available at
  \url{https://ivv5hpp.uni-muenster.de/u/rds/mazurkiewicz_sets.pdf}, 2017.

\bibitem{hamel2}
{\sc Beriashvili, M., Schindler, R., Wu, L., and Yu, L.}
\newblock Hamel bases and well-ordering of the continuum.
\newblock Accepted in Proc. Amer. Math. Soc.
\newblock Available at
  \url{https://ivv5hpp.uni-muenster.de/u/rds/hamel_basis_new_version.pdf}.

\bibitem{bern}
{\sc Bernstein, F.}
\newblock Zur {T}heorie der {T}rigonoetrischen {R}eiche.
\newblock {\em Berichte \"{u}ber die Verhandlungen der K\"{o}niglich
  S\"{a}chsischen Gesellschaft der Wissenchaften zu Leipzig
  Mathematisch-Physiche Klasse 60\/} (1908), 325--338.

\bibitem{breg}
{\sc Brendle, J.}
\newblock Generic constructions of small sets of reals.
\newblock {\em Top. and its Appl. 71\/} (1996), 125--147.

\bibitem{budinas}
{\sc Budinas, B.}
\newblock The selector principle for analytic equivalence relations does not
  imply the existence of an \textrm{A}${}_2$ well ordering of the continuum.
\newblock {\em Math. USSR Sbornik 120 (162)\/} (1983), 159--172.

\bibitem{burstin}
{\sc Burstin, C.}
\newblock Die \textrm{S}paltung des \textrm{K}ontinuums in $\mathfrak{c}$ in
  \textrm{L}. \textrm{S}inne nichtme{\ss}bare \textrm{M}engen.
\newblock {\em Sitzungber. Kaiserlichen Akad. Wiss. Math.-Natur. Kl. Abteilung
  IIa}, 125 (1916), 2019--217.

\bibitem{kc}
{\sc Ciesielski, K.}
\newblock {\em Set theory for the Working {M}athematician}.
\newblock London Mathematical Society Student Texts. Cambridge University
  Press, 1997.

\bibitem{gesc}
{\sc Geschke, S., and Quickert, S.}
\newblock On {S}acks {F}orcing and the {S}acks {P}roperty.
\newblock In {\em Foundations of the Formal Sciences II: Applications of
  Mathematical Logic in Philosophy and Linguistics}, B.~L{\"o}we, W.~Malzkorn,
  and T.~R\"{a}sch, Eds., vol.~23 of {\em Trends Log. Stud. Log. Libr.} Kluwer
  Acad. Publ., Dordrecht, 2004, pp.~1--49.

\bibitem{goldstern}
{\sc Goldstern, M.}
\newblock Tools for your forcing construction.
\newblock In {\em Set theory of the Reals\/} (Israel, 1993), vol.~6 of {\em
  Israel Mathematical Conference Proceedings}, Bar-Ilan University,
  pp.~305--360.

\bibitem{groszek}
{\sc Groszek, M., and Slaman, T.}
\newblock A basis theorem for perfect sets.
\newblock {\em Bull. Symb. Logic 4}, 2 (1998), 204--209.

\bibitem{jones}
{\sc Jones, F.}
\newblock Measure and other properties of a hamel basis.
\newblock {\em Bull. Amer. Math. Soc.}, 6 (1942).

\bibitem{kanovei-sacks}
{\sc Kanovei, V.}
\newblock On non-wellfounded iterations of the perfect set forcing.
\newblock {\em J. Symbolic Logic 64 (2)\/} (1999), 551--574.

\bibitem{larson_zapletal}
{\sc Larson, P., and Zapletal, J.}
\newblock Canonical models for fragments of the axiom of choice.
\newblock {\em J.Symbolic Logic 82 (2)\/} (2017), 489--509.

\bibitem{laver}
{\sc Laver, R.}
\newblock Products of infinitely many perfect trees.
\newblock {\em J. London Mathematical Society s2-29\/} (1984), 385--396.

\bibitem{luz}
{\sc Luzin, N.}
\newblock Sur un probl\`{e}me de {M}. {B}aire.
\newblock {\em C.R. Hebdomadaires Seances Acad. Sci. Paris 158\/} (1914),
  1258--1261.

\bibitem{msurr}
{\sc Mathias, A. R.~D.}
\newblock Surrealist landscape with figures (a survey of recent results in set
  theory).
\newblock {\em Per. Math. Hung. 10}, 2-3 (1979), 109--175.

\bibitem{mazur}
{\sc Mazurkiewicz, S.}
\newblock Sur un ensemble plan (in polish).
\newblock In {\em Comptes Rendus Sci. et Lettres de Varsovie}, vol.~7 of {\em
  {Travaux de topologie et ses Applications} 46:7}. Polish Scientific
  Publishers PWN, 1914, pp.~382--383.

\bibitem{misp}
{\sc Miller, A.~W.}
\newblock Special subsets of the real line.
\newblock In {\em Handbook of Set theoretic Topology}, K.~Kunen and J.~Vaughan,
  Eds. North-Holland, 1984, pp.~201--233.

\bibitem{pinc}
{\sc Pincus, D., and Prikry, K.}
\newblock Luzin sets and well ordering of the continuum.
\newblock {\em Proc. Amer. Math. Soc. 49\/} (1975), 429--435.

\bibitem{carlos_stevo}
{\sc Prisco, C.~D., and Todorcevic, S.}
\newblock Perfect-set properties in \textrm{L}(\textrm{R})[\textrm{U}].
\newblock {\em Advances in Mathematics 139\/} (1998), 240--259.

\bibitem{book}
{\sc Schindler, R.}
\newblock {\em Set theory. Exploring independence and truth}.
\newblock Universitext. Springer-Verlag, 2014.

\bibitem{hamel1}
{\sc Schindler, R., Wu, L., and Yu, L.}
\newblock Hamel bases and the principle of dependent choice.
\newblock Available at
  \url{https://ivv5hpp.uni-muenster.de/u/rds/hamel_basis_2.pdf}.

\bibitem{sier}
{\sc Sierpi\'{n}ski, W.}
\newblock Sur l'hypoth\`{e}se du continu ($2^{\aleph_0}=\aleph_1$).
\newblock {\em Fund. Mat. 5}, 177-187 (1924).

\bibitem{vewo}
{\sc Velickovic, B., and Woodin, H.}
\newblock Complexity of reals in inner models of set theory.
\newblock {\em Ann. Pure Appl. Logic 92\/} (1998), 283--295.

\bibitem{olga}
{\sc Yiparaki, O.}
\newblock On some tree partitions.
\newblock {\em PhD thesis, University of Michigan in Ann Arbor\/} (1994).

\end{thebibliography}
  
\end{document}